\newtheorem*{rep@theorem}{\rep@title}
\newcommand{\newreptheorem}[2]{%
\newenvironment{rep#1}[1]{%
\def\rep@title{#2 \ref{##1}}%
\begin{rep@theorem}}%
{\end{rep@theorem}}}
 \newtheorem{theorem}{Theorem}[section]
 \newtheorem{proposition}[theorem]{Proposition}
 \newtheorem{corollary}[theorem]{Corollary}
 \newtheorem{lemma}[theorem]{Lemma}
 \theoremstyle{definition}
 \newtheorem{definition}[theorem]{Definition}
  \newtheorem{remark}[theorem]{Remark}
\definecolor{darkblue}{rgb}{0.0, 0.0, 0.8}
\definecolor{darkred}{rgb}{0.8, 0.0, 0.0}
\definecolor{darkgreen}{rgb}{0.0, 0.8, 0.0}
\definecolor{ncolor}{rgb}{0.8, 0.8, 0.0}
\newcommand{\class}{\mathcal{C}}
\newcommand{\fmm}[1]{\mathcal{#1}}
\newcommand{\mm}[1]{\mathscr{#1}}
\newcommand{\dgh}{d_{GH}}
\newcommand{\dghb}{d_{GH}^B}
\newcommand{\dha}{d_{H}}
\newcommand{\Z}{\mathbb{Z}}
\newcommand{\real}{\mathbb{R}}
\newcommand{\R}{\real}
\newcommand{\lip}{\mathcal{L}}
\newcommand{\dwa}[1]{d_{W,{#1}}}
\newcommand{\dwp}{d_{W,p}}
\newcommand{\dwi}{d_{W,\infty}}
\newcommand{\dgw}[1]{d_{GW,{#1}}}
\newcommand{\dgwp}{d_{{GW,p}}}
\newcommand{\dgwi}{\ensuremath{d_{GW,\infty}}}
\newcommand{\dto}{\ensuremath{\Rightarrow}}
\newcommand{\calX}{\mathcal{X}}
\newcommand{\calY}{\mathcal{Y}}
\newcommand{\calZ}{\mathcal{Z}}
\newcommand{\calE}{\mathcal{E}}
\newcommand{\calF}{\mathcal{F}}
\newcommand{\calR}{\mathcal{R}}
\newcommand{\calRB}{\mathcal{R}_B}
\newcommand{\calRBn}{\mathcal{R}^n_B}
\newcommand{\calDX}{\mathcal{D}_\mathcal{X}}
\newcommand{\calDY}{\mathcal{D}_\mathcal{Y}}
\newcommand{\calDXn}{\mathcal{D}_\mathcal{X}^n}
\newcommand{\calDYn}{\mathcal{D}_\mathcal{Y}^n}
\newcommand{\calFX}{\mathcal{F}_{\mathcal{X}}}
\newcommand{\calFY}{\mathcal{F}_{\mathcal{Y}}}
\newcommand{\calFXn}{\mathcal{F}_{\mathcal{X}}^n}
\newcommand{\calFYn}{\mathcal{F}_{\mathcal{Y}}^n}
\newcommand{\muX}{\mu_X}
\newcommand{\muY}{\mu_Y}
\newcommand{\psiX}{\psi_X}
\newcommand{\psiY}{\psi_Y}
\newcommand{\iX}{\iota_X}
\newcommand{\iY}{\iota_Y}
\newcommand{\piX}{\pi_X}
\newcommand{\piY}{\pi_Y}
\newcommand{\piZ}{\pi_Z}
\newcommand{\dX}{d_X}
\newcommand{\dY}{d_Y}
\newcommand{\dZ}{d_Z}
\newcommand{\dB}{d_B}
\newcommand{\mXY}{m_{X,Y}}
\newcommand{\cXY}{C(\muX,\muY)}
\newcommand{\dXY}{d_{X,Y}}
\newcommand{\dP}{dP}
\newcommand{\dPP}{d(P \otimes P)}
\newcommand{\dPn}{dP_n}
\newcommand{\dPPn}{d(P_n \otimes P_n)}
\newcommand{\sP}{\mathrm{supp}P}
\newcommand{\sPP}{\mathrm{supp}(P \otimes P)}
\newcommand{\costpPm}{\bigg( \int \mXY^p \dPP \bigg)^{1/p}}
\newcommand{\costpPd}{\bigg( \int \dXY^p \dP \bigg)^{1/p}}
\newcommand{\costiPm}{\sup_{\sPP}  \mXY}
\newcommand{\costiPd}{\sup_{\sP}\, \dXY}
\newcommand{\costiP}{\max \left\{ \frac{1}{2} \costiPm,
\, \costiPd  \right\}}
\newcommand{\costpP}{ \max \bigg\{ \frac{1}{2} \costpPm, \costpPd \bigg\}}
\newcommand{\unxpe}{U_{X,p}^{n,\epsilon}}
\newcommand{\unype}{U_{Y,p}^{n,\epsilon}}
\newcommand{\unxqe}{U_{X,q}^{n,\epsilon}}
\newcommand{\unyqe}{U_{Y,q}^{n,\epsilon}}
\newcommand{\cnxqe}{C_{X,q}^{n,\epsilon}}
\newcommand{\cnyqe}{C_{Y,q}^{n,\epsilon}}
\newcommand{\cat}[1]{\text{#1}}
\newcommand{\urysohnB}{\mathscr{U}_B}
\newcommand{\lawsB}{\mathcal{L}_B}
\begin{document}

\title{Universal Mappings and Analysis of Functional Data on Geometric Domains}
\author[1]{Soheil Anbouhi}
\author[2]{Washington Mio}
\author[3]{Osman Berat Okutan}
\affil[1]{Department of Mathematics and Statistics, Haverford College\\ \texttt{sanbouhi@haverford.edu}}
\affil[2]{Department of Mathematics, Florida Sate University\\ \texttt{wmio@fsu.edu}}
\affil[3]{Max Planck Institute for Mathematics in the Sciences\\ \texttt{okutan@mis.mpg.de}}
%\author{Soheil Anbouhi, Washington Mio, Osman Berat Okutan}
%\affil[2]{Department of Mathematics \\
%    Florida State University \\
%    Tallahassee, FL 32306-4510 USA}
\date{ }

\maketitle
\begin{abstract}
This paper employs techniques from metric geometry and optimal transport theory to address questions related to the analysis of functional data on metric or metric-measure spaces, which we refer to as fields. Formally, fields are viewed as 1-Lipschitz mappings between Polish spaces, with the domain possibly equipped with a probability measure. We establish the existence and uniqueness, up to isometry, of Urysohn fields; that is, universal and homogeneous elements for this class. We prove a characterization theorem for Urysohn fields and show how they relate to a notion of Gromov-Hausdorff distance for fields. For metric-measure domains, we introduce a field analogue of the Gromov-Wasserstein distance and investigate its properties. Adapting the notion of distance matrices to fields, we formulate a discrete model and obtain an empirical estimation result that provides a theoretical basis for its use in functional data analysis. We also prove an analogue of Gromov's Reconstruction Theorem in this realm.
\end{abstract}

\bigskip

{\em Keywords:} universal spaces, functional data, optimal transport.

\medskip
{\em 2020 Mathematics Subject Clsssification:} 51F30, 60B05, 60B10.

\section{Introduction}\label{sec:intro}

This paper addresses problems in {\em functional metric geometry} that arise in the study of data such as signals recorded on geometric domains or the nodes of networks. Formally, these may be viewed as functions defined on metric spaces, sometimes equipped with additional structure such as a probability measure, in which case the domain is referred to as a {\em metric-measure space}, or simply $mm$-space. Datasets comprising such objects arise in many domains of scientific and practical interest. Metric spaces underlying functional data are typically finite, so we discuss some motivating examples of this nature. On a social network, a probability distribution $\mu$ on the set of nodes $V$ may be used to describe how influential the various members of the network are. The edges normally represent some form of direct interaction, but a metric $d$ on $V$, such as the shortest-path distance, the diffusion distance, or the commute-time distance \cite{coifman, martinez2019probing}, is useful in quantifying indirect interactions as well. The triple $(V,d,\mu)$ defines an $mm$-space. Attributes such as individual preferences, traits or characteristics may be viewed as a function $f \colon V \to B$, where $B$ is a metric space such as $\real^n$ for vector-valued attributes, or $\mathbb{Z}_2^n$ (binary words of length $n$) equipped with the $\ell_1$-norm for discrete attributes such as a like-or-dislike compilation of preferences. The quadruple $(V,d,\mu,f)$ is a functional $mm$-space that can be employed for data representation in many different scenarios. Social networks are dynamic, with individuals joining and leaving the network, their relevance changing over time, as well as their attributes \cite{tantipathananandh2007framework, sekara2016fundamental,doreian2013evolution}. This leads to a family of functional $mm$-space $(V_t, d_t, \mu_t, f_t)$ parametrized by time. To analyze, visualize and summarize structural and functional changes over time, it is important to define metrics that are sensitive to such changes and amenable to computation. One of the objectives of this paper is to develop representations and metrics for functional $mm$-spaces that brings us closer to this goal.

The study of the organization and shape of point clouds presented as finite metric spaces $(X, d_X)$ is a theme of central interest in geometric data analysis. The functional $mm$-spaces investigated in this paper can provide a useful tool in summarizing the organization of sub-communities of a point cloud and quantifying structural variation across point clouds. If the distance $d_X (x,y)$ is interpreted as a measure of interaction between $x,y \in X$, then tightly structured sub-communities of $X$ at a scale $r\geq 0$ can be modeled as non-empty subsets $\sigma \subseteq X$ such that $d_X (x,y) \leq r$, $\forall x,y \in \sigma$, which are the simplices of the Vietoris-Rips complex $V\!R (X,r)$. If $\tau$ is a face of $\sigma$, then the sub-community represented by $\tau$ is fully embedded in the sub-community represented by $\sigma$, so one may wish to only consider the sub-collection $K(X,r) \subseteq V\!R (X,r)$ formed by the maximal simplices of $V\!R (X,r)$; that is, those that are not faces of any other simplex. We refer to $K(X,r)$ as the {\em community hypergraph} of $X$ at scale $r\geq 0$. At scale $r=0$, maximal communities are the singletons $\{x\}$, $x \in X$, but they can be simplices of higher dimension if $d_X$ is a pseudo-metric. If $r_0 = \text{diam}\, X$, then $K(X,r_0)$ contains a single simplex $\sigma = X$ because $X$ itself represents a community at that scale. One can metrize $V\!R (X,r)$, for example, by defining the distance between two simplices $\sigma$ and $\tau$ to be the Hausdorff distance $d_H (\sigma, \tau)$ in $X$. Thus, $K(X,r)$ may be viewed as a (finite) metric space with the metric inherited from $V\!R (X,r)$. Letting $|\sigma|$ be the number of members in the sub-community represented by the simplex $\sigma$, we define a probability measure $\mu$ on $K(X,r)$ by $\mu(\sigma) = |\sigma|/N$, where
\[
N = \sum_{\sigma \in K(X,r)} |\sigma|
\]
is a normalizing constant. This distribution reflects how representative of $X$ each sub-community is. In addition, one may use centrality functions to quantify more explicitly the importance of each simplex $\sigma \in K(X,r)$. For $p\geq 1$, define the $p$-centrality function $\lambda_p \colon K(X,r) \to \real$ by
\[
\lambda_p (\sigma) := \left( \int_{K(X,r)} d_H^p (\sigma, \tau) d\mu(\tau)\right)^{1/p} = \left( \frac{1}{N} \sum_{\tau  \in K(X,r)} (d_H^p (\sigma,\tau)) |\tau| \right)^{1/p}
\]
(cf.\,\cite{hang2019topological}). The more central a sub-community $\sigma$ is, the lower the value of $\lambda_p (\sigma)$. Thus, we can approach the study of sub-communities of $X$ at scale $r$ via the community hypergraph viewed as a functional $mm$-space $(K(X,r), d_H, \mu, \lambda_p)$. For a family $\{(X_i, d_i) \colon 1 \leq i \leq n\}$ of point clouds, we can study variation in community organization via the associated family $\{(K(X_i,r), d_{H,i}, \mu_i, \lambda_{p,i}) \colon 1 \leq i \leq n\}$ of functional $mm$-spaces, potentially enhancing the analysis through the use of more sensitive metrics such as the ones developed in this paper.

Motivated by problems such as those outlined above, our primary goal is threefold: (i) to develop metrics that allow us to model and quantify variation in functional data, possibly with distinct domains; (ii) to investigate principled empirical estimations of these metrics; (iii) to construct a universal function that ``contains'' all functions whose domains and ranges are Polish (separable and complete metric) spaces, assuming Lipschitz regularity. The latter is much in the spirit of constructing universal spaces for structural data (metric spaces) whose investigation dates back to the early 20th century and are of classical interest in metric geometry \cite{urysohn1927espace, vershik2004random,katetov1986universal}.

Modeling structural data as geometric objects has been a subject of extensive study using techniques from areas such as metric geometry and optimal transport theory. Shape contrasts or similarities in a family of compact metric spaces can be quantified via metrics such as the Gromov-Hausdorff distance $d_{GH}$ \cite{burago2001course}. For $mm$-spaces, analogues of $d_{GH}$ such as the Gromov-Wasserstein distance \cite{memoli2011gromov, sturm2006geometry}, that highlight regions of larger probability mass, are also used for shape analysis. We study functional analogues of the Gromov-Hausdorff and Gromov-Wasserstein distances (cf.\,\cite{vayer2020fused}) and also investigate discrete representations of functional $mm$-spaces based on random finite subsets by means of distance-matrix representations that encode information about their shape.

Another important angle in the investigation of a shape class is the study of existence and uniqueness of a universal element. Given a class of metric spaces $\class$, a space $U$ is $\class$-universal if it contains an isometric image of each $X$ in $\class$. Fr\'echet showed that any Polish space can be isometrically embedded in the space $l^{\infty}$ of all bounded real sequences (with the sup norm) \cite{frechet1910dimensions}. Banach and Mazur showed that the same holds for the space $C[0,1]$ of real-valued continuous functions on the interval $[0,1]$ (with the sup norm) \cite{banach1987theory}. However, these spaces are external to the class of Polish spaces, as they are not separable themselves. A {\em Urysohn universal space} (or simply, Urysohn space) is a Polish space $U$ that is universal for Polish spaces and satisfies an additional homogeneity condition: any isometry between finite subsets of $U$ extends to an isometry of $U$ onto itself. Unlike more general universal spaces, Urysohn spaces are known to be unique up to isometry. For a more thorough discussion of a construction and properties of Urysohn spaces, we refer the reader to \cite{heinonen2003geometric} (see also \cite{katetov1986universal, vershik2004random}). A $\class$-universal space provides a framework for the study of the class $\class$. For example, the collection of compact subsets of the Urysohn space $U$ with the Hausdorff distance, modulo the action of the isometry group of $U$, is isometric to the space of isometry classes of compact metric spaces equipped with the Gromov-Hausdorff distance \cite{gromov2007metric}. This paper addresses functional counterparts of these problems for 1-Lipschitz mappings between Polish spaces, including the construction of Urysohn universal mappings that satisfy a homogeneity condition akin to the homogeneity of Urysohn spaces.

\subsection{Main Results}

We study the class of 1-Lipschitz maps $\phi \colon X \to B$, where $X$ and $B$ are Polish spaces. These mappings are the morphisms in the category $\cat{Met}$ of metric spaces restricted to Polish spaces. We refer to such 1-Lipschitz mappings as {\em $B$-valued fields} on $X$, or simply $B$-fields, and sometimes denote them as triples $\mm{X}=(X, B, \pi)$.

\medskip 
\noindent
{\bf Universal Urysohn Fields.} We first fix the target space $B$ and show that there exists a unique (up to isometry) {\em Urysohn universal $B$-field} $\pi \colon U \to B$ for the class of $B$-fields over Polish spaces. In Theorem \ref{thm:urysohncharacterization}, we prove that the following properties characterize a Urysohn $B$-field $\pi \colon U \to B$:
\begin{enumerate}[(i)]
\item for any $B$-field $\phi \colon X \to B$, there is an isometric embedding $\imath \colon X \to U$ such that $\pi \circ \imath = \phi$;
\item any (bijective) isometry $\phi \colon A \to A'$ between finite subsets of $U$ such that $\pi \circ \phi = \pi|_A$ can be extended to an automorphism of $\pi \colon U \to B$; that is, to a bijective isometry $\phi' \colon U \to U$ such that $\pi \circ \phi' = \pi$.
\end{enumerate}
Property (i) is referred to as universality, whereas property (ii) is a strong form of homogeneity. 

\smallskip

Subsequently, we introduce the notion of {\em ultimate Urysohn field} to address universality and homogeneity for the full class of 1-Lipschitz maps between Polish spaces; that is, allowing the target space $B$ to also vary. We prove an existence and uniqueness theorem for ultimate Urysohn fields, which are defined in terms of universality and homogeneity, but are characterized in Theorem \ref{thm:ultimateUrysohnCharacterization}, as follows. A $B$-field $\pi \colon U \to B$ is an ultimate Urysohn field if and only if it satisfies:
\begin{enumerate}[(i)]
\item $\pi$ is Urysohn universal for $B$;
\item $B$ is a Urysohn space. 
\end{enumerate}

As alluded to above, for compact metric spaces, a Urysohn space $U$ allows us to interpret the Gromov-Hausdorff distance as a Hausdorff distance in $U$ modulo the action of isometries. We prove an analogous result for compact fields; that is, fields with compact domains. To this end, in Section \ref{sec:mfields}, we introduce the notion of Gromov-Hausdorff distance between general compact fields.

Denote by $F (\mm{U})$ the space of compact subfields of a field $\mm{U}$ equipped with the Hausdorff distance in $\mm{U}$; that is, the Hausdorff distance between the domains of the compact subfields. Let $Aut (\mm{U})$ be the group of automorphisms of $\mm{U}$, and $d_F^\mm{U}$ the quotient metric induced by the Hausdorff distance on the quotient $F(\mm{U})/Aut(\mm{U})$.
%--------------------
\begin{reptheorem}{thm;fgh}
Let $\mm{U}$ be an ultimate Urysohn field. The moduli space $(\fmm{F}, \dgh)$ of isomorphism classes of compact fields is isometric to the quotient space $(F(\mm{U})/Aut(\mm{U}), d_F^\mm{U})$.
\end{reptheorem}

\medskip
\noindent
{\bf Metric-Measure Fields.} We study metric-measure fields ($mm$-fields), that is, 1-Lipschitz functions $f \colon X \to B$ between Polish spaces, where $X$ is also equipped with a Borel probability measure $\mu$. These are denoted as quadruples $\fmm{X}=(X, B,\pi, \mu)$. We develop a field analogue of the Gromov-Wasserstein distance that has been studied extensively for $mm$-spaces \cite{memoli2011gromov, sturm2006geometry}. For fields $\mathcal{X}$ and $\mathcal{Y}$, the Gromov-Wasserstein distance is denoted $\dgw{p}(\mathcal{X},\mathcal{Y})$, where $1 \leq p \leq \infty$ is a parameter, and is used to address convergence questions. Among other things, we show that there are equidistributed sequences $(x_n)$ and $(y_n)$ in $X$ and $Y$, respectively, such that
\[
\dgwi(\calX,\calY)= \max \left\{ \frac{1}{2} \sup_{i,j} \mXY(x_i,y_i,x_j,y_j),
    \, \sup_i \dXY(x_i,y_i) \right\}.
\]
This is further detailed in Theorem \ref{thm:gwuniform}. (A sequence $(x_i)$ in $X$ is $\mu$-equidistributed if the empirical measures $\sum_{i=1}^n \delta_{x_i}/n$ converge weakly to $\mu$.) With an eye toward principled empirical estimation of the distance between $mm$-fields, we introduce the notion of {\em extended distance matrices}, much in the way distance matrices are used to study $mm$-spaces (cf.\,\cite{gromov2007metric}). For an $mm$-field $\fmm{X}=(X, B,\pi, \mu)$ and a sequence $\{x_i\}$ in $X$, $i \geq 1$, form the countably infinite (pseudo) distance matrix $R = (r_{ij}) \in \real^{\mathbb{N} \times \mathbb{N}}$ and the infinite sequence $b = (b_i) \in B^{\mathbb{N}}$, where $r_{ij} = d_X (x_i, x_j)$ and $b_i = \pi(x_i) \in B$. We refer to the pair $(R, b)$ as the {\em augmented distance matrix} of $\fmm{X}$ associated with the sequence, which records the shape of the graph of $\pi$ restricted to the sequence. This construction let us define a Borel measurable mapping $F_{\fmm{X}} \colon X^\infty \to \real^{\mathbb{N} \times \mathbb{N}} \times B^\infty$, with both the domain and co-domain equipped with the weak topology. The pushforward of $\mu^\infty$ under $F_{\fmm{X}}$ yields a probability measure on $\real^{\mathbb{N} \times \mathbb{N}} \times B^\infty$ that we denote by
\begin{equation}
\fmm{D}_{\fmm{X}}:= (F_{\fmm{X}})_\ast (\mu^\infty) \,,
\end{equation}
and refer to as the {\em field curvature distribution} of $\fmm{X}$. This terminology is motivated by the notion of {\em curvature set} of a metric space \cite{gromov2007metric}. A similar construction for finite sequences $\{x_i\}$, $1 \leq i \leq n$, gives a measure $\fmm{D}^n_{\fmm{X}}$ on $\real^{n \times n} \times B^n$.

We prove a field version of Gromov's Reconstruction Theorem for metric-measure spaces that states that the field curvature distribution $\fmm{D}_{\fmm{X}}$ gives a faithful representation of $\fmm{X}$.

\begin{reptheorem}{thm:reconstruction}[Field Reconstruction Theorem]
Let $\fmm{X} = (X,d_X,\pi_X, \mu_X)$ and $\fmm{Y} = (Y,d_Y, \pi_Y, \mu_Y)$ be $mm$-fields over $B$ such that $\mu_X$ and $\mu_Y$ are fully supported. Then,
\[
\calX \simeq \calY \text{ if and only if } \calDX=\calDY.
\]
\end{reptheorem}
\noindent
Here, $\fmm{X}\simeq\fmm{Y}$ means that there exists a measure-preserving isometry between $\fmm{X}$ and $\fmm{Y}$.

Our main result supporting empirical estimation of the Gromov-Wasserstein distance between $mm$-fields is the following convergence theorem.
%---------------------
\begin{reptheorem}{thm: dWp(Dx,Dy)=dGW(infty)}
Let $\calX$ and $\calY$ be bounded $mm$-fields over $B$. Then, for any $1 \leq p \leq \infty$, we have
\begin{equation*}
    \lim_{n \to \infty} \dwp(\calDXn,\calDYn) = \dwp(\mathcal{D}_\mathcal{X},\mathcal{D}_\mathcal{Y}) = \dgwi(\mathcal{X},\mathcal{Y}) .
\end{equation*}
\end{reptheorem}

%---------------------------------

\subsection{Additional Remarks on Urysohn Space}
In \cite{mrowka1953solution}, Mro\'wka gave an example of an isometry between countable subsets of Urysohn space $U$ that cannot be extended to an isometry of $U$, whereas Huhunai{\v s}vili showed that isometries between compact subsets of $U$ can be extended to $U$. Additionally, in \cite{melleray2007geometry}, Melleray argued that compact homogeneity is the strongest general homogeneity property possible for Urysohn space. As it turns out, a metric space is Urysohn if and only if it has the following one-point-extension property for finite sets: given any finite subset $X \subseteq U$ and one-point metric extension $(X^\ast, d_X^\ast)$ of $X$, there is an isometric embedding of $X^\ast$ in $U$ that restricts to the inclusion map on $X$. 

A well-known construction of Urysohn space is due to Kat{\v e}tov \cite{katetov1986universal}. Starting with an arbitrary separable metric space $X_0$ (e.g., a one-point space), a sequence of separable metric extensions $\{X_i \colon i\geq 0\}$ is constructed inductively, as follows. For $i \geq 0$, $X_{i+1}$ is the space of all pseudo-metric one-point extensions of $X_i$. The space $X_i$ can be isometrically embedded into $X_{i+1}$ by mapping $x \in X_i$ to the one-point extension of $X_i$ that simply repeats the point $x$. The colimit $X_\infty := \varinjlim X_i$ is separable and its completion is a Urysohn space.

Vershik approached Urysohn space by means of (countably) infinite distance matrices, which may be thought of as metrics on the set $\mathbb{N}$ of natural numbers \cite{vershik2004random}. Distance matrices satisfying an {\em approximate one-point extension property} lead to Urysohn space by taking the completion of $\mathbb{N}$. Moreover, Vershik showed that such universal matrices are (topologically) generic and that a random matrix is universal in a probabilistic sense. Yet another approach uses somewhat different methodology from descriptive set theory and model theory to construct Urysohn space \cite{yaacov2015fraisse, kechris2005fraisse} and to address universality for spaces equipped with additional structures \cite{doucha2013universal}. 

\subsection{Organization}

Section \ref{sec:prelim} sets up some notation and terminology, recalls a few basic concepts and results from metric geometry and introduces their counterparts for fields defined on metric spaces. Section \ref{sec:urysohn} addresses existence, uniqueness and characterization of Urysohn fields. The notion of Gromov-Hausdorff distance between compact fields is introduced in Section \ref{sec:mfields} and we show how it relates to ultimate Urysohn fields. Metric-measure fields and various notions of distance between them are discussed in Section \ref{sec:mmfields}. Section \ref{sec:dmatrix} introduces a representation of $mm$-fields by distributions of infinite augmented distance matrices, proves a field reconstruction theorem based on these distributions and also addresses empirical approximation questions. Section \ref{sec:summary} closes the paper with a summary and some discussion. Some of the proofs are deferred to an appendix.

%--------------------------------

\section{Preliminaries} \label{sec:prelim}

In this section, we introduce some notation and review basic concepts and results from metric geometry 
(cf.\,\cite{burago2001course} and \cite{heinonen2003geometric}) that are used in the paper. We also introduce extensions of these basic concepts to the 
realm of functional data.

\begin{definition} An {\em extended metric} $d$ on a set $X$ is a metric that allows infinite distances. 
More precisely, a symmetric function $d: X \times X \to [0,\infty] $ which is zero on the diagonal, 
non-zero off the diagonal, and satisfies the triangle inequality.
\end{definition}

\begin{definition}[Kuratowski Space]
Given a set $X$, the {\em Kuratoswki space} $(\kappa(X),d_\infty)$ is the extended metric space whose underlying 
set is the collection of real valued functions on $X$ and the extended metric $d_\infty$ is defined by
\[
d_\infty(f,g):= \sup_{x \in X} |f(x)-g(x)| \,.
\] 
\end{definition}

If $(X, d_X)$ is a metric space, we define the following subspaces of $\kappa(X)$ that
interact with the metric structure of $X$:
\begin{enumerate}[(a)]
    \item $\lip^1(X):=\{f \in \kappa(X) \colon f \text{ is 1-Lipschitz} \}.$
    \item $\Delta(X):=\{f \in \kappa(X) \colon d_X(x,y) \leq f(x)+f(y), \forall x,y \in X \}.$
    \item $\Delta^1(X):=\lip^1(X) \cap \Delta(X)$.
\end{enumerate}

\begin{remark}
A function $f \in \Delta^1 (X)$ may be interpreted as a one-point (pseudo-metric) extension of $(X,d_X)$. Indeed, on the disjoint union $X^\ast = X \sqcup \{x^\ast\}$, define a (pseudo) metric $d_{X^\ast} \colon X^\ast \times X^\ast \to \real$ by $d_{X^\ast}|_{X\times X} = d_X$ and $d_{X^\ast} (x, x^\ast) =d_{X^\ast} (x^\ast,x):= f(x)$, $\forall x \in X$. Symmetry is clearly satisfied and conditions (a) and (b) above ensure that $d_{X^\ast}$ satisfies the triangle inequality.
\end{remark}

The following result is standard.

\begin{proposition}\label{prop:kuratowskiembeddig}
Let $(X,d_X)$ be a metric space. Then, $(\Delta^1(X),d_\infty)$ is a metric space and the 
map $X \to \Delta^1(X)$, defined by $x \mapsto d_X(x,\cdot)$, is an isometric embedding. This 
embedding is called the Kuratowski embedding. Furthermore, if $f \in \Delta^1(X)$, then $d_\infty(f,d_X(x,\cdot))=f(x)$. 
\end{proposition}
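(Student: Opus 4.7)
The plan is to verify the four assertions in sequence, with the only mildly delicate step being that $d_\infty$ actually takes finite values on $\Delta^1(X)$.

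First I would record the basic fact that every $f \in \Delta^1(X)$ is non-negative: setting $y=x$ in condition (b) gives $0 = \dX(x,x) \leq 2f(x)$. Next, to show $d_\infty$ is finite, I would combine the $1$-Lipschitz property of $f$ with condition (b) for $g$. For any $x,y \in X$, Lipschitz-ness gives $f(x) \leq f(y) + \dX(x,y)$, and (b) yields $-g(x) \leq g(y) - \dX(x,y)$; adding these produces $f(x) - g(x) \leq f(y) + g(y)$. The symmetric bound holds as well, so $|f(x)-g(x)| \leq f(y)+g(y)$ for every $x,y$. Fixing any $y_0 \in X$ gives $d_\infty(f,g) \leq f(y_0)+g(y_0) < \infty$. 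The remaining metric axioms (symmetry, positivity, triangle inequality) are immediate from the corresponding properties of $|\cdot|$ and $\sup$.

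Second, I would check that the Kuratowski map is well-defined and isometric. The function $\dX(x,\cdot)$ is $1$-Lipschitz by the triangle inequality, which also gives $\dX(z,w) \leq \dX(x,z)+\dX(x,w)$, so $\dX(x,\cdot) \in \Delta^1(X)$. For $x,y \in X$, the triangle inequality yields $|\dX(x,z)-\dX(y,z)| \leq \dX(x,y)$ for every $z$, while evaluating at $z=y$ attains $\dX(x,y)$; hence $d_\infty(\dX(x,\cdot),\dX(y,\cdot)) = \dX(x,y)$.

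Finally, the formula $d_\infty(f,\dX(x,\cdot)) = f(x)$ is the heart of the statement and is the identity that will be used repeatedly later in the paper. The upper bound follows from two complementary inequalities: $f(y) - \dX(x,y) \leq f(x)$ by the $1$-Lipschitz property of $f$, and $\dX(x,y) - f(y) \leq f(x)$ directly from condition (b). Together these yield $|f(y)-\dX(x,y)| \leq f(x)$ for all $y$. Taking $y=x$ and using $f(x) \geq 0$ achieves equality, so the supremum is precisely $f(x)$. The only place the argument requires a little care is the finiteness of $d_\infty$, where one must invoke (a) for one function and (b) for the other; everything else is a direct unwinding of the definitions.
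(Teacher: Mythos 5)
Your proof is correct. The paper itself does not argue this proposition at all --- it simply defers to Proposition~2.2 of Melleray's paper --- so your self-contained verification is a legitimate (and arguably more useful) alternative; the argument you give is the standard one that the cited reference runs. All steps check out: non-negativity of elements of $\Delta^1(X)$ via condition (b) with $y=x$; finiteness of $d_\infty$ by playing the $1$-Lipschitz bound for one function against the lower bound (b) for the other, which is indeed the only point requiring care since $\kappa(X)$ is only an extended metric space; membership of $d_X(x,\cdot)$ in $\Delta^1(X)$ and the isometry property via the triangle inequality with the supremum attained at $z=y$; and the key identity $d_\infty(f,d_X(x,\cdot))=f(x)$, where the two one-sided inequalities $f(y)-d_X(x,y)\leq f(x)$ and $d_X(x,y)-f(y)\leq f(x)$ give the upper bound and evaluation at $y=x$ (using $f(x)\geq 0$) gives attainment. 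Nothing is missing.
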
 
\begin{proof}
See Proposition 2.2 of \cite{melleray2008some}.
\end{proof}

\begin{definition} Let $X$ be a set and $\mathcal{F}$ be a non-empty, pointwise bounded family of 
real-valued functions on $X$. The {\em pointwise supremum} of $\mathcal{F}$ is the function
$p\sup(\mathcal{F}): X \to \R$ given by
\[
p\sup(\mathcal{F})(x):= \sup_{f \in \mathcal{F}} f(x) \,.
\]
\end{definition}
\begin{proposition}\label{prop:psup}
Let $(X,d_X)$ be a metric space and $\mathcal{F}$ be a non-empty pointwise bounded 
family of real valued functions on $X$.
\begin{enumerate}[\rm (i)]
	\item If $\mathcal{F} \subseteq \lip^1(X)$, then $p\sup(\mathcal{F}) \in \lip^1(X)$.
	\item If $\mathcal{F} \subseteq \Delta(X)$, then $p\sup(\mathcal{F}) \in \Delta(X)$.
	\item If $\mathcal{F} \subseteq \Delta^1(X)$, then $p\sup(\mathcal{F}) \in \Delta^1(X)$.
\end{enumerate}
\end{proposition}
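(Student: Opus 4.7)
The plan is to observe that each of the three conditions defining $\lip^1(X)$, $\Delta(X)$, and $\Delta^1(X)$ is a family of pointwise inequalities that behaves well under the operation of taking pointwise suprema; pointwise boundedness of $\mathcal{F}$ ensures $p\sup(\mathcal{F})$ is a well-defined real-valued function, so there is no issue of the supremum being infinite.

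For (i), I would fix $x,y \in X$ and, for each $f \in \mathcal{F} \subseteq \lip^1(X)$, use the 1-Lipschitz inequality $f(x) \leq f(y) + d_X(x,y)$ to bound $f(x) \leq p\sup(\mathcal{F})(y) + d_X(x,y)$. Taking the supremum over $f \in \mathcal{F}$ on the left yields $p\sup(\mathcal{F})(x) \leq p\sup(\mathcal{F})(y) + d_X(x,y)$, and swapping the roles of $x$ and $y$ gives the desired 1-Lipschitz bound.

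For (ii), I would again fix $x,y \in X$ and note that for every $f \in \mathcal{F} \subseteq \Delta(X)$, $d_X(x,y) \leq f(x) + f(y) \leq p\sup(\mathcal{F})(x) + p\sup(\mathcal{F})(y)$; since this holds for all $f$, the same inequality holds with $p\sup(\mathcal{F})$ in place of $f$, which is exactly what is needed. Part (iii) is then immediate by combining (i) and (ii), since $\Delta^1(X) = \lip^1(X) \cap \Delta(X)$ and any $\mathcal{F} \subseteq \Delta^1(X)$ is contained in both $\lip^1(X)$ and $\Delta(X)$.

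There is no real obstacle here; the only subtlety worth flagging is the well-definedness of $p\sup(\mathcal{F})$, which is guaranteed by the pointwise boundedness hypothesis, together with the mild observation that monotone operations (sup of an upper-bound inequality, and pointwise comparison) commute appropriately with the defining inequalities. The argument is entirely formal and symmetric in $x,y$, so no case analysis or topological input beyond elementary properties of sup is required.
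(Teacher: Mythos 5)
Your proposal is correct and follows essentially the same argument as the paper: in (i) you bound $f(x)$ by $p\sup(\mathcal{F})(y)+d_X(x,y)$ and take the supremum (the paper writes the equivalent rearrangement $F(x)-F(y)\leq \sup_f (f(x)-f(y)) \leq d_X(x,y)$), and (ii) and (iii) are verbatim the paper's reasoning. No issues.
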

\begin{proof}
We use the abbreviation $F := p\sup(\mathcal{F})$.

\smallskip

\noindent
(i) For any $x, y \in X$, we have
\begin{equation}
F(x)-F(y) = \sup_{f \in \mathcal{F}} f(x)-F(y) \leq  \sup_{f \in \mathcal{F}} f(x) - f(y) \leq d(x,y) \,.
\end{equation}
Similarly, $F(y)-F(x) \leq d(x,y)$. Therefore, $F$ is 1-Lipschitz.

\medskip
\noindent
(ii) If $f \in \mathcal{F}$, then $F(x)+F(y) \geq f(x)+f(y) \geq d_X(x,y)$, for all $x,y \in X$.

\medskip
\noindent
(iii) Follows from (i) and (ii).
\end{proof}

\begin{definition}[Whitney-McShane Extension]
Let $(X,d_X)$ be a metric space and $\emptyset \ne A \subseteq X$. Given a $1$-Lipschitz function $f: A \to \R$, the \textit{Whitney-McShane} extension of $f$ is the function $\tilde{f}: X \to \R$ given by
\[
\tilde{f}(x):= \inf_{a \in A} (f(a)+d(a,x)) \,.
\]
\end{definition}

\begin{remark}\label{rem:multipleWhitneyMcshane}
It is simple to verify that the Whitney-McShane extension is the maximal $1$-Lipschitz 
extension of $f$. This implies that if $\emptyset \ne A \subseteq B \subseteq X$, taking two consecutive Whitney-McShane extensions, first from $A$ to $B$ and then from $B$ to $X$, is the same as taking the Whitney-McShane extension from $A$ to $X$.
\end{remark}

\begin{proposition}\label{prop:whitneyMcshane}
Let $(X,d_X)$ be a metric space and $A, B \subseteq X$ be non-empty subspaces. Denote by $\imath_A \colon \lip^1(A) \to \lip^1(X)$ and $\imath_B \colon \lip^1(B) \to \lip^1(X)$ the maps given by Whitney-McShane extension and by $r_A \colon \lip^1(X) \to \lip^1(A)$ and $r_B \colon \lip^1(X) \to \lip^1(B)$ the restriction maps. Then, the following statements hold:
\begin{enumerate}[\rm (i)]
    \item $\imath_A$ is an isometric embedding.
    \item $\imath_A(\Delta^1(A)) \subseteq \Delta^1(X)$.
    \item For any $f \in \lip^1(X)$, $d_\infty(\imath_A \circ r_A (f), \imath_B \circ r_B (f)) \leq 2 \, \dha ^X(A,B)$, where $d_H^X$ denotes the Hausdorff distance in $X$.
\end{enumerate}
\end{proposition}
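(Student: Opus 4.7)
The plan is to handle the three statements in order, all resting on direct manipulation of the explicit Whitney-McShane formula $\imath_A f(x) = \inf_{a \in A}(f(a) + d_X(a, x))$.

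For (i), I would first establish the restriction identity $\imath_A f|_A = f$: the inequality $\imath_A f(a) \leq f(a)$ is obtained by selecting the term with $a' = a$ in the infimum, while the reverse follows from $f \in \lip^1(A)$. This already implies $d_\infty(\imath_A f, \imath_A g) \geq d_\infty(f, g)$. For the opposite direction, setting $c := d_\infty(f, g)$, the termwise bound $f(a') \leq g(a') + c$ passes through the infimum to give $\imath_A f(x) \leq \imath_A g(x) + c$ at every $x \in X$; the symmetric inequality completes the argument.

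For (ii), I would note that $\imath_A f \in \lip^1(X)$ is the standard property recorded in Remark \ref{rem:multipleWhitneyMcshane}, so only the $\Delta$-condition needs verification. I would fix $x, y \in X$ and arbitrary $a, b \in A$ and chain the triangle inequality in $X$ with the hypothesis $f \in \Delta(A)$ to write $d_X(x, y) \leq d_X(x, a) + d_X(a, b) + d_X(b, y) \leq (f(a) + d_X(x, a)) + (f(b) + d_X(b, y))$. Taking infima over $a$ and $b$ separately on the right yields $d_X(x, y) \leq \imath_A f(x) + \imath_A f(y)$.

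For (iii), I would set $\delta := d_H^X(A, B)$ and, for fixed $f \in \lip^1(X)$, $x \in X$, $a \in A$, and $\epsilon > 0$, invoke the definition of Hausdorff distance to choose $b \in B$ with $d_X(a, b) \leq \delta + \epsilon$. Since $f$ is 1-Lipschitz, $f(b) \leq f(a) + d_X(a, b)$, and the triangle inequality gives $d_X(b, x) \leq d_X(a, b) + d_X(a, x)$. Adding these estimates yields $f(b) + d_X(b, x) \leq f(a) + d_X(a, x) + 2(\delta + \epsilon)$. Taking the infimum over $b$, then over $a$, and letting $\epsilon \downarrow 0$ produces $\imath_B r_B f(x) \leq \imath_A r_A f(x) + 2\delta$. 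Swapping the roles of $A$ and $B$ and taking the supremum over $x \in X$ gives the bound. The main delicacy is purely bookkeeping: one must select the approximating $b$ after fixing $a$ (so that the estimate $d_X(a,b) \leq \delta + \epsilon$ is legitimate) and manipulate the two infima in the correct order; no genuinely new ideas are needed beyond the standard Whitney-McShane toolkit.
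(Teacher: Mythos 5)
Your proposal is correct and follows essentially the same route as the paper: the same termwise manipulation of the Whitney–McShane infimum for the isometry in (i), the same triangle-inequality chain for the $\Delta$-condition in (ii), and the same choice of near-optimal approximants across $A$ and $B$ (the paper packages them as a map $\phi\colon B\to A$ with $d_X(b,\phi(b))<r$ for $r>d_H^X(A,B)$, you use an $\epsilon$-slack per point) for (iii). No gaps.
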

%-----------------------
\begin{proof}
(i) Let $f, g \colon A \to \real$ be 1-Lipschitz and $\tilde{f}, \tilde{g} \colon X \to \real$ denote their Whitney-McShane extensions, respectively. Clearly, $d_\infty(f,g) \leq d_\infty(\tilde{f},\tilde{g})$. For the reverse inequality, $\forall x \in X$, we have:
\begin{equation}
\begin{split}
    \tilde{f}(x)-\tilde{g}(x) & = \tilde{f}(x) - \inf_{a \in A} \,(g(a)+d_X(a,x))
   =\sup_{a \in A} \,(\tilde{f}(x)- g(a) - d_X(a,x)) \\
   & \leq \sup_{a \in A} \,(f(a)+ d_X(a,x)- g(a) - d_X(a,x)) \leq d_\infty(f,g) \,.
\end{split}
\end{equation}
Similarly, $\tilde{g}(x) - \tilde{f}(x) \leq d_\infty(f,g)$, $\forall x \in X$. Thus, $d_\infty(f,g) = d_\infty(\tilde{f},\tilde{g})$, as claimed.

\medskip

(ii) Let $f \in \Delta^1(A)$ and $\tilde{f} = \imath_A (f)$. For any $x, y \in X$, we have that
\begin{equation}
\begin{split}
    \tilde{f}(x)+\tilde{f}(y) &= \inf_{a,a' \in A} \,(f(a)+d_X(a,x)+f(a')+d_X(a',y)) \\
    &\geq \inf_{a,a' \in A} \,(d_X(a,a') + d_X(a,x) + d_X(a',y)) \geq d_X(x,y) \,.
\end{split} 
\end{equation}
Therefore, $\tilde{f} \in \Delta^1 (X)$.

\medskip

(iii) If $r > \dha^X(A,B)$, there exists a function $\phi: B \to A$ such that $d_X(b, \phi(b)) < r$, $\forall b$ in $B$. Then, for any $x \in X$, we have
\begin{equation}
\begin{split}
    \imath_A \circ r_A (f)(x) - \imath_B \circ r_B (f)(x) & = \imath_A \circ r_A (f)(x) - \inf_{b \in B} \,(f(b)+d_X(b,x)) \\
   & =\sup_{b \in B} \,(\imath_A \circ r_A (f)(x)- f(b) - d_X(b,x)) \\
   & \leq \sup_{b \in B} \,(f(\phi(b))+ d_X(\phi(b),x)- f(b) - d_X(b,x)) \\ 
   & \leq \sup_{b \in B} \,(f(\phi(b))-f(b)+ d_X(\phi(b),x) - d_X(b,x)) \leq 2r \,.
   \end{split}
\end{equation}
Similarly, $\imath_B \circ r_B (f)(x) - \imath_A \circ r_A (f)(x) \leq 2r$, $\forall x \in X$. Since $r > \dha(A,B)$ is arbitrary, the claim follows.
\end{proof}

\begin{definition}
Let $(X, d_X)$ be metric space and $\emptyset \ne A \subseteq X$. A $1$-Lipschitz function $f: X \to \R$ is said to be {\em supported in} $A$ if $f$ is the Whitney-MacShane extension of its restriction to $A$. If $f$ is supported in a finite subspace of $X$, $f$ is said to be {\em finitely supported}. We define 
\[
E(X):=\{f \in \Delta^1(X) \colon f\ \text{is finitely supported} \} .
\]
\end{definition}
\begin{remark}
Let $(X,d_X)$ be a metric space and $x \in X$. Then, $d_X(x,\cdot)$ is supported in $\{x\}$. Hence, the image of the Kuratowski embedding is contained in $E(X)$.
\end{remark}

The main reason we introduce the space $E(X)$ is that, in general, the separability of $X$ does not imply the separability of $\Delta^1(X)$ (e.g., if $X$ a countable discrete metric space), whereas $E(X)$ is separable (\cite{melleray2008some}, Section 2).

\begin{proposition}\label{prop:seperable}
If $(X,d_X)$ is a separable metric space, then $(E(X),d_\infty)$ is also separable.
\end{proposition}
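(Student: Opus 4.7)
The plan is to construct a countable dense subset of $E(X)$ by taking Whitney-McShane extensions of \emph{rational}-valued functions defined on finite subsets of a countable dense set $D \subseteq X$. Concretely, fix such a $D$ and set
\[
\mm{C} \;:=\; \{\imath_A(g) \,:\, A \subseteq D \text{ finite},\ g \in \Delta^1(A) \cap \mathbb{Q}^A\}.
\]
Each $\imath_A(g)$ lies in $\Delta^1(X)$ by Proposition \ref{prop:whitneyMcshane}(ii), and is finitely supported (in $A$) essentially by definition of the Whitney-McShane extension, so $\mm{C} \subseteq E(X)$. Since $D$ has only countably many finite subsets and for each such $A$ the set $\Delta^1(A) \cap \mathbb{Q}^A$ is countable, $\mm{C}$ is countable.

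Next I would prove density. Given $f \in E(X)$ supported on a finite set $B = \{x_1,\ldots,x_n\}$ and $\varepsilon > 0$, I would choose $A = \{y_1,\ldots,y_n\} \subseteq D$ with $d_X(x_i,y_i) < \varepsilon/6$, so that $\dha^X(A,B) < \varepsilon/6$. Since $f \in \Delta^1(X)$, the restriction $f|_A$ inherits the Lipschitz and $\Delta$ conditions and lies in $\Delta^1(A)$. Proposition \ref{prop:whitneyMcshane}(iii) then gives
\[
d_\infty\bigl(\imath_A(f|_A),\,f\bigr) \;=\; d_\infty\bigl(\imath_A \circ r_A(f),\,\imath_B \circ r_B(f)\bigr) \;\leq\; 2\,\dha^X(A,B) \;<\; \varepsilon/3.
\]

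The key technical step is to approximate $f|_A \in \Delta^1(A)$ by some $g \in \Delta^1(A) \cap \mathbb{Q}^A$ with $d_\infty(f|_A,g) < \varepsilon/3$. Since $A$ is finite, $\Delta^1(A)$ sits inside $\mathbb{R}^A$ as the intersection of the finitely many non-strict linear inequalities $|f(a)-f(a')| \leq d_X(a,a')$ and $f(a)+f(a') \geq d_X(a,a')$; this is a closed convex set with nonempty interior in $\mathbb{R}^A$, since the constant function $a \mapsto M$ with $M > \diam(A)$ satisfies every inequality strictly. A standard convex-geometry fact (push any boundary point slightly along the segment toward an interior point, then approximate by a nearby rational vector) yields density of $\mathbb{Q}^A \cap \Delta^1(A)$ in $\Delta^1(A)$, producing the required $g$. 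By Proposition \ref{prop:whitneyMcshane}(i), $d_\infty(\imath_A(g), \imath_A(f|_A)) = d_\infty(g, f|_A) < \varepsilon/3$, so the triangle inequality gives $d_\infty(\imath_A(g),f) < 2\varepsilon/3 < \varepsilon$, as desired.

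The one nontrivial obstacle is the rational approximation inside $\Delta^1(A)$: because the distances $d_X(a,a')$ are generally irrational, it is not a priori obvious that one can approximate while remaining in $\Delta^1(A)$. The nonempty-interior observation is exactly what rescues this; without it, a function with several simultaneously tight constraints could in principle fail to be approximated by rationals satisfying the same inequalities. Everything else is a straightforward triangle-inequality bookkeeping using Proposition \ref{prop:whitneyMcshane}.
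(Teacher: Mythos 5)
Your proof is correct, and its first half is the same reduction the paper makes: approximate the finite support $B$ of $f$ by a finite subset $A$ of the countable dense set and use Proposition \ref{prop:whitneyMcshane}(i) and (iii) to control the error of the re-extended function. Where you diverge is the countability step. You build an explicit countable dense family by insisting the approximating functions be \emph{rational-valued elements of} $\Delta^1(A)$, which forces you to prove that $\mathbb{Q}^A \cap \Delta^1(A)$ is dense in $\Delta^1(A)$; your convexity argument (the constant $M > \diam(A)$ gives a point satisfying all nontrivial constraints strictly, so $\Delta^1(A)$ is a closed convex set with nonempty interior, whose rational points are therefore dense) is valid and fills that gap. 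The paper avoids this ``nontrivial obstacle'' altogether: it only needs that $E(A) \subseteq \kappa(A) \cong \mathbb{R}^A$ is separable, which is automatic because separability of metric spaces is hereditary --- the countable dense set in $\kappa(A)$ need not itself lie in $\Delta^1(A)$. It then writes $\imath_C(E(C)) = \bigcup_{A \subseteq C \text{ finite}} \imath_A(E(A))$ as a countable union of separable spaces and concludes. So your version buys an explicit countable dense subset of $E(X)$ (potentially useful for constructions or computations), at the cost of an extra convex-geometry lemma; the paper's version is shorter because it never needs the dense set to respect the constraints defining $\Delta^1(A)$.
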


\begin{proof} 
Let $C$ be a countable dense subset of $X$. By Remark \ref{rem:multipleWhitneyMcshane} and Proposition \ref{prop:whitneyMcshane}, we have that $\imath_C(E(C)) \subseteq E(X)$. First, we show that $\imath_C (E(C))$ is dense in $E(X)$.  Let $f \in E(X)$ be supported in the finite set $A= \{x_1,\ldots,x_n\}$ and $\epsilon >0$. There exists a subset $B=\{x_1',\ldots,x_n'\}$ of $C$ such that $d_X(x_i,x_i') < \epsilon/2$, for $1 \leq i \leq n$.  Let $g \in E(C)$ be the Whitney-McShane extension of $r_B(f)$ to $C$. By Remark \ref{rem:multipleWhitneyMcshane}, $\imath_C(g)=\imath_B \circ r_B(f) \in E(X)$.  Moreover, since $f$ is supported in $A$, we may  write $f = \imath_A \circ r_A (f)$. Thus, Proposition \ref{prop:whitneyMcshane} implies that
\begin{equation}
d_\infty(f, \imath_C(g))=d_\infty(\imath_A \circ r_A (f),\imath_B \circ r_B(f)) \leq \epsilon \,.
\end{equation}
This shows that $\imath_C(E(C))$ is dense in $E(X)$. It remains to show that $\imath_C(E(C))$ is separable. If $A$ is finite, $E(A) \subseteq \kappa(A)$ is separable, as the set of rational valued functions on $A$ is countable and dense in $\kappa(A)$. Since
\begin{equation}
\imath_C(E(C))= \bigcup_{A \subseteq C \text{ finite}} \imath_A(E(A)) \,,
\end{equation}
it follows that $\imath_C(E(C))$ is separable because it is a countable union of separable metric spaces.
\end{proof}

Spaces of type $E(X)$ can be used as the building blocks of a Urysohn space (\cite{heinonen2003geometric}, Theorem 3.4). As one of our goals is to extend the construction to Urysohn fields, we close this section by introducing functional analogues of  $\Delta^1(X)$ and $E(X)$. Recall that any $f \in \Delta_1 (X)$ may be interpreted as a one-point extension of $(X, d_X)$ to a pseudo-metric space $X^\ast = X \sqcup \{x^\ast\}$, where 
$d(x, x^\ast) = f(x)$, $\forall x \in X$. In the functional setting, we also have a 1-Lipschitz function  $\pi \colon X \to B$, where $B$ is separable. As such, a one-point extension of the functional data also requires an extension of $\pi$ to a 1-Lipschitz function on $X^\ast$. This motivates the next definition.

\begin{definition}
Let $\pi \colon X \to B$ be a $B$-field. We define the metric spaces $\Delta^1(\pi \colon X \to B)$ and $E(\pi \colon X \to B)$ by:
\begin{enumerate}[(i)]
    \item $\Delta^1(\pi \colon X \to B):=\{(f,b) \in \Delta^1(X) \times B:  d_B(b,\pi(x)) \leq f(x), \forall x \in X \}$ and
    \item $E(\pi \colon X \to B):=\{(f,b) \in E(X) \times B: d_B(b,\pi(x)) \leq f(x), \forall x \in X \}$,
\end{enumerate}
where the metric is given by $d (f_1, b_1), (f_2, b_2)) = \max \{ d_\infty (f_1, f_2), d_B (b_1, b_2)\}$.
\end{definition}

Note that the mapping $X \hookrightarrow E(\pi \colon X \to B)$ given by $x \mapsto (d_X(x,\cdot), \pi(x))$ is an isometric embedding. Moreover, if we let $\pi_B \colon E(\pi \colon X \to B) \to B$ denote the projection onto $B$, then the diagram
\begin{equation}
\begin{tikzcd}
X \arrow[rd, "\pi"'] \arrow[r, hook] & E(\pi \colon X \to B) \arrow[d, "\pi_B"] \\
& B
\end{tikzcd}
\end{equation}
commutes.

\begin{remark} \label{rem:relative} \
\begin{enumerate}[(i)]
\item If $X$ and $B$ are separable metric spaces, then so is $E(\pi\colon X \to B)$.
\item Let $\pi\colon X \to B$ be $1$-Lipschitz and $A \subseteq X$.  If $(f, b) \in \Delta^1(\pi\colon A \to B)$ and $\tilde{f}\colon X \to \R$ is the Whitney-McShane extension of $f$ to $X$, then $(\tilde{f}, b) \in \Delta^1(\pi\colon X \to B)$.
\end{enumerate}
\end{remark}

%---------------------------------

\section{Universal and Urysohn Fields} \label{sec:urysohn}

Recall that a Polish space $X$ is {\em universal} if any Polish space $Y$ admits an isometric embedding $\imath \colon Y \to X$. In general, however, universal spaces lack good homogeneity properties, as shown in Proposition \ref{prop:productNotUrysohn} below. Urysohn 
spaces are universal spaces that satisfy a strong form of homogeneity:  for any pair of isometric embeddings $\imath_1, \imath_2 \colon A \to X$, 
where $A$ is a finite metric space, there exists an isometry $\jmath \colon X \to X$ such that $\jmath \circ \imath_1 = \imath_2$. The reader may consult \cite{melleray2007geometry} for a detailed discussion of these properties. In this section, we investigate functional counterparts of universal and Urysohn spaces.

\subsection{Existence and Uniqueness} \label{sec:exst-unq}

%---------------------
\begin{definition}
A $B$-field $\mm{X} = (X,B,\pi)$ is {\em $B$-universal} if for any $B$-field $\mm{Y} = (Y,B, \phi)$, there is an isometric embedding $\imath \colon Y \to X$ such that $\pi \circ \imath = \phi$. In other words, the diagram
\[
\begin{tikzcd}
Y \arrow[rd, "\phi"'] \arrow[r, hook,  "\imath"] & X \arrow[d, "\pi"] \\
& B
\end{tikzcd}
\]
is commutative.
\end{definition}

\begin{proposition}\label{prop:universalproduct}
If $X$ is a universal metric space and $B$ is Polish, then the projection map $\pi_B: X \times B \to B$ is a $B$-universal field, where $X \times B$ is endowed with the metric 
\[
d ((x_1, b_1), (x_2, b_2)) = \max \, \{d_X (x_1,, x_2), d_B (b_1, b_2)\} .
\]
\end{proposition}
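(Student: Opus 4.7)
The plan is direct and essentially follows from packaging an isometric embedding of the domain into $X$ together with the field map itself. First, I would verify that $\pi_B\colon X\times B\to B$ is indeed a $B$-field: with the max metric, $d_B(\pi_B(x_1,b_1),\pi_B(x_2,b_2))=d_B(b_1,b_2)\le \max\{d_X(x_1,x_2),d_B(b_1,b_2)\}$, so $\pi_B$ is $1$-Lipschitz.

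Next, given an arbitrary $B$-field $\mm{Y}=(Y,B,\phi)$ with $Y$ Polish, I would use the universality of $X$ to choose an isometric embedding $j\colon Y\to X$, and then define
\[
\imath\colon Y\to X\times B,\qquad \imath(y):=(j(y),\phi(y)).
\]
Commutativity $\pi_B\circ\imath=\phi$ is immediate from the second coordinate.

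The only thing to check is that $\imath$ is an isometric embedding. For $y_1,y_2\in Y$,
\[
d_{X\times B}(\imath(y_1),\imath(y_2))=\max\{d_X(j(y_1),j(y_2)),\,d_B(\phi(y_1),\phi(y_2))\}.
\]
Since $j$ is an isometric embedding, the first term equals $d_Y(y_1,y_2)$, and since $\phi$ is $1$-Lipschitz, the second term is at most $d_Y(y_1,y_2)$. The maximum is therefore exactly $d_Y(y_1,y_2)$, as required.

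There is no substantive obstacle here; the proof is a one-line construction plus a routine verification. The only mild subtlety is the choice of the $\max$ metric on $X\times B$: with the Euclidean or sum metric, the second coordinate would add spurious distance and $\imath$ would fail to be an isometry. The $\max$ metric is exactly what makes $\imath$ distance-preserving while keeping $\pi_B$ a $1$-Lipschitz projection, which is why it appears in the hypothesis.
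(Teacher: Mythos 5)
Your proof is correct and follows essentially the same route as the paper: embed $Y$ isometrically into $X$ via universality, pair this embedding with $\phi$ to map into $X\times B$, and use the $1$-Lipschitz property of $\phi$ together with the max metric to see the pairing is an isometric embedding commuting with the projection. You merely spell out the metric verification that the paper leaves as ``clearly,'' so there is nothing to add.
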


\begin{proof}
Let $\phi\colon Y \to B$ be a $B$-field. By the universality of  $X$, there exists an isometric embedding $\imath \colon X \to Y$. Since $\phi$ is $1$-Lipschitz, the map $\imath \times \phi\colon Y \to X \times B$ is an isometric embedding and the diagram
\begin{equation}
\begin{tikzcd}
Y \arrow[rd, "\phi"'] \arrow[r, hook,  "\imath \times \phi"] & X \times B \arrow[d, "\pi_B"] \\
& B 
\end{tikzcd}
\end{equation}
clearly commutes.
\end{proof}

\begin{remark}\label{rem:universalRetract}
A universal $B$-field $\pi \colon X \to B$ is a $1$-Lipschitz retraction. More precisely, by considering the identity map $B \to B$, we obtain an isometric embedding $\imath \colon B \to X$ such that the diagram
\begin{equation}
\begin{tikzcd}
B \arrow[rd, "id"'] \arrow[r, hook,  "\imath"] & X \arrow[d, "\pi"] \\
& B
\end{tikzcd}
\end{equation}
commutes. If we identify $B$ with $\imath (B)$, then $\pi$ is a retraction.
\end{remark}

\begin{definition}[Urysohn Field]
A $B$-field $\pi\colon U \to B$ is called a {\em Urysohn field over $B$} if for each finite  subspace $A$ of $U$ and $1$-Lipschitz map $\phi\colon A^\ast \to B$ defined on a one-point metric extension $A^\ast= A \sqcup \{a^\ast\}$, satisfying $\phi|_A = \pi|_A$, there exists  an isometric embedding  $\imath \colon A^* \to U$ such that the restriction $\imath|_A$ is the inclusion map and $\pi \circ \imath = \phi$.
\end{definition}

\begin{theorem}[Existence and Uniqueness of Urysohn Fields]\label{thm:urysohnmap}
If $B$ is a Polish space, then the following statements hold:
\begin{enumerate}[\rm (i)]
    \item there exists a Urysohn field $\pi\colon U \to B$;
    \item if $\pi\colon U \to B$ and $\pi'\colon U' \to B$ are Urysohn fields, then there exists a bijective isometry $\psi \colon U \to U'$ such that the diagram
\[
\begin{tikzcd}
U \arrow[rd, "\pi"'] \arrow[rr,  "\psi"] & & U' \arrow[ld, "\pi'"] \\
& B &
\end{tikzcd}
\]
is commutative.
\end{enumerate}
\end{theorem}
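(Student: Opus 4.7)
My plan is a Katětov-style iteration adapted to fields. Start with $X_0 = B$ and $\pi_0 = \mathrm{id}_B$, and inductively set $X_{n+1} = E(\pi_n \colon X_n \to B)$ with $\pi_{n+1}$ the projection onto $B$. Proposition \ref{prop:seperable} together with Remark \ref{rem:relative}(i) guarantees that each $X_n$ is separable, and the map $x \mapsto (d_{X_n}(x,\cdot), \pi_n(x))$ gives an isometric inclusion $X_n \hookrightarrow X_{n+1}$ commuting with the projections to $B$. Let $X_\infty = \bigcup_n X_n$ (separable), let $U$ be its completion, and extend $\pi_\infty$ to a $1$-Lipschitz $\pi \colon U \to B$ by continuity. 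The critical point is that the definition of $E(\pi_n\colon X_n \to B)$ combined with Remark \ref{rem:relative}(ii) says precisely that every finite $A \subseteq X_n$ together with a compatible one-point extension $A^* = A \sqcup \{a^*\}$ and $\phi \colon A^* \to B$ with $\phi|_A = \pi_n|_A$ is realized by an actual point of $X_{n+1}$: take $(\tilde f, \phi(a^*))$ where $f(a) = d_{A^*}(a, a^*)$ and $\tilde f$ is its Whitney--McShane extension to $X_n$. So the Urysohn property holds exactly for finite subsets already lying in some $X_n$.

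\textbf{Passing to the completion.} For a general finite $A = \{a_1,\dots,a_k\} \subseteq U$ with extension data $(A^*, \phi)$, I would build the required point of $U$ as the limit of a Cauchy sequence. Given $\epsilon > 0$, approximate $A$ by $A' \subseteq X_{n(\epsilon)}$ with $d_U(a_i, a_i') < \epsilon$ and consider the perturbed extension $A'^*$ with distances $d(a_i', a^*) := d_{A^*}(a_i, a^*) + 2\epsilon$ and target $\phi(a^*)$. The slack $2\epsilon$ is chosen large enough that the $1$-Lipschitz condition $d_B(\phi(a^*), \pi(a_i')) \leq d(a_i', a^*)$ and the triangle inequality inside $A'^*$ both hold, so the previous paragraph yields a point $u_\epsilon \in X_{n(\epsilon)+1}$ realizing these distances exactly. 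Taking a summable sequence $\epsilon_m$ and, at stage $m+1$, including the previously chosen point $u_{\epsilon_m}$ in the finite set to be extended (with the distance $d(u_{\epsilon_m}, u_{\epsilon_{m+1}}^*)$ pinned to something of order $\epsilon_m$) forces $\{u_{\epsilon_m}\}$ to be Cauchy in $U$; its limit $u \in U$ satisfies $d_U(u, a_i) = d_{A^*}(a_i, a^*)$ and $\pi(u) = \phi(a^*)$, giving the desired isometric embedding of $A^*$ fixing $A$.

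\textbf{Uniqueness.} For $\pi \colon U \to B$ and $\pi' \colon U' \to B$ both Urysohn, I would run a back-and-forth on countable dense subsets $\{x_i\} \subseteq U$ and $\{x_i'\} \subseteq U'$. Inductively maintain a bijective isometry $\psi_n \colon F_n \to F_n'$ between finite subsets such that $\pi' \circ \psi_n = \pi|_{F_n}$ and $\{x_1,\dots,x_n\} \subseteq F_n$, $\{x_1',\dots,x_n'\} \subseteq F_n'$. To add a target for $x_{n+1} \in U \setminus F_n$, regard $F_n' \sqcup \{x_{n+1}^*\}$ as a one-point extension of $F_n'$ with distances transported through $\psi_n$ from $d_U(x_{n+1}, \cdot)$ and with $\phi(x_{n+1}^*) := \pi(x_{n+1})$; this is a valid extension data since $\psi_n$ is an isometry commuting with the projections. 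The Urysohn property of $\pi'$ then produces the image $\psi_{n+1}(x_{n+1}) \in U'$; the symmetric move takes care of $x_{n+1}'$. Then $\bigcup_n \psi_n$ is an isometry between dense subsets that commutes with the projections, and extends uniquely by continuity to the required bijective isometry $U \to U'$ over $B$.

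\textbf{Main obstacle.} The routine parts are the iterative construction, separability bookkeeping, and the back-and-forth. The substantive step is the Cauchy construction in the passage to the completion: the slack perturbation must be set up so that each successive one-point extension not only achieves the target distances up to $\epsilon_m$, but is also pinned close to the previously built approximation, and these two demands have to be reconciled with the triangle inequality of the augmented pseudo-metric at every stage.
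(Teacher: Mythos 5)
Your overall architecture (the Katětov-style tower $X_{n+1}=E(\pi_n\colon X_n\to B)$, realization of finite one-point extensions at each finite stage via $(\tilde f,\phi(a^\ast))$, and the back-and-forth uniqueness argument over $B$) is the same as the paper's, and your finite-level and uniqueness steps are correct. The gap is in the passage to the completion, which you yourself single out as the main obstacle: the perturbed prescription $d(a_i',a^\ast):=d_{A^\ast}(a_i,a^\ast)+2\epsilon$ is in general \emph{not} a legitimate one-point (pseudo-)metric extension of $A'=\{a_1',\dots,a_k'\}$, because the constant slack cancels in differences. One needs $|d(a_i',a^\ast)-d(a_j',a^\ast)|\le d_U(a_i',a_j')$, but the left side equals $|d_{A^\ast}(a_i,a^\ast)-d_{A^\ast}(a_j,a^\ast)|$, which is only bounded by $d_U(a_i,a_j)\le d_U(a_i',a_j')+2\epsilon$; e.g.\ with $d_U(a_1,a_2)=1$, $d_{A^\ast}(a_1,a^\ast)=2$, $d_{A^\ast}(a_2,a^\ast)=1$ and approximants with $d_U(a_1',a_2')=1-2\epsilon$, the required inequality fails no matter how large a constant slack you add. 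So the claim that the slack makes ``the triangle inequality inside $A'^\ast$'' hold is false, and you cannot invoke the finite-stage extension property on this data. The subsequent ``pinning'' of $u_{\epsilon_{m+1}}$ to $u_{\epsilon_m}$ has the same unverified issue: the prescribed distance of order $\epsilon_m$ to the old point must itself be checked against all triangle inequalities with the approximating set, which you do not do.

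The paper's proof of Theorem \ref{thm:urysohnmap} resolves exactly this difficulty with the Whitney--McShane machinery rather than an additive perturbation: it takes $f(a)=d_{A^\ast}(a,a^\ast)$ on $A$, extends it to $\tilde f$ on all of $U$ (so $(\tilde f,b)\in\Delta^1(\pi\colon U\to B)$ by Remark \ref{rem:relative}(ii)), and then \emph{restricts} $\tilde f$ to the approximating points $\{x_1^\epsilon,\dots,x_n^\epsilon\}\subseteq X_{N^\epsilon}$; a restriction of an admissible pair is automatically admissible, so no triangle-inequality check is needed, and the values move by at most $\epsilon$ because $\tilde f$ is $1$-Lipschitz. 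The Cauchy property of the resulting points $x^\epsilon=(\tilde f^\epsilon,b)\in X_{N^\epsilon+1}$ then comes for free from Proposition \ref{prop:whitneyMcshane}(iii), giving $d_U(x^\epsilon,x^{\epsilon'})\le 2(\epsilon+\epsilon')$ with no pinning device, and Proposition \ref{prop:kuratowskiembeddig} identifies the limiting distances as $f(u_i)=d_{A^\ast}(u_i,a^\ast)$. If you replace your perturbed prescription by ``restrict the global Whitney--McShane extension of $f$ to the approximating set,'' your argument closes and coincides with the paper's; as written, the completion step does not go through.
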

\begin{proof}
(i) {\em Existence.} Let $X_0=B$ and $\pi_0\colon X_0 \to B$ be the identity map. Inductively, define $X_{n+1} := E(X_n \to B)$ and let $\pi_{n+1}\colon X_{n+1} \to B$ be the projection $\pi_{n+1} (f,b) =b$. Then, the diagram
\begin{equation}
\begin{tikzcd}
X_0 \arrow[d, "\pi_0"] \arrow[r, hook] & X_1 \arrow[d, "\pi_1"] \arrow[r, hook] & 
X_2 \arrow[d, "\pi_2"] \arrow[r, hook] & \cdots \\
B  \arrow[r, "id"] &   B \arrow[r, "id"] &   B \arrow[r, "id"] & \cdots
\end{tikzcd}
\end{equation}
is commutative, where the mappings on the first row are isometric embeddings. By viewing the isometric embeddings $X_n \hookrightarrow X_{n+1}$ as inclusions, we may write the metric co-limit as $X_\infty = \cup_{n\geq 0} X_n$. We denote the limiting 1-Lipschitz map by $\pi_\infty: X_\infty \to B$. Let $U$ be the completion of $X_\infty$ and $\pi\colon U \to B$ be the $1$-Lipschitz extension of $\pi_\infty$. Note that $U$ is a Polish space by Remark \ref{rem:relative}(i). We now show that $\pi \colon U \to B$ is Urysohn. 

Let $A=\{u_1,\dots,u_n \}$ be a finite subspace of $U$, $A^\ast =\{u_1,\dots,u_n,u^\ast\}$ be a  one-point metric extension of $A$, and $\phi \colon A^\ast \to B $ a 1-Lipschitz extension of $\pi|_A$. Let $f \colon A \to \R$ to be the map $a \mapsto d_{A^\ast}(a,u^*)$ and $b :=\phi(u^*)$. Note that $(f,b) \in E(\pi \colon A \to B)$. Let $\tilde{f}$ be the Whitney-McShane extension of $f$ to $U$. By Remark \ref{rem:relative}(ii), $(\tilde{f},b) \in E(\pi \colon U \to B)$.

For each $\epsilon>0$, there exist $N^\epsilon \in \Z^+$ and $x^\epsilon_1,\dots, x^\epsilon_n \in X_{N^\epsilon}$ such that $d_U(u_i,x^\epsilon_i) \leq \epsilon$, for all $i=1,\ldots,n$.  Defining $f^\epsilon:=\tilde{f}|_{\{x^\epsilon_1,\ldots, x^\epsilon_n \}}$, we have that $(f^\epsilon,b) \in E(\pi \colon \{x^\epsilon_1,\ldots,x^\epsilon_n \} \to B)$. Let $\tilde{f}^\epsilon$ be the Whitney-McShane extension of $f^\epsilon$ to $X_{N^\epsilon}$. By Remark \ref{rem:relative}(ii), 
\begin{equation}
x^\epsilon:=(\tilde{f}^\epsilon,b) \in X_{N^\epsilon+1}=E(\pi \colon X_{N^\epsilon} \to B) \,.
\end{equation}
By Proposition \ref{prop:whitneyMcshane}, $d_U (x^\epsilon,x^{\epsilon'}) \leq 2(\epsilon+ \epsilon')$. Therefore, there exists $x \in U$ such that $\lim_{\epsilon \to 0} x^\epsilon = x$. Note that $\pi(x)=\lim_{\epsilon \to 0}\pi(x^\epsilon)=b$ and 
\begin{equation}
d_U(x,u_i) =\lim_{\epsilon \to 0}d_U(x^\epsilon,u_i)
          =\lim_{\epsilon \to 0}d_U(x^\epsilon,x^\epsilon_i)\\
          =\lim_{\epsilon \to 0} \tilde{f}(x^i_\epsilon)
          =f(u_i)=d_{A^\ast}(u_i,u^\ast) \,.
\end{equation}
Thus, by mapping $u^\ast$ to $x$, we get an isometric embedding $\imath \colon A^\ast \hookrightarrow U$ satisfying $\pi \circ \imath = \phi$, as desired.

\medskip

(ii) {\em Uniqueness.} Let $\pi \colon U \to B$ and $\pi' \colon U' \to B$ be Uryhson fields, and $C=\{x_1,x_2,\dots \}$ and $C'=\{x'_1, x'_2,\dots \}$ be countable dense subsets of $U$ and $U'$, respectively. We start by constructing nested sequences $\{C_n\}$ and $\{C'_n\}$ of finite subsets of $U$ and $U'$, respectively, and bijective isometries $\psi_n \colon C_n \to C'_n$ satisfying $\pi|_{C_n} = \pi'|_{C'_n} \circ \psi_n$. For $n=0$, define
\begin{equation}
C_0:=\emptyset, \ C'_0:= \emptyset, \ \text{and} \ \psi_0\colon C_0 \to C'_1 \,.
\end{equation}
Assuming that $C_n$, $C'_n$ and $\psi_n$ have been defined, we construct $C_{n+1}$, $C'_{n+1}$ and $\psi_{n+1}$, as follows. Since Uryhson fields have the one-point extension property, there exists $x' \in U'$ and  a bijective isometry $\psi'_n \colon C_n \cup \{x_{n+1}\} \to C'_n \cup \{x'\}$ that extends $\psi_n$ and for which the diagram
\begin{equation}
\begin{tikzcd}[row sep=0.35in]
C_n \cup \{x_{n+1}\} \arrow[rd, "\pi"'] \arrow[rr,  "\psi'_n"] &  & C'_n \cup \{x'\} \arrow[ld, "\pi'"] \\
& B &
\end{tikzcd}
\end{equation}
commutes. Similarly, there exists $x \in U$ and a bijective isometry $\psi''_n \colon C_n \cup \{x_{n+1},x\} \to C'_n \cup \{x',x'_{n+1}\}$ extending $\psi'_n$ such that the diagram
\begin{equation}
\begin{tikzcd}[row sep=0.35in]
C_n \cup \{x_{n+1},x\} \arrow[rd, "\pi"'] \arrow[rr,  "\psi''_n"] &  & C'_n \cup \{x',x'_{n+1}\} \arrow[ld, "\pi'"] \\
& B &
\end{tikzcd}
\end{equation}
is commutative. Let $C_{n+1}:=C_n \cup \{x_{n+1},x\}, C'_{n+1}:= C'_n \cup \{x',x'_{n+1}\}$ and $\psi_{n+1}:=\psi''_n$, which satisfies $\pi|_{C_{n+1}} = \pi'|_{C'_{n+1}} \circ \psi_{n+1}$ by construction. Then, in the limit, we obtain an isometry $\psi_\infty \colon C_\infty \to C'_\infty$ between the metric co-limits $C_\infty:= \cup_n C_n$ and $C'_\infty=\cup_n C'_n$ that extends $\psi_n$, for every $n$. Since $C_\infty$ and $C'_\infty$ are dense (as they contain $C$ and $C'$, respectively), the metric spaces $U$, $U'$ and $B$ are complete, and $\pi$ and $\pi'$ are $1$-Lipschitz, the map $\psi_\infty$ extends to  a bijective isometry $\psi \colon U \to U'$ such that $\pi=\pi' \circ \psi$.
\end{proof}

\begin{proposition}\label{prop:urysohnuniversal}
A Urysohn field over $B$ is a universal $B$-field.
\end{proposition}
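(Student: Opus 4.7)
The plan is to use a back-and-forth style inductive construction that exploits the one-point extension property defining a Urysohn field, followed by a density/completeness argument. Fix an arbitrary $B$-field $\phi \colon Y \to B$ with $Y$ Polish, and choose a countable dense sequence $(y_n)_{n \geq 1}$ of distinct points in $Y$. The goal is to produce an isometric embedding $\imath \colon Y \to U$ with $\pi \circ \imath = \phi$.

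First I would build inductively a nested family of isometric embeddings $\imath_n \colon \{y_1, \dots, y_n\} \to U$ satisfying $\pi \circ \imath_n = \phi|_{\{y_1, \dots, y_n\}}$. The inductive step is the heart of the argument. Given $\imath_n$, set $A_n := \imath_n(\{y_1,\dots,y_n\})$ and form the one-point extension $A_n^\ast = A_n \sqcup \{a^\ast\}$ by decreeing $d(a^\ast, \imath_n(y_i)) := d_Y(y_{n+1}, y_i)$; the triangle inequality and strict positivity hold because $\imath_n$ is an isometry and the $y_i$ are distinct. Extend $\pi|_{A_n}$ to a map $\varphi \colon A_n^\ast \to B$ by $\varphi(a^\ast) := \phi(y_{n+1})$. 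The map $\varphi$ is $1$-Lipschitz because $\phi$ is and $d(a^\ast, \imath_n(y_i)) = d_Y(y_{n+1},y_i)$, so the Urysohn property furnishes an isometric embedding of $A_n^\ast$ into $U$ that fixes $A_n$ and commutes with projection to $B$; sending $y_{n+1}$ to the image of $a^\ast$ defines $\imath_{n+1}$. The base case is handled either by applying the Urysohn property to the empty finite subspace, or by first noting that $\pi$ is surjective (via the one-point extension property applied to any singleton of $U$) and picking $\imath_1(y_1) \in \pi^{-1}(\phi(y_1))$.

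In the limit, the maps $\imath_n$ assemble into an isometric map $\imath_\infty \colon \{y_n : n \geq 1\} \to U$ with $\pi \circ \imath_\infty = \phi|_{\{y_n\}}$. Since $(y_n)$ is dense in $Y$ and $U$ is complete, $\imath_\infty$ extends uniquely by continuity to an isometric embedding $\imath \colon Y \to U$. Continuity of $\pi$ and density then give $\pi \circ \imath = \phi$ on all of $Y$.

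The main obstacle is purely a matter of checking that, at each inductive step, the pair $(A_n^\ast, \varphi)$ really satisfies the hypotheses of the Urysohn condition, i.e.\ that $A_n^\ast$ is a genuine (not merely pseudo-) metric extension and that $\varphi$ is $1$-Lipschitz. Both facts reduce immediately to $\imath_n$ being an isometry and $\phi$ being $1$-Lipschitz on $Y$, so no delicate estimate is required; the argument is essentially a bookkeeping exercise dressed up as induction.
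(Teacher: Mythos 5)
Your proof is correct and follows essentially the same route as the paper: induct over a countable dense subset, use the one-point extension property of the Urysohn field at each step (with the new distance data pulled back from $Y$ and the new $B$-value set to $\phi$ of the new point), then extend the resulting isometric embedding of the dense set to all of $Y$ by completeness. The paper's proof is just a terser version of the same argument, also starting from the empty embedding as the base case.
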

\begin{proof}
Let $\pi\colon U \to B$ be a Urysohn field and $\phi\colon X \to B$ any $B$-field.  Choose an arbitrary countable dense subset $C=\{x_1,x_2,\ldots \}$ of $X$. Set  $X_0=\emptyset$ and $X_n=\{x_1,\ldots,x_n\}$, $\forall n>0$, and let $\imath_0$ be the (empty) embedding of $X_0$ into $U$. Inductively, assume that an embedding $\imath_n\colon X_n \to U$ such that $\phi|_{X_n}=\pi \circ \imath_n$ has bee constructed. Since $\pi$ is Urysohn, there exists an isometric embedding $\imath_{n+1}\colon X_{n+1} \to U$ extending $\imath_n$ such that $\phi|_{X_{n+1}}=\pi \circ \imath_{n+1}$. These embeddings combine to give an isometric embedding $\imath_\infty \colon C \to U$ such that the diagram
\begin{equation}
\begin{tikzcd}[column sep=large]
C \arrow[rd, "\phi"'] \arrow[r, hook, "\imath_\infty"] & U \arrow[d,"\pi"] \\
& B
\end{tikzcd}
\end{equation}
is commutative. Since $X$ is contained in the closure $\overline{C}$ of $C$, the desired embedding is obtained by extending $\imath_\infty$ to $\overline{C}$.
\end{proof}

Propositions \ref{prop:universalproduct} and \ref{prop:urysohnuniversal} imply that if $U$ is a Urysohn space, then the projection  $\pi_B \colon U \times B \to B$ is a universal field. In sharp contrast, $\pi_B$ is never a Uryhson field if $|B| > 1$.

\begin{proposition}\label{prop:productNotUrysohn}
Let $U$ and $B$ be Polish spaces and $U \times B$ be the product space endowed with the max metric. If $B$ has more than one element, then the projection map $\pi_B \colon U \times B \to B$ is not Urysohn.
\end{proposition}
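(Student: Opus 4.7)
The plan is to exhibit a finite subspace $A \subset U \times B$, a one-point metric extension $A^{\ast} = A \sqcup \{a^{\ast}\}$, and a $1$-Lipschitz map $\phi \colon A^{\ast} \to B$ extending $\pi_B|_A$ for which no isometric embedding $\imath \colon A^{\ast} \hookrightarrow U \times B$ restricting to the inclusion on $A$ and satisfying $\pi_B \circ \imath = \phi$ can exist. The guiding observation is that under the max metric on $U \times B$, once $\imath(a^{\ast})$ has its $B$-coordinate prescribed to be $\phi(a^{\ast})$, the only remaining freedom lies in the $U$-coordinate $x^{\ast}$; for each point $a = (x_0, b) \in A$, the distance $d(\imath(a^{\ast}), a)$ collapses to $\max\{d_U(x^{\ast}, x_0), d_B(\phi(a^{\ast}), b)\}$. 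So if two points of $A$ share the same $U$-coordinate but the prescribed distances from $a^{\ast}$ force $d_U(x^{\ast}, x_0)$ to equal two different values, we obtain a contradiction.

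To realize this, since $|B| > 1$, I fix $b_0 \neq b_1 \in B$ and set $\delta := d_B(b_0, b_1) > 0$. I choose an arbitrary $x_0 \in U$ and take $a_0 := (x_0, b_0)$, $a_1 := (x_0, b_1)$, so $A := \{a_0, a_1\}$ satisfies $d(a_0, a_1) = \delta$. I define the one-point extension $A^{\ast} = A \sqcup \{a^{\ast}\}$ by $d(a^{\ast}, a_0) := \delta$ and $d(a^{\ast}, a_1) := 3\delta/2$; the three triangle inequalities are immediate (they reduce to $\delta \le 5\delta/2$, $\delta \le 5\delta/2$, and $3\delta/2 \le 2\delta$). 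I define $\phi(a_i) := b_i$ for $i = 0, 1$ and $\phi(a^{\ast}) := b_0$; this map extends $\pi_B|_A$ and is $1$-Lipschitz because $d_B(b_0, b_0) = 0 \le \delta$ and $d_B(b_0, b_1) = \delta \le 3\delta/2$.

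Finally, assuming toward contradiction that an isometric embedding $\imath$ of the required form existed, the condition $\pi_B \circ \imath = \phi$ would force $\imath(a^{\ast}) = (x^{\ast}, b_0)$ for some $x^{\ast} \in U$. Then $d(\imath(a^{\ast}), a_0) = \max\{d_U(x^{\ast}, x_0), 0\} = d_U(x^{\ast}, x_0)$ must equal $\delta$, whence $d_U(x^{\ast}, x_0) = \delta$. On the other hand, $d(\imath(a^{\ast}), a_1) = \max\{d_U(x^{\ast}, x_0), d_B(b_0, b_1)\} = \max\{\delta, \delta\} = \delta$, which contradicts the required value $3\delta/2$. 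There is no serious obstacle in this argument; all the substance lies in noticing that the rigidity of the max metric on $U \times B$ overconstrains $a^{\ast}$ the moment two points of $A$ share a common $U$-coordinate, and in picking distances on $A^{\ast}$ that are valid as a metric extension yet incompatible with this rigidity.
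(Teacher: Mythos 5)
Your proof is correct and follows essentially the same route as the paper: fix $b_0\neq b_1$ with $\delta=d_B(b_0,b_1)$, build a two-point subset $A\subset U\times B$ and a one-point metric extension with distances that the max metric cannot realize once $\phi$ pins the $B$-coordinate of the new point to $b_0$. Your configuration (both points of $A$ sharing the same $U$-coordinate) is if anything slightly cleaner, since it avoids the paper's choice of a nearby second point $u_1$ and the attendant (and unnecessary) appeal to $U$ being Urysohn.
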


\begin{proof}
Arguing by contradiction, assume that $\pi_B$ is Urysohn . Let $b_0, b_1$ be distinct points in $B$, $\delta:=d_B(b_0,b_1)$, and $u_0$ an arbitrary point in $U$. Since $U$ is a Urysohn space, there exists $u_1 \in U$ such that $d_U(u_0,u_1) < \delta$. Let $A:= \{(u_0,b_0), (u_1,b_1)\} \subseteq U \times B$, $A^\ast := A \sqcup \{a^\ast\}$ a one-point extension of $A$ satisfying
%-------------------
\begin{equation}
d_{A^\ast}((u_0,b_0),a^\ast)=2 \delta 
\quad \text{and} \quad 
d_{A^*}((u_1,b_1),a^\ast)=3 \delta \,,
\end{equation}
%-----------------
and $\phi \colon A^\ast \to B$ the $1$-Lipschitz extension of $\pi|_A$ given by $\phi(a^\ast)=b_0$. Since $\pi$ is a Uryhson field, there is $u \in U$ such that
%----------------
\begin{equation}
d_{U \times B}((u,b_0),(u_0,b_0))= 2 \delta 
\quad \text{and} \quad
d_{U \times B}((u,b_0),(u_1,b_1))= 3 \delta \,.
\end{equation}
%----------------
Since $U \times B$ is endowed with the max metric, this implies that $d_U(u,u_0)= 2 \delta$ and $d_U(u,u_1)= 3 \delta$, which is a contradiction since $d_U(u_0,u_1) < \delta$.
\end{proof}

%----------------

\subsection{Characterization of Urysohn Fields}

\begin{definition}[Automorphisms and Matchings] \label{def:auto}
Let $\pi \colon X \to B$ be a $B$-field. 
\begin{enumerate}[(i)]
\item An {\em automorphism} of $\pi \colon X \to B$ is a bijective isometry $\psi \colon X \to X$ that satisfies $\pi \circ \psi = \pi$.
\item A {\em partial isometric matching} of $\pi \colon X \to B$ is a bijective isometry $\phi \colon A \to A'$ between  subspaces of $X$ that satisfies $\pi|_{A'} \circ \phi = \pi|_A$.
\item A partial isometric matching is said to be {\em finite} if $|A| = |A'|< \infty$.
\end{enumerate}
\end{definition}

\begin{proposition}\label{prop:homogeneous}
If $\pi\colon U \to B$ is a Uryhson field, then every finite partial isometric matching of $\pi$ can be extended to an automorphism of $\pi$.
\end{proposition}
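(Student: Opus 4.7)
The plan is a back-and-forth argument that parallels the uniqueness proof in Theorem~\ref{thm:urysohnmap}. Given a finite partial isometric matching $\phi \colon A \to A'$, fix countable dense sequences $\{u_n\}$ and $\{u'_n\}$ in $U$. I would inductively construct nested finite subsets $A = A_0 \subseteq A_1 \subseteq \cdots$ and $A' = A'_0 \subseteq A'_1 \subseteq \cdots$ of $U$, together with bijective isometries $\phi_n \colon A_n \to A'_n$ extending $\phi$ and satisfying $\pi|_{A'_n} \circ \phi_n = \pi|_{A_n}$, arranging that at odd steps the next unused point of $\{u_n\}$ is adjoined to the domain and at even steps the next unused point of $\{u'_n\}$ is adjoined to the codomain.

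For an odd step, let $x \in U$ be the next point to adjoin to $A_n$. Form the abstract one-point extension $A'_n \sqcup \{a^\ast\}$ with $d(a^\ast, \phi_n(a)) := d_U(x,a)$ for every $a \in A_n$. Because $\phi_n$ is an isometry, transport through $\phi_n$ identifies $A'_n \sqcup \{a^\ast\}$ with $A_n \cup \{x\} \subseteq U$, so this is a genuine metric one-point extension. Define $\phi^\ast \colon A'_n \sqcup \{a^\ast\} \to B$ by $\phi^\ast|_{A'_n} = \pi|_{A'_n}$ and $\phi^\ast(a^\ast) := \pi(x)$; the same transport shows $\phi^\ast$ agrees with $\pi|_{A_n \cup \{x\}}$, which is 1-Lipschitz. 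The Urysohn property then supplies $y \in U$ at the prescribed distances from $A'_n$ with $\pi(y) = \pi(x)$, so I set $A_{n+1} := A_n \cup \{x\}$, $A'_{n+1} := A'_n \cup \{y\}$, and $\phi_{n+1}(x) := y$. The even step is symmetric, exchanging the roles of $A_n$ and $A'_n$ and producing a preimage $z \in U$ for the next target point $u'_k$.

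Passing to the union, $\phi_\infty \colon \bigcup_n A_n \to \bigcup_n A'_n$ is a bijective isometry intertwining $\pi$, and the back-and-forth scheme ensures that both its domain and codomain are dense in $U$. Since $U$ is complete and $\pi$ is 1-Lipschitz, $\phi_\infty$ extends uniquely to a bijective isometry $\psi \colon U \to U$ satisfying $\pi \circ \psi = \pi$ and $\psi|_A = \phi$, which is the desired automorphism.

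The only delicate point is checking that, at each step, the abstract one-point extension and the associated map to $B$ really satisfy the hypotheses of the Urysohn property, but this is automatic via the isometric identification provided by $\phi_n$: the triangle inequality and the 1-Lipschitz condition on $\phi^\ast$ are inherited from the corresponding statements for $A_n \cup \{x\}$ inside $U$. The rest of the argument is bookkeeping to ensure surjectivity in the limit.
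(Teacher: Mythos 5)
Your proposal is correct and follows essentially the same route as the paper: the paper also extends the matching by a back-and-forth argument modeled on the uniqueness part of Theorem \ref{thm:urysohnmap}, enumerating countable dense sets that begin with $A$ and $A'$ and applying the one-point extension property of the Urysohn field at each step before passing to the completion. Your write-up just makes explicit the transport-of-structure check (that the abstract one-point extension is metric and the map to $B$ is 1-Lipschitz) that the paper leaves implicit.
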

\begin{proof}
Let $\phi \colon A \to A'$ be an isometric matching of $\pi \colon X \to B$, where $A=\{ x_1,\ldots,x_n\}$, $A'=\{x'_1,\ldots,x'_n \}$,  and $\phi(x_i)=x'_i$. Let $C=\{x_1,\ldots,x_n,x_{n+1},\ldots \}$ and $C'=\{x'_1,\ldots,x'_n,x'_{n+1},\ldots \}$ be countable  dense sets in $X$. The isometry extending $\phi$ is constructed as in the proof of the uniqueness part of Theorem \ref{thm:urysohnmap}.
\end{proof}

\begin{theorem}[Characterization of Uryhson fields]\label{thm:urysohncharacterization}
A $B$-field $\pi \colon U \to B$ is Urysohn if and only if it is universal and every finite isometric matching of $\pi$ can be extended to an automorphism of $\pi$.
\end{theorem}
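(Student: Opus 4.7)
The forward implication is immediate from results already established: Proposition \ref{prop:urysohnuniversal} says a Urysohn field is universal, and Proposition \ref{prop:homogeneous} says every finite partial isometric matching of a Urysohn field extends to an automorphism. So the only substantive direction is the converse.

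For the converse, assume $\pi \colon U \to B$ is universal and that every finite partial isometric matching extends to an automorphism. To verify the Urysohn property, fix a finite subspace $A = \{u_1,\ldots,u_n\} \subseteq U$, a one-point metric extension $A^\ast = A \sqcup \{a^\ast\}$, and a 1-Lipschitz map $\phi \colon A^\ast \to B$ with $\phi|_A = \pi|_A$. We must produce an isometric embedding $\imath \colon A^\ast \to U$ with $\imath|_A$ the inclusion and $\pi \circ \imath = \phi$. The first step is to view $\phi \colon A^\ast \to B$ as itself a $B$-field on a (finite) Polish space; by universality of $\pi$, there exists an isometric embedding $\jmath \colon A^\ast \to U$ satisfying $\pi \circ \jmath = \phi$. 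In general $\jmath|_A$ will not be the inclusion, but it lands in $U$ and it is a bijective isometry onto $\jmath(A)$.

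The key observation is that $\jmath|_A$ is a finite partial isometric matching in the sense of Definition \ref{def:auto}: for every $a \in A$ we have $\pi(\jmath(a)) = \phi(a) = \pi(a)$. Consider the inverse matching $(\jmath|_A)^{-1} \colon \jmath(A) \to A$, which is again a finite partial isometric matching. By hypothesis, it extends to an automorphism $\psi \colon U \to U$ with $\pi \circ \psi = \pi$ and $\psi(\jmath(a)) = a$ for all $a \in A$. Setting $\imath := \psi \circ \jmath \colon A^\ast \to U$ yields an isometric embedding (composition of an isometric embedding with a bijective isometry) such that $\imath(a) = a$ for $a \in A$ and $\pi \circ \imath = \pi \circ \psi \circ \jmath = \pi \circ \jmath = \phi$, finishing the argument.

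The proof is therefore a short two-step reduction; I expect no serious obstacle. The one subtlety worth emphasizing is the direction of the matching: one must extend the \emph{inverse} matching $\jmath(A) \to A$ (not $A \to \jmath(A)$), because it is the automorphism $\psi$ applied after $\jmath$ that needs to pull the image $\jmath(A)$ back onto $A$ pointwise while preserving the fiber structure of $\pi$. Otherwise everything is a direct application of the two hypotheses together with universality producing a candidate embedding and homogeneity correcting it on $A$.
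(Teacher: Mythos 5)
Your proposal is correct and follows essentially the same argument as the paper: universality produces an embedding $\jmath$ of $A^\ast$ with $\pi\circ\jmath=\phi$, and the finite-matching hypothesis supplies an automorphism that moves $\jmath(A)$ back onto $A$ pointwise. The only (immaterial) difference is that you extend the inverse matching $\jmath(A)\to A$ and take $\psi\circ\jmath$, whereas the paper extends the forward matching $A\to\jmath(A)$ and applies $\psi^{-1}$ to $\jmath(a^\ast)$; these coincide because the inverse of an automorphism of $\pi$ is again an automorphism of $\pi$.
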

%-----------------------
\begin{proof}
The ``only if'' part follows from Proposition \ref{prop:urysohnuniversal} and Proposition \ref{prop:homogeneous}. To prove the converse statement let $A=\{u_1,\ldots,u_n \}$ be a finite subspace of $U$, $A^\ast=A \sqcup \{a^\ast\}$ a one-point extension of $A$, and $\phi \colon A^\ast \to B$ a 1-Lipschitz map extending $\pi|_A$. Since $\pi \colon U \to B$ is universal, there exists an isometric embedding $\imath \colon A^\ast \to U$ such that $\phi=\pi \circ \imath$.

Let $u'_i := \imath (u_i)$, $1 \leq i \leq n$, and $A'=\{u'_1,\ldots,u'_n \}$. Then, $\imath_A \colon A \to A'$ is a finite isometric matching of $\pi \colon X \to B$. By assumption, there is an automorphism $\psi \colon X \to X$ extending this matching. Let $u=\psi^{-1}(\imath (a^\ast))$. Note that $\pi(u)=\pi(\psi(u))=\pi(\imath (a^\ast))
=\phi(a^\ast)$ and $d_U(u,u_i)=d_U(\psi(u),\psi(u_i))=d_U(\imath(a^\ast),\imath(u_i))=d_{A^\ast}(u_i,a^\ast)$, as desired.
\end{proof}

%------------------------------

\subsection{The Ultimate Urysohn Field }

\begin{definition}
A field $\pi \colon X \to B $ is called \textit{strongly universal} if $\pi$ is a $B$-universal field and $B$ is a universal space.
\end{definition}

\begin{definition} \label{def:uurysohn}
A field $\pi \colon U \to B$ is an {\em ultimate Urysohn field} if for any finite subspaces $A \subseteq U$ and $Z \subseteq B$ with $\pi(A) \subseteq Z$ and $1$-Lipschitz map $p \colon A^\ast \to Z^\ast$ between one-point extensions $A^\ast$  and $Z^\ast$ of $A$ and $Z$, respectively, there exist isometric embeddings $\imath \colon A^\ast \to U$ and $\jmath \colon Z^\ast \to B$ such that
\begin{enumerate}[(i)]
\item $\imath|_A$ and $\jmath|_Z$ are the inclusion maps;
\item $\pi \circ \imath = \jmath \circ p$\,; that is, the diagram
\[
\begin{tikzcd}[column sep=large]
A^\ast \arrow[d, "p"'] \arrow[r, hook,  "\imath"] & U \arrow[d, "\pi"] \\
Z^\ast \arrow[r, hook, "\jmath"'] & B
\end{tikzcd}
\]
is commutative.
\end{enumerate}
\end{definition}

\begin{definition} \label{def:matching}
Let $\pi \colon X \to B$ be a field. 
\begin{enumerate}[(i)]
\item A {\em double automorphism} of $\pi \colon X \to B$ is a pair of bijective isometries $\varphi \colon X \to X$ and $\psi \colon B \to B$ such that the diagram
\[
\begin{tikzcd}[column sep=large]
X  \arrow[r, "\varphi"] \arrow[d, "\pi"'] & X \arrow[d, "\pi"] \\
 B \arrow[r, "\psi"'] & B
\end{tikzcd}
\]
is commutative.

\item An {\em isometric double matching} of $\pi \colon X \to B$ is a pair of bijective isometries $\phi \colon A \to A'$ between subspaces of $X$ and $\theta \colon Z \to Z'$ between subspaces of $B$ such that $\pi(A) \subseteq Z$, $\pi(A') \subseteq Z'$ and the diagram
\[
\begin{tikzcd}[column sep=large]
A  \arrow[r, "\phi"] \arrow[d, "\pi|_A"'] & A' \arrow[d, "\pi|_{A'}"] \\
 Z \arrow[r, "\theta"'] & Z'
\end{tikzcd}
\]
commutes.

\item  An isometric double matching is called {\em finite} if the isometries $\phi$ and $\theta$ are between finite subspaces.
\end{enumerate}
\end{definition}

\begin{theorem}[Characterization of Ultimate Urysohn Fields]\label{thm:ultimateUrysohnCharacterization}
Let $\pi \colon U \to B$ be a field. The following statements are equivalent:
\begin{enumerate}[\rm (i)]
    \item $\pi$ is an ultimate Urysohn field.
    \item $\pi$ is a Urysohn $B$-field and $B$ is a Urysohn space.
    \item $\pi$ is a strongly universal field and every finite isometric double matching of $\pi$ can be extended to a double automorphism of $\pi$.
\end{enumerate}
\end{theorem}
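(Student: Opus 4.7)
The plan is to establish (i) $\Leftrightarrow$ (ii) and (ii) $\Leftrightarrow$ (iii) separately, using condition (ii) as the natural bridge that factorizes the ultimate Urysohn property into a Urysohn property for the base $B$ and a relative Urysohn property for $\pi$ over $B$.

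For (ii) $\Rightarrow$ (i), I would take the input data $(A, Z, A^\ast, Z^\ast, p)$ of Definition \ref{def:uurysohn} and first apply the Urysohn property of $B$ to find a point $b \in B$ realizing the distances $d_{Z^\ast}(z, z^\ast)$ from $Z$; this determines an isometric embedding $\jmath \colon Z^\ast \to B$ by setting $\jmath|_Z$ to be the inclusion and $\jmath(z^\ast) = b$. The composition $\jmath \circ p \colon A^\ast \to B$ is then a 1-Lipschitz extension of $\pi|_A$, so the Urysohn-$B$-field property of $\pi$ produces an $\imath \colon A^\ast \to U$ with $\pi \circ \imath = \jmath \circ p$ and $\imath|_A$ the inclusion. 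For the converse (i) $\Rightarrow$ (ii), universality of $\pi$ implies it is a retraction (Remark \ref{rem:universalRetract}) via some $\imath_0 \colon B \hookrightarrow U$, which lets me lift any finite $Z \subseteq B$ to a finite $A \subseteq U$ and reduce both factored properties to instances of ultimate Urysohn. In particular, for the $\pi$-Urysohn half, given $A$ and $\phi \colon A^\ast \to B$, I would take $Z = \pi(A) \cup \{\phi(a^\ast)\}$, choose any valid one-point extension $Z^\ast$ (for instance with all distances $d_{Z^\ast}(z, z^\ast) = \mathrm{diam}(Z)+1$, which is irrelevant to the argument), and set $p(a^\ast) := \phi(a^\ast) \in Z$; the ultimate Urysohn embedding $\jmath$ then fixes $\phi(a^\ast)$ automatically, forcing $\pi \circ \imath(a^\ast) = \phi(a^\ast)$.

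For (ii) $\Rightarrow$ (iii), strong universality follows immediately from Proposition \ref{prop:urysohnuniversal} together with the universality of $B$. To extend a finite double matching $(\phi, \theta)$ to a double automorphism $(\varphi, \psi)$, I would first extend $\theta$ to a bijective isometry $\psi \colon B \to B$ via Urysohn-homogeneity of $B$, then perform the back-and-forth construction from the uniqueness proof of Theorem \ref{thm:urysohnmap}(ii) between countable dense sequences in $U$: at each induction step I invoke the Urysohn-$B$-field property of $\pi$ to produce a matched point whose $\pi$-image is the prescribed value $\psi \circ \pi$, verifying that the induced one-point extension is 1-Lipschitz because $\psi$ is an isometry. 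For the reverse (iii) $\Rightarrow$ (ii), Urysohnicity of $B$ follows by lifting any finite isometry $\theta$ on $B$ to a matching $\phi := \imath_0 \circ \theta \circ \pi|_A$ of $\pi$ via the retraction and reading off the $\psi$-component of the resulting double automorphism. For the $\pi$-Urysohn direction, given $\phi \colon A^\ast \to B$ extending $\pi|_A$, I would use $B$-universality to obtain an isometric embedding $\imath' \colon A^\ast \to U$ with $\pi \circ \imath' = \phi$, and then invoke the double-matching extension with $Z = Z' := \pi(A) \cup \{\phi(a^\ast)\}$ and $\theta = \mathrm{id}_Z$ to turn $\imath'|_A \colon A \to \imath'(A)$ into a double automorphism $(\varphi, \psi)$; the desired embedding is $\varphi^{-1} \circ \imath'$.

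The key subtlety, and the main obstacle, lies in this last step: applying the double-matching extension naively yields $\pi \circ (\varphi^{-1} \circ \imath') = \psi^{-1} \circ \phi$ rather than $\phi$, so equality of maps requires $\psi$ to fix not just $\pi(A)$ but also $\phi(a^\ast)$. This is exactly why $Z$ must be enlarged to include $\phi(a^\ast)$ before invoking (iii); the same \emph{enlarge-and-freeze} trick governs the choice of $Z$ in the (i) $\Rightarrow$ (ii) direction. Beyond this, the remaining verifications (validity of the dummy one-point extensions, 1-Lipschitz checks, and compatibility of each back-and-forth step) are essentially routine once the diagrams are set up correctly.
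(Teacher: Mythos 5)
Your proposal is correct, but it routes the equivalences differently than the paper. The paper proves the cycle (i)$\Rightarrow$(ii)$\Rightarrow$(iii)$\Rightarrow$(i): in (i)$\Rightarrow$(ii) it treats the Urysohn-$B$-field property of $\pi$ as immediate from Definition \ref{def:uurysohn} and proves $B$ Urysohn by taking $A=\emptyset$, $A^\ast=\{a^\ast\}$, $p(a^\ast)=z^\ast$; its (ii)$\Rightarrow$(iii) is essentially identical to yours (extend $\theta$ to $\tilde\theta$ by homogeneity of $B$, observe $\tilde\theta\circ\pi$ is again a Urysohn $B$-field, and run the back-and-forth from Theorem \ref{thm:urysohnmap}); and its (iii)$\Rightarrow$(i) is done in one shot, embedding $Z^\ast$ into $B$ by universality of $B$, then $A^\ast$ into $U$ over $B$ by universality of $\pi$ applied to the field $\jmath\circ p$, and finally correcting both embeddings simultaneously by a double automorphism extending the double matching $(\imath|_A,\jmath|_Z)$ --- the same ``conjugate by the extended automorphism'' move you use, but performed on $U$ and $B$ at once so that no enlargement of $Z$ is needed there. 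You instead prove (i)$\Leftrightarrow$(ii) and (ii)$\Leftrightarrow$(iii), which has two consequences: your (i)$\Rightarrow$(ii) supplies the enlarge-and-freeze argument ($Z=\pi(A)\cup\{\phi(a^\ast)\}$, dummy extension $Z^\ast$, $p(a^\ast):=\phi(a^\ast)\in Z$) that the paper compresses into ``by definition,'' which is a genuine gain in explicitness; on the other hand, your (iii)$\Rightarrow$(ii) needs two ingredients the paper's cycle avoids, namely the classical characterization of the Urysohn space ($B$ universal plus finite homogeneity, read off from the $\psi$-components via the retraction or the empty matching, implies $B$ Urysohn --- Theorem \ref{thm:urysohncharacterization} with a one-point target, or the standard one-point-extension argument) and the enlargement $Z=Z'=\pi(A)\cup\{\phi(a^\ast)\}$ with $\theta=\mathrm{id}_Z$ so that $\psi$ fixes $\phi(a^\ast)$; you correctly identify this last point as the crux, and the verification you sketch ($\pi\circ\varphi^{-1}=\psi^{-1}\circ\pi$, with $\psi^{-1}$ fixing $\phi(a^\ast)$) goes through. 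Minor ordering caveat: in your (i)$\Rightarrow$(ii) you should establish the Urysohn-$B$-field property (hence universality of $\pi$, via Proposition \ref{prop:urysohnuniversal}) before invoking the retraction of Remark \ref{rem:universalRetract} to lift $Z$ --- or simply use the paper's $A=\emptyset$ shortcut, which needs no retraction at all.
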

\begin{proof}
$\text{(i)} \dto \text{(ii)}$. By definition, $\pi$ is a Uryhson field, so we need to show that $B$ is a Urysohn space. Let $Z$ be a finite subset of $B$ and  $Z^\ast = Z \sqcup \{z^\ast\}$ a one-point extension of $Z$. Set $A = \emptyset \subseteq U$ and let $A^\ast =\{a^\ast\}$ be a one-point extension of $A$. Define $p \colon A^\ast \to Z^\ast$ by $p (a^\ast) = z^\ast$, which is clearly 1-Lipschitz. Since $\pi$ is an ultimate Uryhson field, there is an isometric embedding $\jmath \colon Z^\ast \to B$ extending the inclusion $Z \hookrightarrow B$ satisfying the additional conditions of Definition \ref{def:uurysohn}. Thus, $B$ is a Urysohn space.

$\text{(ii)} \dto \text{(iii)}$. The strong universality of $\pi$ follows from Proposition \ref{prop:urysohnuniversal}. With the notation of Definition \ref{def:matching}, let a finite isometric double matching of $\pi\colon X \to B$ be given by a pair of isometries $\phi \colon A \to A'$ and $\theta \colon Z \to Z'$. Since $B$ is a Urysohn space, $\theta$ can be extended to a bijective isometry $\tilde{\theta} \colon B \to B$. Since $\pi$ is a Uryhson field and $\tilde{\theta} \colon B \to B$ is an  isometry, $\pi'\colon \tilde{\theta}\circ \pi \colon U \to B$ also is a Uryhson field. As in the proof of the uniqueness part of Theorem \ref{thm:urysohnmap}, we can extend the isometry $\phi \colon A \to A'$ (which satisfies $\pi'|_A=\pi \circ \phi$) to a bijective isometry $\tilde{\phi} \colon U \to U$ such that $\pi \circ \tilde{\phi} = \pi'$. The pair $(\tilde{\phi}, \tilde{\theta})$ gives the desired double automorphism of $\pi \colon U \to B$.

$\text{(iii)} \dto \text{(i)}$. Let $A \subseteq U$ and $Z \subseteq B$ be finite subspaces with $\pi(A) \subseteq Z$ and let $p \colon A^\ast  \to Z^\ast$ be a 1-Lipschitz map between one-point extensions of $A$ and $Z$ such that $p|_A = \pi|_A$. Since $B$ is a universal space, there exists an isometric embedding $\jmath \colon Z^\ast \to B$. The universality of the map $\pi \colon U \to B$ implies that there exists an isometric embedding $\imath \colon A^\ast \to U$ such that $\pi \circ \imath = \jmath \circ p$.

Let $A'= \imath(A)$ and $Z'= \jmath(Z)$. Then, the maps $\imath|_A \colon A \to A'$  and 
$\jmath|_A \colon Z \to Z'$ yield a finite isometric double matching of $\pi \colon U \to B$, which extends to a 
double automorphism $\tilde{\imath} \colon U \to U$ and $\tilde{\jmath} \colon B \to B$. The maps
$\tilde{\imath}^{-1} \circ \imath \colon A^\ast \to U$ and $\tilde{\jmath}^{-1} \circ \jmath \colon Z^\ast \to B$
give the desired isometric embeddings that extend the inclusions $A \subseteq U$ and $Z \subseteq B$, 
respectively. The commutativity of the diagram
\begin{equation}
\begin{tikzcd}[column sep=large]
A^\ast \ar[d, "p"'] \ar[r, hook, "\tilde{\imath}^{-1} \circ \, \imath"]& U \ar[d, "\pi"] \\
Z^* \ar[r, hook, "\tilde{\jmath}^{-1} \circ \, \jmath"'] & B
\end{tikzcd}
\end{equation}
follows from the construction.
\end{proof}

%----------------------%----------------------

\section{Functional Gromov-Hausdorff Distance}\label{sec:mfields}

The primary goal of this section is to introduce a moduli space of compact fields equipped with a Gromov-Hausdorff distance and show how it relates to the Hausdorff distance between compact subfields of the Urysohn field, up to the action of automorphisms.

\begin{definition} 
Let $\mm{X}=(X,B,\pi_X)$ and $\mm{Y}=(Y,B,\pi_Y)$ be $B$-fields. 
\begin{enumerate}[(i)]
\item A mapping from $\mm{X}$ to $\mm{Y}$ over $B$, sometimes denoted $\Phi \colon \mm{X} \dto \mm{Y}$,  consists of a 1-Lipschitz mapping $\phi \colon X \to Y$ such that the diagram
\[
\begin{tikzcd}
X \arrow[rd, "\pi_X"'] \arrow[rr,  "\phi"] & & Y \arrow[ld, "\pi_Y"] \\
& B &
\end{tikzcd}
\]
commutes. We adopt the convention that uppercase letters denote maps between fields and the corresponding lowercase letters denote the associated mapping between their domains.
\item $\Phi \colon \mm{X} \dto \mm{Y}$ is an {\em isometric embedding} if $\phi: X\to Y$ is an isometric embedding of metric spaces.
\item $\Phi \colon \mm{X} \dto \mm{Y}$ is an {\em isometry} if $\phi: X \to Y$ is a bijective isometry. Moreover, we say that $\mm{X}$ and $\mm{Y}$ are {\em isometric}, denoted by $\mm{X} \simeq \mm{Y}$, if there is an isometry $\Phi$ between them. If $\mm{X} = \mm{Y}$, we refer to an isometry as an {\em automorphism} of $\mm{X}$.
\end{enumerate}
\end{definition}

\begin{definition} (Gromov-Hausdorff Distance for Fields) \label{def: functional GHB distance}
Let $\mm{X}=(X,B,\pi_X)$ and $\mm{Y}=(Y,B,\pi_Y)$ be compact $B$-fields. The {\em Gromov-Hausdorff distance} is defined by
\begin{equation*}
    \dghb(\mm{X}, \mm{Y}) := \inf_{Z,\Phi,\Psi} d^Z_H(\phi(X),\psi(Y)),
\end{equation*}
where the infimum is taken over all $B$-fields $\mm{Z}$ and isometric embeddings $\Phi \colon \mm{X} \dto \mm{Z}$ and $\Psi: \mm{Y} \dto \mm{Z}$ over $B$. 
\end{definition}
%-------------------
\begin{remark}
\
\begin{enumerate}[(i)]
\item The definition of $\dghb$ only involves the Hausdorff distance between the domains of the functions once embedded in $\mm{Z}$. This is due to the fact that fields are 1-Lipschitz functions, so that differences in function values are bounded by distances in the domain.
\item Similar to the case of compact metric spaces, the space $\fmm{F}_B$ of isometry classes of compact $B$-fields with the Gromov-Hausdorff distance is Polish (\cite{burago2001course}, Sec. 7.3). 
\end{enumerate}
\end{remark}
%-----------------------
Henceforth, we denote the Urysohn field over $B$ by $\mm{U}_B = (U, B, \pi_{U})$. The existence and uniqueness of $\mm{U}_B$, up to isometry,  is guaranteed by Theorem \ref{thm:urysohnmap}. An important step in relating the Gromov-Hausdorff distance to the Hausdorff distance in $\mm{U}_B$ is to show that any isometry between compact subfields of $\mm{U}_B$ extends to an automorphism of $\mm{U}_B$. We begin with the following one-point extension lemma.

\begin{lemma}\label{compact injectivity}
Let $\mm{U}_B$ be a Urysohn field over $B$, $A \subseteq U$ a compact subset, and $f \colon A^\ast \to B$ a field defined on a one-point metric extension $A^\ast= A \sqcup \{a^\ast\}$ satisfying $f_A = \pi|_A$. Then, there exists an isometric embedding $\imath \colon A^\ast \to U$ over $B$ such that the restriction $\imath|_A$ is the  inclusion map and $\pi \circ \imath = f$. 
\end{lemma}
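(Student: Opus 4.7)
The plan is to bootstrap the finite one-point extension property from the definition of a Urysohn field to the compact case by constructing a Cauchy sequence $(y_n)$ in $U$ whose limit realizes the desired extension. Set $g\colon A \to \real$ by $g(a) := d_{A^\ast}(a,a^\ast)$ and $b_0 := f(a^\ast)$. The triangle inequality in $A^\ast$ gives $g \in \Delta^1(A)$, and $f$ being $1$-Lipschitz gives $d_B(b_0, \pi(a)) \leq g(a)$ for all $a\in A$. Fix a nested sequence $A_0 \subseteq A_1 \subseteq \cdots$ of finite subsets of $A$ with each $A_n$ a $2^{-n}$-net of $A$, so that $A_\infty := \bigcup_n A_n$ is dense in $A$.

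I would construct $y_n \in U$ inductively so that $d_U(y_n, a) = g(a)$ for all $a \in A_n$ and $\pi(y_n) = b_0$. The base case $y_0$ comes from applying the Urysohn extension property to the one-point metric extension $A_0 \sqcup \{a^\ast\}$ with distances $g|_{A_0}$ and the $1$-Lipschitz map $a^\ast \mapsto b_0$ to $B$. Given $y_n$, set
\[
\delta_n := \max_{a \in A_{n+1}} \bigl|g(a) - d_U(y_n, a)\bigr|,
\]
and consider the finite set $C_n := A_{n+1} \cup \{y_n\}$ together with the one-point extension $C_n \sqcup \{c^\ast\}$ defined by $d(c^\ast, a) := g(a)$ for $a \in A_{n+1}$, $d(c^\ast, y_n) := \delta_n$, and the extension of $\pi|_{C_n}$ by $c^\ast \mapsto b_0$. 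Applying the Urysohn property to this data produces $y_{n+1} \in U$ realizing the prescribed distances.

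The two verifications at the inductive step are that $C_n \sqcup \{c^\ast\}$ is a legitimate one-point metric extension and that the extended map to $B$ is $1$-Lipschitz. The non-trivial triangle inequalities on $C_n \sqcup \{c^\ast\}$ are those pairing $y_n$ with an arbitrary $a \in A_{n+1}$, and they amount to requiring
\[
\delta_n \in \Bigl[\max_a \bigl|g(a) - d_U(y_n, a)\bigr|,\; \min_a \bigl(g(a) + d_U(y_n, a)\bigr)\Bigr].
\]
Non-emptiness of this interval follows from $g \in \Delta^1(A)$ together with the triangle inequality in $U$ (applied with $y_n$ inserted), and $\delta_n$ was chosen to be its left endpoint. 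The $B$-compatibility check reduces to $d_B(b_0, \pi(a)) \leq g(a)$ and $d_B(b_0, \pi(y_n)) = 0 \leq \delta_n$, both already in hand.

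The key quantitative estimate is $\delta_n \leq 2^{1-n}$: for $a \in A_n$ the corresponding term vanishes since $d_U(y_n, a) = g(a)$, while for $a \in A_{n+1}\setminus A_n$, picking $a' \in A_n$ with $d_U(a,a') \leq 2^{-n}$ gives
\[
\bigl|g(a) - d_U(y_n, a)\bigr| \leq \bigl|g(a) - g(a')\bigr| + \bigl|d_U(y_n, a') - d_U(y_n, a)\bigr| \leq 2 \cdot 2^{-n}.
\]
Hence $(y_n)$ is Cauchy and converges to some $x \in U$ by completeness. Continuity of $\pi$ gives $\pi(x) = b_0 = f(a^\ast)$, and for $a \in A_\infty$ we have $d_U(y_n, a) = g(a)$ for all large $n$, hence $d_U(x, a) = g(a)$; density of $A_\infty$ in $A$ and continuity extend this equality to all of $A$. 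The map $\imath\colon A^\ast \to U$ defined by the inclusion on $A$ together with $\imath(a^\ast) := x$ is then the desired isometric embedding with $\pi \circ \imath = f$. The main obstacle is the admissibility of the $\delta_n$ just defined, that is, that the interval above is non-empty; this is the only place where the hypothesis $g \in \Delta^1(A)$ is used essentially.
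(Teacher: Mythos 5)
Your proof is correct and follows essentially the same route as the paper's: iterate the finite one-point extension property over finite nets of $A$ augmented by the previously constructed point, taking the new distance to that point to be $\max_{a}|g(a)-d_U(y_n,a)|$ (the paper's $d'_\ast(a^\ast,u_n)$ is exactly your $\delta_n$), and pass to the limit of the resulting Cauchy sequence. If anything, you are slightly more explicit than the paper in verifying that the auxiliary one-point extension is admissible (the non-empty interval condition using $g\in\Delta^1(A)$), which the paper leaves implicit.
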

%----------------------
\begin{proof}
A proof can be found in Appendix \ref{sec:appendix}.
\end{proof}
%---------------------
\begin{proposition}\label{compact homogeneity}
If $\mm{U}_B$ is a Urysohn field and $A, B \subseteq U$ are compact subsets, then any partial isometric matching $\phi \colon A \to B$ (see Definition \ref{def:auto}) of $\mm{U}_B$ extends to an automorphism of $\mm{U}_B$.
\end{proposition}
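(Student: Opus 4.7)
The plan is to generalize the back-and-forth argument from the uniqueness proof of Theorem \ref{thm:urysohnmap}, but with Lemma \ref{compact injectivity} (compact injectivity) replacing the finite one-point extension property. To avoid the notational conflict with the target space $B$, relabel the given compact subsets as $A_0$ and $A_0'$ and write $\phi_0 := \phi \colon A_0 \to A_0'$. Fix countable dense subsets $\{u_n\}_{n \geq 1}$ and $\{v_n\}_{n \geq 1}$ of $U$.

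Inductively, I would assume that a partial isometric matching $\phi_n \colon A_n \to A_n'$ of $\mm{U}_B$ with $A_n, A_n' \subseteq U$ compact and extending $\phi_0$ has been constructed, and build $\phi_{n+1}$ by one forward and one backward step. For the forward step, form the abstract one-point metric extension $(A_n')^\ast := A_n' \sqcup \{a^\ast\}$ by setting $d(a^\ast, \phi_n(a)) := d_U(u_{n+1}, a)$ for each $a \in A_n$; the triangle inequality on $(A_n')^\ast$ is inherited from $U$ because $\phi_n$ is an isometry and $u_{n+1}$ actually lives in $U$. Define a field $f \colon (A_n')^\ast \to B$ by $f|_{A_n'} := \pi_U|_{A_n'}$ and $f(a^\ast) := \pi_U(u_{n+1})$; the $1$-Lipschitz condition reduces to $d_B(\pi_U(u_{n+1}), \pi_U(a)) \leq d_U(u_{n+1}, a)$, which holds since $\pi_U$ is $1$-Lipschitz. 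Lemma \ref{compact injectivity} then yields an isometric embedding of $(A_n')^\ast$ into $U$ over $B$ fixing $A_n'$; let $v_{n+1}^\ast$ denote the image of $a^\ast$ and extend $\phi_n$ by $u_{n+1} \mapsto v_{n+1}^\ast$. The backward step is symmetric: apply the same construction to the inverse matching to enlarge the range so as to contain $v_{n+1}$, producing $\phi_{n+1} \colon A_{n+1} \to A_{n+1}'$ with $A_{n+1} \supseteq A_n \cup \{u_{n+1}\}$ and $A_{n+1}' \supseteq A_n' \cup \{v_{n+1}\}$.

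Setting $\phi_\infty := \bigcup_n \phi_n$ then gives a bijective isometry between $\bigcup_n A_n$ and $\bigcup_n A_n'$ that commutes with $\pi_U$ and whose domain and range are both dense in $U$ (by the density of $\{u_n\}$ and $\{v_n\}$). Since $\phi_\infty$ is $1$-Lipschitz and $U$ is complete, $\phi_\infty$ extends uniquely to a $1$-Lipschitz map $\Phi \colon U \to U$ satisfying $\pi_U \circ \Phi = \pi_U$. Extending the inverse of $\phi_\infty$ in the same way produces a $1$-Lipschitz map $\Psi \colon U \to U$ with $\Psi \circ \Phi = \mathrm{id}$ and $\Phi \circ \Psi = \mathrm{id}$ on dense subsets, hence everywhere; thus $\Phi$ is a bijective isometry, i.e.\ an automorphism of $\mm{U}_B$ extending $\phi$.

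The main obstacle is already absorbed into Lemma \ref{compact injectivity}; the rest is careful bookkeeping in the back-and-forth. The only genuinely delicate point in each inductive step is checking that the candidate one-point metric extension together with the proposed field value is a bona fide $B$-field, but both the triangle inequality and the $1$-Lipschitz compatibility with $\pi_U$ follow directly from the ambient data in $U$.
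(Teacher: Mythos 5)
Your proposal is correct and takes essentially the same route as the paper: a back-and-forth construction driven by the compact one-point extension property of Lemma \ref{compact injectivity}, modeled on the uniqueness argument in Theorem \ref{thm:urysohnmap}, followed by extending the resulting isometry between dense subsets to all of $U$ by completeness. The paper only sketches this; your write-up fills in the details (in particular the check that each abstract one-point extension, with $\pi_U(u_{n+1})$ as the prescribed field value, is a genuine $B$-field, which uses $\pi_U\circ\phi_n=\pi_U|_{A_n}$), so there is no gap.
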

\begin{proof}
Let $C=\{x_1, x_2, \ldots \}$ and $C'=\{x'_1, x'_2, \ldots\}$ be countable dense sets in $U\setminus A$ and $U \setminus B$, respectively. Using the one-point compact extension property established in Lemma \ref{compact injectivity} and a back-and-forth argument applied to $C$ and $C'$, as in the proof of the uniqueness part of Theorem \ref{thm:urysohnmap}, $\phi$ can be extended to an isometry of $\mm{U}_B$.
\end{proof}

%-------------------
\begin{lemma} \label{GH in U}
Let $\mm{X}=(X,B,\pi_X)$ and $\mm{Y}=(Y,B,\pi_Y)$ be compact $B$-fields and $\mm{U}_B=(U,B,\pi_U)$ be the Urysohn $B$-field. Then,
\begin{equation*}
    \dghb(\mm{X}, \mm{Y}) = \inf_{\Phi,\Psi} d^U_H(\phi(X),\psi (Y)),
\end{equation*}
where the infimum is taken over all isometric embeddings $\Phi\colon \mm {X} \dto \mm{U}_B$ and $\Psi \colon \mm{Y} \dto \mm{U}_B$ over $B$.
\end{lemma}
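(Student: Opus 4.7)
The plan is to prove the two inequalities separately, with one being trivial and the other using the universality of the Urysohn field.

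For the easy direction, $\dghb(\mm{X},\mm{Y}) \leq \inf_{\Phi,\Psi} d_H^U(\phi(X),\psi(Y))$, note that $\mm{U}_B$ is itself a $B$-field, so any pair of isometric embeddings $\Phi \colon \mm{X} \dto \mm{U}_B$ and $\Psi \colon \mm{Y} \dto \mm{U}_B$ over $B$ is an admissible choice in Definition \ref{def: functional GHB distance}. Hence the infimum on the right, taken over the restricted class where $\mm{Z} = \mm{U}_B$, is at least the infimum in the definition of $\dghb(\mm{X},\mm{Y})$.

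For the reverse inequality, I would start with an arbitrary admissible triple $(\mm{Z}, \Phi, \Psi)$ appearing in the infimum defining $\dghb(\mm{X},\mm{Y})$, where $\mm{Z} = (Z,B,\pi_Z)$ and $\Phi \colon \mm{X} \dto \mm{Z}$, $\Psi \colon \mm{Y} \dto \mm{Z}$ are isometric embeddings over $B$. The key observation is that $W := \phi(X) \cup \psi(Y) \subseteq Z$ is compact (as a finite union of compact sets), hence Polish. Restricting the projection gives a $B$-field $\mm{W} = (W, B, \pi_Z|_W)$, and $\Phi, \Psi$ factor through isometric embeddings of $\mm{X}, \mm{Y}$ into $\mm{W}$ over $B$.

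By Proposition \ref{prop:urysohnuniversal}, the Urysohn $B$-field $\mm{U}_B$ is $B$-universal, so there exists an isometric embedding $\mathrm{I} \colon \mm{W} \dto \mm{U}_B$ over $B$, i.e.\ a map $\iota \colon W \to U$ with $\pi_U \circ \iota = \pi_Z|_W$. The compositions $\mathrm{I} \circ \Phi$ and $\mathrm{I} \circ \Psi$ are isometric embeddings of $\mm{X}$ and $\mm{Y}$ into $\mm{U}_B$ over $B$, and since $\iota$ is an isometry onto its image,
\begin{equation*}
d_H^U(\iota(\phi(X)), \iota(\psi(Y))) = d_H^Z(\phi(X), \psi(Y)).
\end{equation*}
Taking the infimum over all $(\mm{Z},\Phi,\Psi)$ yields $\inf_{\Phi,\Psi} d_H^U(\phi(X),\psi(Y)) \leq \dghb(\mm{X},\mm{Y})$, completing the proof.

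The argument is essentially routine once universality is in hand; the only minor point to get right is the reduction to a compact subfield $\mm{W} \subseteq \mm{Z}$, which is needed because Proposition \ref{prop:urysohnuniversal} embeds $B$-fields over Polish domains (we do not want to worry about $Z$ being Polish or not in the definition of $\dghb$, and in any case only the images of $X$ and $Y$ matter for the Hausdorff distance). Note that stronger results (such as Lemma \ref{compact injectivity} or Proposition \ref{compact homogeneity}) are not required here, since we only need to embed one compact subfield isometrically, not extend an existing partial isometry.
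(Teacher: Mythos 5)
Your proof is correct and follows essentially the same route as the paper: both directions are handled identically, with the reverse inequality obtained by using the universality of $\mm{U}_B$ (Proposition \ref{prop:urysohnuniversal}) to push a near-optimal embedding configuration into the Urysohn field while preserving the Hausdorff distance. Your intermediate restriction to the compact subfield $\mm{W}=\phi(X)\cup\psi(Y)$ is a harmless extra step — in the paper's Definition \ref{def: functional GHB distance} the space $\mm{Z}$ is already a $B$-field, hence Polish, so universality applies to $\mm{Z}$ directly.
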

\begin{proof}
The inequality $ \dghb(\mm{X}, \mm{Y}) \leq \inf_{\Phi,\Psi} d^U_H(\phi(X),\psi (Y))$ follows from the definition of $\dghb$. To prove the reverse inequality, let $\epsilon >0$. We show that there are isometric embeddings  $\Phi \colon \mm{X} \dto \mm{U}_B$ and $\Psi \colon \mm{Y} \dto \mm{U}_B$ such that 
\begin{equation}
d^U_H(\phi(X), \psi (Y)) \leq \dghb(\mm{X}, \mm{Y}) +\epsilon \,.
\end{equation}
By definition of the Gromov-Hausdorff distance, there is a $B$-field $\mm{Z}$ and isometric embeddings $\Phi' \colon \mm{X} \dto \mm{Z}$ and $\Psi' \colon \mm{Y} \dto \mm{Z}$ over $B$ such that
\begin{equation}
d^Z_H(\phi'(X), \psi'(Y)) \leq \dghb(\mm{X},\mm{Y}) +\epsilon \,.
\end{equation}
By the universality of $\mathscr{U}_B$, there is an isometric embedding $\Lambda \colon \mm{Z} \dto \mm{U}_B$ over $B$. Letting $\Phi =\Lambda\circ\Phi'$ and $\Psi=\Lambda\circ\Psi'$, we have
\begin{equation} \label{eq:ghineq}
d^U_H(\phi(X), \psi(Y)) = d^Z_H(\phi'(X), \psi'(Y)) \leq \dghb (\mm{X},\mm{Y}) +\epsilon \,.
\end{equation}
Clearly, the same inequality holds if the left-hand side of \eqref{eq:ghineq} is replaced with the infimum over $\Phi$ and $\Psi$. Taking the limit as $\epsilon \to 0$, we obtain the desired inequality.
\end{proof}
%---------------------

Given an equivalence relation $\sim$ on a metric space $(X,d_X)$, the quotient metric is the maximal (pseudo) metric on $X/\!\!\sim$ that makes the quotient map $\pi \colon X \to X/\!\!\sim$ 1-Lipschitz. Let $F(\mm{U}_B)$ be the space of compact subfields of $\mm{U}_B = (U,B,\pi_U)$, equipped with the Hausdorff distance, and $Aut(B)$ the automorphism group of $\mm{U}_B$, which acts on $\mm{U}_B$ by isometries. On $F(\mm{U}_B)/Aut(B)$, by \cite[Lemma~3.3.6]{burago2001course}, the quotient metric may be expressed as
\begin{equation} \label{eq:autb}
d_F^B (\mm{X},\mm{Y}) =  \inf_{\Phi, \Psi \in Aut(B)} d_H^U (\phi (X),\psi (Y))) = \inf_{\Psi \in Aut(B)} d_H^U (X,\psi(Y)).
\end{equation}

\begin{theorem} \label{thm: F_B = F_I}
The moduli space $(\fmm{F}_B, \dghb)$ of isometry classes of compact $B$-fields, equipped with the Gromov-Hausdorff distance, is isometric to the quotient space $(F(\mm{U}_B)/Aut(B), d_F^B)$.
\end{theorem}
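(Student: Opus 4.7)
The plan is to define a natural map $\Theta \colon F(\mm{U}_B)/Aut(B) \to \fmm{F}_B$ that sends the $Aut(B)$-orbit of a compact subfield $\mm{X} \subseteq \mm{U}_B$ to its isomorphism class as an abstract compact $B$-field, and then to verify that $\Theta$ is a distance-preserving bijection. Well-definedness is immediate: any $\alpha \in Aut(B)$ is a bijective isometry of $U$ over $B$, so restricting $\alpha$ to a compact subfield produces an isomorphic $B$-field. Surjectivity follows from the universality of the Urysohn field established in Proposition \ref{prop:urysohnuniversal}: every compact $B$-field admits an isometric embedding into $\mm{U}_B$ over $B$, whose image is a compact subfield in its class.

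For injectivity, I would argue as follows. If $\mm{X}, \mm{X}' \subseteq \mm{U}_B$ are compact subfields and $\Phi \colon \mm{X} \dto \mm{X}'$ is an isomorphism over $B$, then $\phi \colon X \to X'$ is a partial isometric matching of $\mm{U}_B$ between compact subsets of $U$. Proposition \ref{compact homogeneity} extends $\phi$ to an automorphism of $\mm{U}_B$, which is an element of $Aut(B)$ mapping $X$ onto $X'$. Hence $[\mm{X}] = [\mm{X}']$ in $F(\mm{U}_B)/Aut(B)$.

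The heart of the proof is verifying that $\Theta$ is isometric. By Lemma \ref{GH in U}, for compact subfields $\mm{X}, \mm{Y} \subseteq \mm{U}_B$,
\[
\dghb(\mm{X}, \mm{Y}) = \inf_{\Phi, \Psi} d_H^U(\phi(X), \psi(Y)),
\]
with the infimum taken over isometric embeddings $\Phi \colon \mm{X} \dto \mm{U}_B$ and $\Psi \colon \mm{Y} \dto \mm{U}_B$ over $B$. The key observation is that, viewing $X$ as a subfield via inclusion, the set of possible images $\phi(X)$ as $\Phi$ varies over such embeddings coincides exactly with the $Aut(B)$-orbit of $X$ in $F(\mm{U}_B)$. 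Indeed, any embedding $\Phi$ yields a partial isometric matching $X \to \phi(X)$ compatible with $\pi_U$, which extends to an automorphism of $\mm{U}_B$ by Proposition \ref{compact homogeneity}; conversely, composing the inclusion with any $\alpha \in Aut(B)$ produces such an embedding. Applying the same reasoning to $\mm{Y}$ and comparing with the formula \eqref{eq:autb} for the quotient metric yields
\[
\dghb(\mm{X}, \mm{Y}) = \inf_{\alpha, \beta \in Aut(B)} d_H^U(\alpha(X), \beta(Y)) = d_F^B([\mm{X}], [\mm{Y}]).
\]

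The main potential obstacle would have been extending isometries between compact subsets to the whole Urysohn field, but this has already been handled by Lemma \ref{compact injectivity} and Proposition \ref{compact homogeneity}. With those in hand, the argument is a direct functional analogue of the classical identification of the Gromov-Hausdorff moduli space of compact metric spaces with compact subsets of the Urysohn space modulo the action of its isometry group.
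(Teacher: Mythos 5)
Your proposal is correct and follows essentially the same route as the paper: it combines Lemma \ref{GH in U} (realizing $\dghb$ as an infimum of Hausdorff distances inside $\mm{U}_B$) with the compact homogeneity of Proposition \ref{compact homogeneity} to identify images of embeddings with $Aut(B)$-orbits, and then invokes the quotient-metric formula \eqref{eq:autb}. The only difference is that you spell out the well-definedness, surjectivity, and injectivity of the induced bijection explicitly, which the paper leaves implicit.
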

\begin{proof}
%---------------------
For any compact $B$-fields $\mm{X}$ and $\mm{Y}$, Lemma \ref{GH in U} shows that 
\begin{equation}
    \dghb(\mm{X},\mm{Y})= \inf_{\Phi, \Psi} d^{U}_H(\phi(X),\psi(Y)),
\end{equation}
where $\Phi \colon \mm{X} \dto \mm{U}_B$ and $\Psi \colon \mm{Y} \dto \mm{U}_B$ are isometric embeddings. By Proposition \ref{compact homogeneity}, any other isometric embeddings $\Phi' \colon \mm{X} \dto \mm{U}_B$ and $\Psi' \colon \mm{Y} \dto \mm{U}_B$ differ from $\Phi$ and $\Psi$ by the action of automorphisms of $\mm{U}_B$. This proves the claim.
\end{proof}

We conclude this section with an extension of Theorem \ref{thm: F_B = F_I} that allows the target space $B$ to also vary.

\begin{definition}
Let $\mm{X}=(X,B_X,\pi_X)$ and $\mm{Y}=(Y,B_Y,\pi_Y)$ be fields. An {\em isometric embedding} $\Phi \colon \mm{X} \dto \mm{Y}$ consists of a pair of isometric embeddings $f: X\to Y$ and $g: B_X\to B_Y$ of metric spaces such that the diagram
\begin{equation*}
\begin{tikzcd}[column sep=large]
X  \arrow[r, "f"] \arrow[d, "\pi_X"'] & Y \arrow[d, "\pi_Y"] \\
 B_X \arrow[r, "g"'] & B_Y
\end{tikzcd}
\end{equation*}
commutes. If both $f$ and $g$ are also surjective, $\Phi$ is called an {\em isometry}. $\mm{X}$ and $\mm{Y}$ are {\em isometric}, denoted by  $\mm{X} \simeq \mm{Y}$, if there exists an isometry between them.
\end{definition}
%------------------------
\begin{proposition} \label{prop:extu}
If $\mm{U} =(U, U, \pi)$ is an ultimate Urysohn field ($U$ is a Urysohn space), then $\mm{U}$ has the one-point extension property for compact subfields and any isometric double-matching between compact subfields extends to an automorphism of $\mm{U}$.
\end{proposition}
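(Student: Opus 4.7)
My plan is to prove the compact one-point extension property first, then derive the compact double-matching extension by a back-and-forth construction that uses the first as the key inductive step.

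\textbf{Compact one-point extension.} Let $A \subseteq U$ and $Z \subseteq B$ be compact with $\pi(A) \subseteq Z$, and let $p \colon A^\ast \to Z^\ast$ be a $1$-Lipschitz map between one-point metric extensions (the commutativity demanded by Definition~\ref{def:uurysohn} forces $p|_A = \pi|_A$). Since $B$ is a Urysohn space by Theorem~\ref{thm:ultimateUrysohnCharacterization}, Huhunai{\v s}vili's compact homogeneity theorem for Urysohn spaces (cf.~Section~\ref{sec:intro}) extends the inclusion $Z \hookrightarrow B$ to an isometric embedding $\jmath \colon Z^\ast \to B$. The composition $\jmath \circ p \colon A^\ast \to B$ is $1$-Lipschitz and extends $\pi|_A$, so Lemma~\ref{compact injectivity} applied to the Urysohn $B$-field $\pi$ yields an isometric embedding $\imath \colon A^\ast \to U$ restricting to the inclusion on $A$ with $\pi \circ \imath = \jmath \circ p$, which is exactly the desired commutative square.

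\textbf{Compact double matching via back-and-forth.} Given compact isometries $\phi \colon A \to A'$ and $\theta \colon Z \to Z'$ satisfying the compatibility of Definition~\ref{def:matching}, fix countable dense sequences $\{u_n\} \subseteq U$ and $\{b_n\} \subseteq B$. I build nested compact sets $A \subseteq A_n,\ A' \subseteq A'_n \subseteq U$ and $Z \subseteq Z_n,\ Z' \subseteq Z'_n \subseteq B$, together with bijective isometries $\phi_n \colon A_n \to A'_n$ and $\theta_n \colon Z_n \to Z'_n$ extending $\phi$ and $\theta$, maintaining the invariants $\pi(A_n) \subseteq Z_n$, $\pi(A'_n) \subseteq Z'_n$, and $\pi \circ \phi_n = \theta_n \circ \pi|_{A_n}$. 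To absorb $u_n$ on the source side, form the abstract one-point extension $A'_n \sqcup \{\alpha\}$ with distances from $\alpha$ transported from those of $u_n$ in $A_n$ via $\phi_n$, and (if $\pi(u_n) \notin Z_n$) the extension $Z'_n \sqcup \{\beta\}$ with distances from $\beta$ transported via $\theta_n$; define the natural $1$-Lipschitz $p \colon A'_n \sqcup \{\alpha\} \to Z'_n \sqcup \{\beta\}$ extending $\pi|_{A'_n}$ and sending $\alpha \mapsto \beta$, and apply the first paragraph to obtain simultaneously the image point $u'_n \in U$ (defining $\phi_{n+1}(u_n)$) and, when needed, a new point of $B$ defining $\theta_{n+1}(\pi(u_n))$. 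Alternate with the symmetric back-step for $u_n$ (as a pre-image) and with the analogous steps for the $B$-side points $b_n$, producing a full back-and-forth that places each $u_n$ in both $A_\infty$ and $A'_\infty$, and each $b_n$ in both $Z_\infty$ and $Z'_\infty$.

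\textbf{Limit and main obstacle.} The subsets $A_\infty := \bigcup_n A_n,\ A'_\infty$ are dense in $U$ and $Z_\infty, Z'_\infty$ are dense in $B$, so the limiting bijective isometries $\phi_\infty$ and $\theta_\infty$ extend uniquely by completeness to bijective isometries $\tilde\phi \colon U \to U$ and $\tilde\theta \colon B \to B$; the identity $\pi \circ \tilde\phi = \tilde\theta \circ \pi$ holds on a dense subset and propagates to all of $U$ by continuity, giving the desired double automorphism of $\mm{U}$ extending $(\phi, \theta)$. The main obstacle is the coupled bookkeeping of the induction: an enlargement of $\phi_n$ may force a simultaneous enlargement of $\theta_n$ because $\pi$ links the two sides, and a sequential approach would break compatibility. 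The joint one-point extension property proved in the first paragraph is precisely the tool that executes both enlargements in a single compatible stroke, after which the back-and-forth proceeds in the same spirit as the uniqueness proof of Theorem~\ref{thm:urysohnmap} and Proposition~\ref{compact homogeneity}.
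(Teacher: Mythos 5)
Your proof is correct and follows essentially the same route as the paper, whose proof simply declares the argument identical to that of Proposition \ref{compact homogeneity}: establish the compact one-point extension property and then run a back-and-forth as in the uniqueness part of Theorem \ref{thm:urysohnmap}. Your explicit reduction of the joint one-point extension to Huhunai{\v s}vili's compact homogeneity of the Urysohn target together with Lemma \ref{compact injectivity}, and your careful coupled bookkeeping of the $U$- and $B$-sides, just spell out what the paper leaves implicit.
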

%-----------------------
\begin{proof}
Identical to the proof of Proposition \ref{compact homogeneity}.
\end{proof}
%------------------------
\begin{definition} \label{def: functional GH distance}
Let $\mm{X}=(X,B_X,\pi_X)$ and $\mm{Y}=(Y,B_Y,\pi_Y)$ be compact fields. The {\em Gromov-Hausdorff distance} $\dgh(\mm{X}, \mm{Y})$ is defined as
\begin{equation*}
    \dgh(\mm{X}, \mm{Y}) := \inf_{Z,\Phi,\Psi} d^Z_H(\phi(X),\psi(Y)),
\end{equation*}
where the infimum is taken over all isometric embeddings $\Phi \colon \mm{X} \dto \mm{Z}$ and $\Psi \colon \mm{Y} \dto \mm{Z}$, where $\mm{Z}$ is a compact field. 
\end{definition}
%--------------------
Let $\fmm{F}$ be the moduli space of isometry classes of compact fields (with arbitrary Polish codomain $B$) equipped with the Gromov-Hausdorff distance. Arguing as in the case of metric spaces, one can show that this moduli space of fields is Polish. Let $\mm{U}$ be an ultimate Urysohn field, $F(\mm{U})$ be the space of compact subfields of $\mm{U}$, and $Aut (\mm{U})$ be the automorphism group of $\mm{U}$. We equip the quotient $F(\mm{U})/Aut (\mm{U})$ with a metric $d_F^\mm{U}$ defined as in \eqref{eq:autb}.

%-----------------------
\begin{theorem} \label{thm;fgh}
Let $\mm{U}$ be an ultimate Urysohn field. The moduli space $(\fmm{F}, \dgh)$ of isomorphism classes of compact fields is isometric to the quotient space $(F(\mm{U)}/Aut(\mm{U}), d_F^\mm{U})$.
\end{theorem}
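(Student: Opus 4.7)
The plan is to mirror the proof of Theorem \ref{thm: F_B = F_I}, replacing the Urysohn $B$-field $\mm{U}_B$ by the ultimate Urysohn field $\mm{U}$ and invoking Proposition \ref{prop:extu} in place of Proposition \ref{compact homogeneity}. The argument proceeds in two steps.

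\textbf{Step 1.} I would first establish the analogue of Lemma \ref{GH in U}: for compact fields $\mm{X}$ and $\mm{Y}$,
\[
\dgh(\mm{X}, \mm{Y}) = \inf_{\Phi, \Psi} d_H^U (\phi(X), \psi(Y)),
\]
where the infimum ranges over all isometric embeddings $\Phi \colon \mm{X} \dto \mm{U}$ and $\Psi \colon \mm{Y} \dto \mm{U}$. The inequality $\leq$ holds because the compact subfield of $\mm{U}$ with domain $\phi(X) \cup \psi(Y)$ (and target the image of this domain under the projection of $\mm{U}$) is a valid witness $\mm{Z}$ in Definition \ref{def: functional GH distance}. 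For the reverse direction, given a compact field $\mm{Z} = (Z, B_Z, \pi_Z)$ with isometric embeddings $\Phi' \colon \mm{X} \dto \mm{Z}$ and $\Psi' \colon \mm{Y} \dto \mm{Z}$, I would embed $\mm{Z}$ itself into $\mm{U}$. Writing $\mm{U} = (U, B, \pi_U)$, Theorem \ref{thm:ultimateUrysohnCharacterization} asserts that $\mm{U}$ is strongly universal: the Urysohn target $B$ is universal, so there is an isometric embedding $g \colon B_Z \hookrightarrow B$, and then the $B$-universality of $\pi_U$ applied to the $B$-field $g \circ \pi_Z \colon Z \to B$ yields an isometric embedding $f \colon Z \hookrightarrow U$ with $\pi_U \circ f = g \circ \pi_Z$. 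The pair $(f,g)$ is the desired isometric embedding $\mm{Z} \dto \mm{U}$, and composing with $\Phi'$ and $\Psi'$ produces embeddings of $\mm{X}$ and $\mm{Y}$ into $\mm{U}$ whose images are at Hausdorff distance bounded by $d_H^Z(\phi'(X), \psi'(Y))$ in $U$.

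\textbf{Step 2.} I would then use Proposition \ref{prop:extu} to identify the right-hand side of Step 1 with $d_F^\mm{U}$. Given any two isometric embeddings $\Phi, \Phi' \colon \mm{X} \dto \mm{U}$, the pair consisting of the obvious isometries $\phi(X) \to \phi'(X)$ on the domains and the corresponding restriction on the targets constitutes an isometric double matching between compact subfields of $\mm{U}$; by Proposition \ref{prop:extu}, it extends to a double automorphism of $\mm{U}$, hence to an element of $Aut(\mm{U})$. Consequently, all isometric embeddings of $\mm{X}$ into $\mm{U}$ determine the same $Aut(\mm{U})$-orbit in $F(\mm{U})$. Combining this with Step 1,
\[
\dgh(\mm{X}, \mm{Y}) = \inf_{\Psi \in Aut(\mm{U})} d_H^U (X, \psi(Y)) = d_F^\mm{U}(\mm{X}, \mm{Y}),
\]
where in the middle expression $\mm{X}$ and $\mm{Y}$ are viewed as fixed compact subfields of $\mm{U}$ via any choice of embedding. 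This yields the desired isometry between $(\fmm{F}, \dgh)$ and $(F(\mm{U})/Aut(\mm{U}), d_F^\mm{U})$.

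The main obstacle I anticipate is Step 1, specifically the construction of the embedding $\mm{Z} \dto \mm{U}$ of an arbitrary compact field over an arbitrary Polish target into $\mm{U}$, respecting both the metric on the domain and compatibility with the target projection. The key ingredient here is precisely the characterization of ultimate Urysohn fields in Theorem \ref{thm:ultimateUrysohnCharacterization}, which decomposes the required embedding into a target-space embedding (via universality of the Urysohn target $B$) followed by a fiberwise embedding over the enlarged target (via $B$-universality of $\pi_U$). Once this two-stage procedure is in place, Step 2 and the remainder follow the template of Theorem \ref{thm: F_B = F_I} essentially verbatim, with compact rather than finite homogeneity playing the pivotal role.
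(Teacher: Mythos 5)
Your proposal is correct and follows essentially the same route as the paper, which proves the theorem by repeating the argument of Theorem \ref{thm: F_B = F_I} (via the analogue of Lemma \ref{GH in U}) with Proposition \ref{prop:extu} supplying the compact homogeneity. Your Step 1 simply makes explicit the embedding $\mm{Z} \dto \mm{U}$ via strong universality from Theorem \ref{thm:ultimateUrysohnCharacterization}, a detail the paper leaves implicit.
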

%----------------------
\begin{proof}
The proof is identical to that of Theorem \ref{thm: F_B = F_I} using the compact one-point extension property of Proposition \ref{prop:extu}.
\end{proof}

%----------------------
\section{Metric-Measure Fields}\label{sec:mmfields}

Recall that a metric measure space ($mm$-space) is a triple $(X, d, \mu)$, where $(X, d)$ is a Polish metric space and $\mu$ is a Borel probability measure on $X$. The next definition introduces an analogue for fields.

\begin{definition}
A {\em metric measure field}, or $mm$-field, is a quadruple $(X,B,\pi,\mu)$, where $X$ and $B$ are Polish metric spaces, $\pi \colon X \to B$ is a $1$-Lipschitz map, and $\mu$ is a Borel probability measure on $X$. Two $mm$-fields are {\em isomorphic} if there is a measure-preserving isometry between them.
\end{definition}

Given fields $(X,B,\pi_X)$ and $(Y,B,\pi_Y)$ over $B$, we let $\mXY \colon (X \times Y) \times (X \times Y) \to \R$ and $\dXY \colon X \times Y \to \R$ be the functions given by
\begin{equation}
\begin{split}
\mXY(x,y,x',y') &:=|\dX(x,x')-\dY(y,y')| \\
\dXY(x,y) &:= \dB(\piX(x),\piY(y)) \,.
\end{split}
\end{equation}
Note that
\begin{equation}
    \begin{split}
        |\mXY(x_1,y_1,x_1',y_1')-\mXY(x_2,y_2,x_2',y_2')| &\leq |\dX(x_1,x_1')-\dX(x_2,x_2')| + |\dY(y_1,y_1') - \dY(y_2,y_2')| \\
        &\leq \dX(x_1,x_2) + \dX(x_1',x_2') + \dY(y_1,y_2) + \dY(y_1',y_2') \\
        &\leq 4 \max\{\dX(x_1,x_2), \dY(y_1,y_2), \dX(x_1',x_2'), \dY(y_1',y_2') \}.
\end{split}
\end{equation}
Therefore, if we endow $(X \times Y) \times (X \times Y)$ with the product sup metric, then $\mXY$ is $4$-Lipschitz. Similarly, $\dXY$ is $2$-Lipschitz. Using this notation, we introduce a field version of the Gromov-Wasserstein distance in a manner similar to \cite{vayer2020fused}. 

\smallskip

Given $(X, d_X, \muX)$ and $(Y, d_Y, \muY)$, let $\cXY$ denote the set of all couplings between $\muX$ and $\muY$, the collection of all probability measures $\mu$ on $X \times Y$ that marginalize to $\mu_X$ and $\mu_Y$, respectively.
\begin{definition}[Gromov-Wasserstein Distance for Fields]\label{def:fwd}
Let $\calX=(X,B,\piX,\muX)$ and $\calY=(Y,B,\piY,\muY)$ be $mm$-fields. For $1 \leq p < \infty$, the {\em Gromov-Wasserstein distance} $\dgwp(\calX,\calY)$ is defined as
\begin{equation*}
\begin{split}
    \dgwp(\calX,\calY):= \inf_{P \in \cXY} \costpP .
\end{split}
\end{equation*}
For $p=\infty$,
\begin{equation*}
    \dgwi(\calX,\calY):= \inf_{P \in \cXY} \costiP.
\end{equation*}
\end{definition}

The fact that $\dgw{p}$ and $\dgw{\infty}$ are metrics can be argued as in the case of metric measure spaces (see (\cite{memoli2011gromov}, Theorem 5.1).

\begin{remark}\label{rem:support}
As $\mXY$ and $\dXY$ are continuous, its supremum over a set does not change after taking the closure of the set. Since the support is the smallest closest set of full measure, the  suprema in the definition of $\dgwi(\calX,\calY)$ are essential suprema. 
\end{remark}

\begin{proposition}\label{prop:gwrealization}
For each $1\leq p \leq \infty$, $\dgwp(\calX,\calY)$ is realized by a coupling. Furthermore,
\begin{equation*}
\lim_{p \to \infty} \dgwp(\calX,\calY)=\dgwi(\calX,\calY).
\end{equation*}
\end{proposition}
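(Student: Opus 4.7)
I would apply the direct method of the calculus of variations for part (i), then combine monotonicity of $\dgwp$ in $p$ with weak compactness of couplings for part (ii). Write $\mathrm{cost}_p(P)$ for the quantity appearing inside the infimum in Definition \ref{def:fwd}, so $\dgwp(\calX,\calY)=\inf_{P\in\cXY}\mathrm{cost}_p(P)$ for every $1\le p\le\infty$. Two facts are used throughout: $\mXY$ and $\dXY$ are continuous (noted immediately after Definition \ref{def:fwd}), and by Remark \ref{rem:support} the essential suprema at $p=\infty$ coincide with ordinary suprema over the support.

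\textbf{Part (i).} First I would show $\cXY$ is weakly compact. Tightness of $\muX,\muY$ on the Polish spaces $X,Y$ produces, for each $\epsilon>0$, compact $K_X\subseteq X$ and $K_Y\subseteq Y$ with $\muX(K_X),\muY(K_Y)\ge 1-\epsilon/2$; any $P\in\cXY$ then has $P(K_X\times K_Y)\ge 1-\epsilon$, so $\cXY$ is tight. Weak closedness is immediate by testing against bounded continuous functions of one variable alone, and Prokhorov yields compactness. Next, I would show $\mathrm{cost}_p$ is weakly lower semicontinuous. For $1\le p<\infty$, the product map $P\mapsto P\otimes P$ is weakly continuous on Polish products, and for a continuous nonnegative function $f$ the integral $Q\mapsto\int f\,dQ$ is lsc by Portmanteau/Fatou; applying this to $\mXY^p$ and $\dXY^p$ and then taking $p$-th roots and maxima gives lsc of $\mathrm{cost}_p$. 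For $p=\infty$, given $P_n\to P$ weakly and $(x,y)\in\sP$, every open neighborhood $U\ni(x,y)$ has $P(U)>0$, so $\liminf_n P_n(U)>0$ by Portmanteau, producing $(x_n,y_n)\in\sPn$ with $(x_n,y_n)\to(x,y)$; continuity of $\dXY$ yields $\dXY(x,y)\le\liminf_n\sup_{\sPn}\dXY$, and taking a sup over $\sP$ gives lsc of the $\dXY$-term, with the same argument handling the $\mXY$-term via $P\otimes P$. A minimizing sequence thus admits a weak subsequential limit that attains the infimum.

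\textbf{Part (ii).} For $p\le q$ and any $P\in\cXY$, Jensen's inequality on the probability measures $P$ and $P\otimes P$ gives $\bigl(\int\mXY^p\,\dPP\bigr)^{1/p}\le\bigl(\int\mXY^q\,\dPP\bigr)^{1/q}$ and the analogous bound for $\dXY$; monotone convergence then forces $\mathrm{cost}_p(P)\nearrow\mathrm{cost}_\infty(P)$ as $p\to\infty$. Taking infima gives $\dgwp\le\dgw{q}\le\dgwi$ for $p\le q$, so $\lim_{p\to\infty}\dgwp\le\dgwi$. For the reverse inequality, let $P_p$ be an optimal coupling for $\dgwp$ (by part (i)) and use weak compactness of $\cXY$ to extract a subsequence $p_k\to\infty$ with $P_{p_k}\to P^\ast$ weakly. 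For fixed $q$ and $p_k\ge q$, monotonicity gives $\mathrm{cost}_q(P_{p_k})\le\mathrm{cost}_{p_k}(P_{p_k})=\dgw{p_k}(\calX,\calY)$, so by lsc of $\mathrm{cost}_q$, $\mathrm{cost}_q(P^\ast)\le\liminf_k\dgw{p_k}(\calX,\calY)=\lim_{p\to\infty}\dgwp(\calX,\calY)$. Letting $q\to\infty$ and using $\mathrm{cost}_q(P^\ast)\nearrow\mathrm{cost}_\infty(P^\ast)$ produces $\dgwi(\calX,\calY)\le\mathrm{cost}_\infty(P^\ast)\le\lim_{p\to\infty}\dgwp(\calX,\calY)$, closing the argument.

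\textbf{Main obstacle.} The subtlety is the lsc of $\mathrm{cost}_\infty$: essential supremum is in general only \emph{upper} semicontinuous under weak convergence. Continuity of $\mXY$ and $\dXY$ is what rescues the argument, by letting me replace the ess-sup with a sup over the support and run the Portmanteau neighborhood chase above. Everything else reduces to standard optimal-transport machinery.
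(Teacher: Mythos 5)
Your proposal is correct, and it rests on the same pillars as the paper's proof: weak sequential compactness of $\cXY$ via tightness and Prokhorov (the paper isolates this as Proposition \ref{prop:couplings}), weak convergence of $P_n\otimes P_n$ to $P\otimes P$, lower semicontinuity of the integral costs for finite $p$ (the paper cites Villani's Lemma 4.3 where you invoke Portmanteau/Fatou), and monotonicity of $\dgwp$ in $p$. The difference is in how the $p=\infty$ case is organized. You establish existence of an optimal coupling at $p=\infty$ directly, by proving lower semicontinuity of the $\infty$-cost along weakly convergent sequences through a support-chasing Portmanteau argument, and you then prove $\lim_{p\to\infty}\dgwp=\dgwi$ by a two-step passage: lsc of the finite-$q$ cost applied to a weak limit $P^\ast$ of the optimizers $P_{p_k}$, followed by $q\to\infty$ on the fixed measure $P^\ast$ using the standard fact $\|f\|_{L^q}\uparrow\|f\|_{L^\infty}$ for probability measures. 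The paper instead follows Givens--Shortt: it takes optimal couplings for $\dgw{q_n}$, $q_n\to\infty$, and for any $r$ below the $\infty$-cost of the weak limit it uses the open sets $\{\mXY/2>r\}$ and $\{\dXY>r\}$ together with Portmanteau to get the lower bound $\dgw{q_n}\geq r\,m^{1/q_n}$, which yields the limit statement and the optimality of the limit coupling simultaneously; your $\|f\|_q\to\|f\|_\infty$ step is the same trick applied to the fixed limit measure rather than along the sequence. Both routes are sound; yours cleanly decouples existence at $p=\infty$ from the convergence claim (at the cost of proving lsc of the sup-cost separately), while the paper's is slightly more economical by extracting both conclusions from one estimate. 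Your closing remark correctly identifies the key point that continuity of $\mXY$ and $\dXY$ lets one replace essential suprema by suprema over supports (the paper's Remark \ref{rem:support}), which is what makes the Portmanteau argument work.
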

%---------------
\begin{proof}
For each integer $n\geq 1$, there exists $P_n \in \cXY$ such that the expression on the right-hand side in the definition of $\dgwp(\calX,\calY)$ is $ \leq \dgwp(\calX,\calY) + 1/ n$. By Proposition \ref{prop:couplings} in the appendix, without loss of generality, we can assume that $P_n$ converges weakly to a coupling $P$, which also implies that $P_n \otimes P_n$ converges weakly to $P \otimes P$ (\cite[Theorem~2.8] {billingsley2013convergence}). We show that $P$ is an optimal coupling.

\smallskip
\noindent
{\bf Case 1.} Suppose that $1 \leq p < \infty$. Since $\mXY$ and $\dXY$ are continuous and bounded below, by \cite[Lemma~4.3]{villani2008optimal} we have
\begin{equation}
        \int \mXY^p \dPP \leq \liminf_n \mXY^p \dPPn 
        \quad \text{and} \quad
        \int \dXY^p \dP \leq \liminf_n \int \dXY^p \dPn \,.
\label{eq:pnp}
\end{equation} 
Using \eqref{eq:pnp} and the fact that for any sequences $(a_n)$ and $(b_n)$ of real numbers the inequality
\begin{equation}
\max \,\{\liminf_n a_n, \liminf b_n\} \leq \liminf_n \max \{a_n,b_n\} 
\end{equation}
holds, we obtain
\begin{equation}
\begin{split}
        \dgwp(\calX,\calY) &\leq \costpP \\
        & \leq \liminf_n \max \left\{ \frac{1}{2} \left( \int \mXY^p \dPPn \right)^{1/p}, \left( \int \dXY^p \dPn \right)^{1/p} \right\} \\
        &\leq \liminf_n \left( \dgwp(\calX,\calY) + 1/n \right) = \dgwp(\calX,\calY).
\end{split}
\end{equation}
This implies that $P$ realizes the Gromov-Wasserstein distance, as claimed.

\medskip
\noindent    
{\bf Case 2} For $p=\infty$, we adapt the proof of \cite[Proposition~3]{givens1984class} to the present setting. Note that if $1 \leq q \leq q' < \infty$, then
\begin{equation}
\dgw{q}(\calX,\calY) \leq \dgw{q'}( \calX,\calY) \leq \dgwi(\calX,\calY).
\end{equation}
Hence, we have
\begin{equation}
\lim_{q \to \infty} \dgw{q}(\calX,\calY) = \sup_q \dgw{q}(\calX,\calY) \leq \dgwi(\calX,\calY) .
\end{equation}

Let $(q_n)$ be a sequence of real numbers satisfying $q_n \geq 1$ and $q_n \to \infty$. Let $P_n$ be the optimal coupling realizing $\dgw{q_n}(\calX,\calY)$. Without loss of generality, we can assume that $P_n$ converges to $P$ weakly, which implies that $P_n \otimes P_n$ also converges weakly to $P \otimes P$. Let $0 \leq r < \costiP$ and set
\begin{equation}
U =\{(x,y,x',y') \colon \mXY (x,y,x',y') /2 > r \} \quad \text{and} \quad V = \{(x,y) \colon \dXY (x,y) >r \}.
\end{equation}
Either $P \otimes P (U)> 0$ or $P(V) > 0$, let us first assume that $P \otimes P (U) = 2m > 0$. By the Portmanteau Theorem \cite[Theorem~11.1.1]{dudley2018real},
\begin{equation}
2m \leq \liminf P_n \otimes P_n(U).
\end{equation}
By passing to a subsequence if necessary, we can assume that $P_n \otimes P_n(U) \geq m> 0$, for all $n$. We then have 
\begin{equation}
        \dgw{q_n}(\calX, \calY) \geq \frac{1}{2} \bigg(\int \mXY^{q_n} \dPPn \bigg)^{1/q_n}
        \geq r \, (P_n \otimes P_n (U))^{1/q_n}  \geq r \, m^{1/q_n}.
\end{equation}
Hence,
\begin{equation}
        \lim_{p \to \infty} \dgwp(\calX,\calY) = \lim_n \dgw{q_n}(\calX,\calY) \geq r.
\end{equation}
    Since $r < \costiP$ is arbitrary, we get
    \begin{equation}
    \begin{split}
        \costiP &\geq \dgwi(\calX,\calY) \\ &\geq \lim_{p \to \infty} \dgwp(\calX,\calY) \\  &\geq \costiP.
    \end{split}
    \end{equation}
This shows that the coupling $P$ realizes $\dgw{\infty}(\calX,\calY)$ and also proves the convergence claim. The case $P(V)>0$ is handled similarly.
\end{proof}

The next proposition establishes a standard relation between Gromov-Wasserstein and Wasserstein distances in the setting of $mm$-fields.

\begin{proposition} \label{prop:gwembedding}
Let $\calX=(X,B,\pi_X,\mu_X)$ and $\calY=(Y,B,\pi_Y, \mu_Y)$ be $mm$-fields over $B$. Suppose that $\piZ \colon Z \to B$ is 1-Lipschitz and $\iX \colon X \to Z$ and $\iY \colon Y \to Z$ are isometric embeddings satisfying $\piX=\piZ \circ \iota_X$ and $\piY= \piZ \circ \iY$. Then, for any $1 \leq p \leq \infty$, we have
\[
\dgwp(\calX,\calY) \leq \dwp((\iX)_*(\mu_X),(\iY)_*(\mu_Y))\,.
\]
\end{proposition}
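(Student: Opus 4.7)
The plan is to lift any coupling of the pushed-forward measures on $Z$ to a coupling on $X \times Y$, and then control both terms of the Gromov-Wasserstein cost by the corresponding Wasserstein cost on $Z$.

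Given a coupling $\nu \in C((\iX)_*\mu_X, (\iY)_*\mu_Y)$ on $Z \times Z$, I would first observe that $\nu$ is concentrated on $\iX(X) \times \iY(Y)$, since its marginals are concentrated on $\iX(X)$ and $\iY(Y)$ respectively. Because $\iX \colon X \to \iX(X)$ and $\iY \colon Y \to \iY(Y)$ are bijective isometries between Polish spaces, their inverses are Borel-measurable on the image. Thus I can define the coupling
\[
P := (\iX^{-1} \times \iY^{-1})_* \nu \in \cXY,
\]
and, conversely, $\nu = (\iX \times \iY)_* P$. The marginal conditions on $\nu$ transfer to the correct marginal conditions on $P$.

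The next step is a pair of pointwise inequalities. Using that $\iX$ and $\iY$ are isometries and the triangle inequality in $Z$,
\[
\tfrac{1}{2}\mXY(x,y,x',y') = \tfrac{1}{2}\bigl|d_Z(\iX(x),\iX(x')) - d_Z(\iY(y),\iY(y'))\bigr| \leq \tfrac{1}{2}\bigl(d_Z(\iX(x),\iY(y)) + d_Z(\iX(x'),\iY(y'))\bigr).
\]
Using that $\piZ$ is 1-Lipschitz with $\piX = \piZ \circ \iX$, $\piY = \piZ \circ \iY$,
\[
\dXY(x,y) = d_B(\piX(x),\piY(y)) \leq d_Z(\iX(x),\iY(y)).
\]

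For $1 \leq p < \infty$, I would integrate against $P \otimes P$ and $P$ respectively. Writing $f(x,y,x',y') := d_Z(\iX(x),\iY(y))$ and $g(x,y,x',y') := d_Z(\iX(x'),\iY(y'))$, both $\|f\|_{L^p(P\otimes P)}$ and $\|g\|_{L^p(P\otimes P)}$ equal $\|d_Z\|_{L^p(\nu)}$ by Fubini and the pushforward identity $\nu = (\iX \times \iY)_* P$. Minkowski's inequality then gives
\[
\tfrac{1}{2}\Bigl(\int \mXY^p \dPP\Bigr)^{1/p} \leq \|d_Z\|_{L^p(\nu)}, \qquad \Bigl(\int \dXY^p \dP\Bigr)^{1/p} \leq \|d_Z\|_{L^p(\nu)},
\]
so the cost of $P$ is at most $\|d_Z\|_{L^p(\nu)}$. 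For $p = \infty$, essentially the same pointwise inequalities, combined with the fact that $\sPP$ is contained in $(\sP)^2$ and that $(\iX \times \iY)(\sP) \subseteq \mathrm{supp}\,\nu$, yield the corresponding essential-supremum bound. Taking the infimum over all $\nu \in C((\iX)_*\mu_X,(\iY)_*\mu_Y)$ completes the proof.

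The only subtlety — and really the only place where care is needed — is the construction of $P$ as a Borel pullback of $\nu$ under $\iX^{-1} \times \iY^{-1}$; this uses standard facts about isomorphisms of Polish spaces onto their (Borel) images. Everything else is a bookkeeping exercise comparing $L^p$-norms under the measure-preserving correspondence between $P$ and $\nu$.
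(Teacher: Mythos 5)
Your proposal is correct and takes essentially the same route as the paper: lift a coupling of the pushforward measures on $Z$ to a coupling $P \in \cXY$ through the embeddings (the paper lifts the optimal coupling $Q$ via $(\iX \times \iY)_\ast P = Q$, exactly your construction in reverse notation), then bound $\tfrac{1}{2}\mXY$ by the triangle inequality in $Z$ plus Minkowski and bound $\dXY$ by the $1$-Lipschitzness of $\piZ$. The only minor divergence is the case $p=\infty$, which you handle directly through supports of $P \otimes P$ and $\nu$, whereas the paper obtains it by letting $p \to \infty$ in the finite-$p$ inequality using Proposition \ref{prop:gwrealization}; both arguments are valid.
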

%---------------------
\begin{proof}Let $1 \leq p < \infty$. Let $Q$ be the optimal coupling between $(\iX)_*(\mu_X)$ and $(\iY)_*(\mu_Y)$ realizing $\dwp((\iX)_*(\muX), (\iY)_*(\mu_Y))$. Since $\iX (X) \times \iY (Y)$ has full measure in $(Z \times Z, P)$, there is a measure $P$ on $X \times Y$ such that $(\iX \times \iY)_*(P)=Q$. Since $(\iX)_*$ and $(\iY)_*$ are injective, $P \in C(\muX,\muY)$. We have
\begin{equation}
\begin{split}
    \costpPm &= \bigg(\iint |\dZ(w,w')-\dZ(z,z')|^p dQ(w,z)dQ(w',z') \bigg)^{1/p}\\
    &\leq \bigg( \iint (\dZ(w,z)+\dZ(w',z'))^pdQ(w,z)dQ(w',z') \bigg)^{1/p} \\
    &\leq \bigg( \int \dZ(w,z)^pdQ(w,z) \bigg)^{1/p} + \bigg( \int \dZ(w',z')^pdQ(w',z') \bigg)^{1/p} \\
    &= 2 \dwp((\iX)_*(\muX), (\iY)_*(\muY)).
\end{split}
\end{equation}
Similarly,
\begin{equation}
\begin{split}
    \costpPd &= \bigg(\int \dB(\piZ(w),\piZ(z))^p dQ(w,z) \bigg)^{1/p} \\
    &\leq \bigg(\int \dZ(w,z)^p dQ(w,z) \bigg)^{1/p} = \dwp((\iX)_*(\muX), (\iY)_*(\muY)).
\end{split}
\end{equation}
Hence,
\begin{equation}
    \dgwp(\calX,\calY) \leq \costpP \leq \dwp((\iX)_*(\muX), (\iY)_*(\muY)).
\label{eq:lim}    
\end{equation}
Letting $p \to \infty$ in \eqref{eq:lim}, we get $\dgwi(\calX,\calY) \leq \dwi((\iX)_*(\muX), (\iY)_*(\muY))$.
\end{proof}

Recall that, in a metric measure space $(X,d_X, \mu_X)$, a sequence $(x_n)$ is called uniformly distributed (u.d.) if  $\sum_{i=1}^n \delta_{x_i}/n \to \mu_X$ weakly. Let $U_X$ denote the set of uniformly distributed sequences in $X$. It is known that $U_X$ is a Borel set in $X^\infty$ and $\mu_X^\infty(U_X)=1$ \cite{kondo2005probability}.

The next result provides a characterization of $\dgwi(\calX,\calY)$ in terms of uniformly distributed sequences.

\begin{theorem}\label{thm:gwuniform}
Let $\fmm{X} = (X, d_X, \pi_X, \mu_X)$ and $\fmm{Y} = (Y, d_Y, \pi_Y, \mu_Y)$ be bounded $mm$-fields over $B$. Then,
    $$\dgwi(\calX,\calY)= \inf_{\tiny \begin{matrix} (x_n) \in U_X \\ (y_n) \in U_Y \end{matrix}} \max \left\{ \frac{1}{2} \sup_{i,j} \mXY(x_i,y_i,x_j,y_j),
    \, \sup_i \dXY(x_i,y_i) \right\}.$$
Furthermore, there are sequences $(x_n) \in U_X$ and $(y_n) \in U_Y$ that realize the infimum on the right-hand side.
\end{theorem}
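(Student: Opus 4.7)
The plan is to prove the identity via two inequalities; the $\geq$ direction will simultaneously yield the existence of optimizing sequences. Throughout, let $\mathrm{RHS}$ denote the infimum on the right-hand side.

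For the $\leq$ inequality, fix any $(x_n) \in U_X$ and $(y_n) \in U_Y$, and form the empirical measures $P_n := \tfrac{1}{n}\sum_{i=1}^n \delta_{(x_i,y_i)}$ on $X \times Y$. By definition of $U_X$ and $U_Y$, the marginals of $P_n$ converge weakly to $\mu_X$ and $\mu_Y$ respectively, so marginal tightness gives joint tightness of $\{P_n\}$. By Prokhorov's theorem, a subsequence converges weakly to some $P \in C(\mu_X,\mu_Y)$. The Portmanteau theorem implies $\mathrm{supp}(P) \subseteq \overline{\{(x_i,y_i)\}_{i \geq 1}}$, and consequently $\mathrm{supp}(P \otimes P) \subseteq \overline{\{(x_i,y_i,x_j,y_j)\}_{i,j \geq 1}}$. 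By continuity of $\mXY$ and $\dXY$ (see Remark \ref{rem:support}),
\[
\sup_{\mathrm{supp}(P)} \dXY \;\leq\; \sup_i \dXY(x_i,y_i), \qquad \sup_{\mathrm{supp}(P \otimes P)} \mXY \;\leq\; \sup_{i,j} \mXY(x_i,y_i,x_j,y_j).
\]
Taking the max and infimizing over $(x_n), (y_n)$ gives $\dgwi(\calX,\calY) \leq \mathrm{RHS}$.

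For the $\geq$ inequality and the realization claim, invoke Proposition \ref{prop:gwrealization} to pick an optimal coupling $P \in C(\mu_X,\mu_Y)$ with cost equal to $\dgwi(\calX,\calY)$. Consider the product measure $P^\infty$ on $(X \times Y)^\infty$. Applying the classical fact (cited from \cite{kondo2005probability} in the paragraph preceding the theorem) to the Polish space $X \times Y$ equipped with $P$, the set of $P$-uniformly distributed sequences in $X \times Y$ has full $P^\infty$-measure; projecting marginally, the same sample sequence gives $(x_n) \in U_X$ and $(y_n) \in U_Y$ almost surely. Moreover, because $P(\mathrm{supp}(P)) = 1$, we have $(x_n,y_n) \in \mathrm{supp}(P)$ for every $n$, almost surely. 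Hence, for $P^\infty$-a.e.\ sequence,
\[
\sup_i \dXY(x_i,y_i) \;\leq\; \sup_{\mathrm{supp}(P)} \dXY, \qquad \sup_{i,j} \mXY(x_i,y_i,x_j,y_j) \;\leq\; \sup_{\mathrm{supp}(P \otimes P)} \mXY,
\]
so the value of the right-hand expression at this particular pair of sequences is bounded above by the cost of $P$, which equals $\dgwi(\calX,\calY)$. This gives both $\mathrm{RHS} \leq \dgwi(\calX,\calY)$ and the existence of sequences attaining the infimum.

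The main technical obstacle is the $P^\infty$-a.s.\ uniform distribution claim, which is the only place where a non-elementary probabilistic fact is used; everything else is a transfer between couplings and empirical measures using weak convergence and continuity of $\mXY, \dXY$. The boundedness hypothesis on $\calX$ and $\calY$ ensures that the suprema are finite and that no normalization issue arises in comparing costs; it does not otherwise enter the argument.
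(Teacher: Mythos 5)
Your argument is correct, and while the ``$\mathrm{RHS}\le \dgwi$'' half (optimal coupling plus a $P$-equidistributed sequence, which also yields the realization claim) coincides with the paper's argument, your proof of $\dgwi(\calX,\calY)\le \mathrm{RHS}$ takes a genuinely different route. The paper fixes $(x_n)\in U_X$, $(y_n)\in U_Y$, builds the finite $mm$-fields $\calE_n,\calF_n$ on $\{1,\dots,n\}$, controls $\dgwp(\calX,\calE_n)$ and $\dgwp(\calY,\calF_n)$ by empirical Wasserstein distances via Proposition \ref{prop:gwembedding} and the convergence of empirical measures, uses the diagonal coupling to bound $\dgwp(\calE_n,\calF_n)$ by the sequence suprema, and only then passes to $p\to\infty$ through Proposition \ref{prop:gwrealization}. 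You instead work directly at $p=\infty$: the empirical couplings $\tfrac1n\sum_{i\le n}\delta_{(x_i,y_i)}$ are jointly tight (their marginals converge weakly, hence are tight), a Prokhorov subsequential weak limit $P$ is a genuine element of $C(\mu_X,\mu_Y)$, and Portmanteau gives $\mathrm{supp}(P)\subseteq\overline{\{(x_i,y_i)\}}$, so $\costiP$ is dominated by the sequence suprema; this is sound, including the step $\mathrm{supp}(P\otimes P)=\mathrm{supp}(P)\times\mathrm{supp}(P)\subseteq\overline{\{(x_i,y_i,x_j,y_j)\}}$ and the use of continuity of $\mXY,\dXY$ to pass suprema to closures. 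What each approach buys: yours avoids the detour through finite fields, the empirical Wasserstein convergence theorem, and the $p\to\infty$ interchange, and it uses the boundedness hypothesis only incidentally; the paper's route leans on machinery (Propositions \ref{prop:gwembedding} and \ref{prop:gwrealization}, and the finite-sample estimates of Propositions \ref{prop:almostuniform} and \ref{prop:dndifference}) that it reuses quantitatively in the proof of Theorem \ref{thm: dWp(Dx,Dy)=dGW(infty)}, so the extra scaffolding is not wasted there. One small presentational point: in the realization step you should note, as you implicitly do, that once both inequalities are in place the value attained by your $P^\infty$-generic pair of sequences is squeezed between $\mathrm{RHS}$ and $\dgwi(\calX,\calY)$, hence equals the infimum.
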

\begin{proof}
 Let us denote the infimum on the right-hand side by $\alpha$ and let $P$ be an optimal coupling realizing $\dgwi(\calX,\calY)$. Then,
 \begin{equation}
 \dgwi(\calX,\calY)=\costiP .
 \end{equation}
Let $(x_n,y_n)$ be an equidistributed sequence with respect to $P$ in $\sP$. Then $(x_n) \in U_X$ and $(y_n) \in U_X$, and we have 
    \begin{equation}
    \begin{split}
        \alpha &\leq  \max \left\{ \frac{1}{2} \sup_{i,j} \mXY(x_i,y_i,x_j,y_j), \, \sup_i \dXY(x_i,y_i) \right\} \\
        &\leq \costiP = \dgwi(\calX,\calY).
    \end{split}
    \end{equation}
Now we prove the inequality in the other direction. Let $p \geq 1$, $\epsilon>0$, $(x_n) \in U_X$ and $(y_n) \in U_Y$. Let $\calE_n$ be the $mm$-field with domain $E_n= \{1,\dots,n\}$ equipped with the (pseudo)-metric $d_{E_n}(i,j)=d_X(x_i,x_j)$, normalized counting measure, and the 1-Lipschitz function $\pi_E(i)=\pi_X(x_i)$. Similarly, define $\calF_n$ using $(y_n)$. By Proposition \ref{prop:gwembedding}, we have
\begin{equation}
\dgwp(\calX, \calE_n) \leq \dwp(\mu_X, \sum_{i=1}^n \delta_{x_i}/n) \quad \text{and} \quad \dgwp(\calY, \calF_n) \leq \dwp(\mu_Y, \sum_{i=1}^n \delta_{y_i}/n) .
\end{equation}
Since the Wasserstein distances in the above inequalities converge to $0$ as $n \to \infty$ (\cite[Theorem~6.9]{villani2008optimal}), we can choose $n$ large enough so that the Gromov-Wasserstein distances in the above inequalities are $< \epsilon$. If we use diagonal coupling $P \in C(\calE_n, \calF_n)$ given by $P({i,i})=1/n$, we get
\begin{equation}
\dgwp(\calE_n,\calF_n) \leq  \max \left\{ \frac{1}{2} \sup_{i,j} \mXY(x_i,y_i,x_j,y_j), \, \sup_i \dXY(x_i,y_i) \right\}.
\end{equation}
This, in turn, implies that 
 \begin{equation}
\dgwp(\calX, \calY) \leq \max \left\{ \frac{1}{2} \sup_{i,j} \mXY(x_i,y_i,x_j,y_j), \, \sup_i \dXY(x_i,y_i) \right\} + 2\epsilon.
\end{equation}
Since $(x_n) \in U_X$, $(y_n) \in U_Y$, and $\epsilon>0$ is arbitrary, we get $\dgwp(\calX,\calY) \leq \alpha$. As $p \geq 1$ is arbitrary, Proposition \ref{prop:gwrealization} implies that
\begin{equation}
\dgwi(\calX,\calY)=\lim_{p \to \infty} \dgwp(\calX,\calY) \leq \alpha .
\end{equation}
This also shows that the sequences $(x_n) \in U_X$ and $(y_n) \in U_Y$ constructed above realizes the infimum.
\end{proof}

%---------------------------
\section{Augmented Distance Matrices} \label{sec:dmatrix}

Given a sequence $(x_n)$ is an $mm$-space $(X, d, \mu)$, one can form an associated (infinite) distance matrix $D = (d_{ij})$, where $d_{ij} = d(x_i, x_j)$. Gromov's Reconstruction Theorem \cite{gromov2007metric} states that the distribution of all distance matrices for $(X,d,\mu)$ with respect to the product measure $\mu^\infty$ is a complete invariant. This section introduces {\em augmented distance matrices} to establish a similar result for $mm$-fields and also studies relationships between the Gromov-Wasserstein distance between $mm$-fields and the Wasserstein distance between the corresponding augmented distance matrix distributions. For an integer $n>0$, let
%-------------------
\begin{equation}
\calR^n:=\{(r_{ij}) \in \R^{n \times n}: r_{ij}=r_{ji}, r_{ii}=0, r_{ik} \leq r_{ij}+r_{jk} \}
\end{equation}
denote the space of all $n \times n$ (pseudo) distance matrices equipped with the metric
\begin{equation}
\rho_n ((r_{ij},b_i),(r'_{ij},b'_i)):=\max \, (\frac{1}{2}\sup_{ij}|r_{ij}-r'_{ij}|,\, \sup_i d_B(b_i,b'_i)).
\end{equation}
Similarly, denoting the natural numbers by $\mathbb{N}$, let
\begin{equation}
\calR:=\{(r_{ij}) \in \R^{\mathbb{N} \times \mathbb{N}}: r_{ij}=r_{ji}, r_{ii}=0, r_{ik} \leq r_{ij}+r_{jk}  \}
\end{equation}
be the space of all countably infinite (pseudo) distance matrices equipped with the weak topology; that is, the coarsest topology that makes all projections $\pi_n \colon \calR \to \calR^n$ (onto the northwest $n \times n$ quadrant) continuous, $n >0$.
%-------------------
\begin{definition} \label{def matrix dstrbton}
Let $B$ be a Polish space.
\begin{enumerate}[(i)] 
\item The space of (countably infinite) {\em augmented distance matrices} (ADM) is defined as $\calR_B := \calR \times B^\infty$. 
\item Similarly, for $n> 0$, define the space of $n \times n$ {\em augmented distance matrices} as $\calR_B^n:= \fmm{R}^n \times B^n$. 
\end{enumerate}
\end{definition}
\noindent
In the study of $mm$-fields $(X, B, \pi, \mu)$, if $(x_n)$ is a sequence in $X$, we associate to $(x_n)$ the ADM defined by $r_{ij} = d(x_i, x_j)$ and $b_i = \pi (x_i)$. 

%---------------------------
\begin{definition}[ADM Distribution]
Let $\mathcal{X}=(X, B,\pi,\mu)$ be an $mm$-field and $\calFX: X^\infty \to \mathcal{R}_B$ the map $(x_i) \mapsto (d_X(x_i,x_j),\pi(x_i))$. The {\em augmented distance matrix distribution} of $\fmm{X}$ is defined as $\calDX =(\calFX)_*(\mu^\infty)$. Similarly, for $n>0$, define $\calFXn: X^n \to \calR_B^n$ and $\mathcal{D}_\mathcal{X}^n:=(\calFXn)_*(\mu^n)$.
\end{definition}
%--------------------------
\begin{theorem}(Field Reconstruction Theorem) \label{thm:reconstruction}
Let $\fmm{X} = (X,d_X,\pi_X, \mu_X)$ and $\fmm{Y} = (Y,d_Y, \pi_Y, \mu_Y)$ be $mm$-fields over $B$ such that $\mu_X$ and $\mu_Y$ are fully supported. Then,
\[
\calX \simeq \calY \text{ if and only if } \calDX=\calDY.
\]
\end{theorem}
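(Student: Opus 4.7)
The plan is to prove the two directions of the equivalence separately, with the forward direction being a short invariance check and the reverse direction carrying the content.

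For the forward direction, suppose there is a measure-preserving isometry $\phi \colon X \to Y$ with $\pi_Y \circ \phi = \pi_X$ and $\phi_\ast \mu_X = \mu_Y$. Applying $\phi$ coordinatewise gives a measurable map $\phi^\infty \colon X^\infty \to Y^\infty$ with $(\phi^\infty)_\ast \mu_X^\infty = \mu_Y^\infty$, and since $\phi$ preserves both distances and the $B$-values, one has $\calFY \circ \phi^\infty = \calFX$. Pushing forward gives $\calDY = (\calFY)_\ast \mu_Y^\infty = (\calFX)_\ast \mu_X^\infty = \calDX$.

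For the reverse direction, I would follow the strategy of Gromov's original reconstruction theorem, adapted to the augmented setting. Let $E_X \subseteq X^\infty$ be the set of $\mu_X$-equidistributed sequences; since $\mu_X$ is fully supported, every sequence in $E_X$ is also dense in $X$. A standard result (see the discussion preceding Theorem~\ref{thm:gwuniform}) gives $\mu_X^\infty(E_X) = 1$, and similarly for $E_Y$. The maps $\calFX$ and $\calFY$ are continuous, so $\calFX(E_X)$ and $\calFY(E_Y)$ are analytic subsets of $\calR_B$ and thus measurable with respect to the completions of $\calDX$ and $\calDY$; each has full measure. Since $\calDX = \calDY$, the intersection $\calFX(E_X) \cap \calFY(E_Y)$ has full measure, and in particular is nonempty, so we may pick $(r,b)$ in it together with sequences $(x_i) \in E_X$ and $(y_i) \in E_Y$ mapping to $(r,b)$.

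I would then set $\phi_0(x_i) := y_i$. Whenever $x_i = x_j$ we have $r_{ij} = 0$, hence $d_Y(y_i, y_j) = 0$, so $y_i = y_j$ in the Polish space $Y$; this makes $\phi_0$ well-defined, and it is an isometry between $\{x_i\}$ and $\{y_i\}$ since $d_Y(y_i, y_j) = r_{ij} = d_X(x_i, x_j)$. Moreover $\pi_Y(y_i) = b_i = \pi_X(x_i)$, so $\pi_Y \circ \phi_0 = \pi_X$. Density of $(x_i)$ in $X$ and completeness of $Y$ let $\phi_0$ extend uniquely to an isometric embedding $\phi \colon X \to Y$; density of $(y_i)$ in $Y$ makes $\phi$ surjective; continuity of $\pi_X,\pi_Y$ extends the intertwining relation to all of $X$. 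For the measure-preservation, equidistribution of $(x_i)$ gives $\tfrac{1}{n}\sum_{i=1}^n \delta_{x_i} \to \mu_X$ weakly, so $\tfrac{1}{n}\sum_{i=1}^n \delta_{y_i} = \phi_\ast \bigl( \tfrac{1}{n}\sum_{i=1}^n \delta_{x_i} \bigr) \to \phi_\ast \mu_X$; but by equidistribution of $(y_i)$ this limit equals $\mu_Y$, yielding $\phi_\ast \mu_X = \mu_Y$.

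The main obstacle is the measurability step: ensuring that $\calFX(E_X) \cap \calFY(E_Y)$ is nonempty despite being an intersection of non-Borel (merely analytic) images. The cleanest route is to invoke the analytic-set machinery in Polish spaces so that both images are universally measurable of full measure under their respective push-forwards, after which equality $\calDX = \calDY$ forces the intersection to have full measure. Everything else (well-definedness of $\phi_0$, extension, surjectivity, measure-preservation) then reduces to continuity and the density supplied by full support of $\mu_X$ and $\mu_Y$.
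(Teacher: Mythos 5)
Your proposal is correct and follows essentially the same route as the paper's proof: both use the full-measure set of equidistributed sequences, observe that its images under $\calFX$ and $\calFY$ are analytic and hence of full measure in the completions of $\calDX=\calDY$, pick a common augmented distance matrix, and extend the resulting correspondence $x_i \mapsto y_i$ by density (granted by full support) to an isomorphism. In fact you spell out details the paper leaves implicit, notably the well-definedness of the correspondence and the empirical-measure argument for measure preservation.
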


\begin{proof}
Clearly, $\calX \simeq \cal Y$ implies that   $\calDX = \calDY$. Suppose that $\calDX = \calDY$. The subset of $U_X \subseteq X^\infty$ of all equidistributed sequences in $X$ is a Borel measurable set of full measure \cite[Lemma~2.4]{kondo2005probability}. Hence, its image $C_X:=\calFX(U_X)$ is an analytical set that has full measure in the completion of $\calDX$ \cite[Theorem~13.2.6]{dudley2018real}. Define $C_Y$ similarly. By construction, $C_X \cap C_Y$ is of full measure in the completion of $\calDX=\calDY$. Let $(x_n) \in U_X$, $(y_n) \in U_Y$ such that $\calFX((x_n))=\calFY((y_n))$. The map $\phi \colon \{x_n \colon n\geq 1\} \to \{y_n \colon n\geq 1\}$ given by $\phi(x_i) = y_i$ is a well-defined isometry satisfying $\piX(x_i)=\piY(y_i)$. Since $(x_n)$ and $(y_n)$ are dense in $X$ and $Y$, respectively, $\phi$ induces an isometry $\Phi \colon \calX \dto \calY$.
\end{proof}
%---------------------------

On the space $\calR_B$, we also define the (extended) metric
\begin{equation}
\rho ((r_{ij},b_i),(r'_{ij},b'_i)):=\max (\frac{1}{2}\sup_{ij}|r_{ij}-r'_{ij}|,\, \sup_i d_B(b_i,b'_i)).
\end{equation}
The metric $\rho_n$ over $\calRBn$ is defined similarly. However, since $(\calR_B, \rho)$ is not separable, instead of using $\rho$ to define a topology on $\calR_B$, we only employ it to formulate the Wasserstein distance in $\calR_B$. The next lemma shows that $\rho$ is sufficiently regular for the Wasserstein distance so defined to satisfy some desirable properties. 
%---------------------------
\begin{lemma}\label{lemma: lower-semi-continuity of partial}
The extended function $\rho \colon \mathcal{R}_B \times \mathcal{R_B}\to [0,\infty]$ is lower semicontinuous.
\end{lemma}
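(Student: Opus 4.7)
The plan is to realize $\rho$ as a pointwise supremum of continuous functions on $\calR_B \times \calR_B$. For each integer $n \geq 1$, define the truncated functional
\[
\rho^{(n)}\bigl((r_{ij},b_i),(r'_{ij},b'_i)\bigr) := \max\!\left( \tfrac{1}{2}\max_{1\leq i,j\leq n}|r_{ij}-r'_{ij}|,\ \max_{1\leq i\leq n} d_B(b_i,b'_i) \right).
\]
Since the weak topology on $\calR_B = \calR \times B^\infty$ is by definition the coarsest making the projections onto $\calR^n$ and onto $B^n$ continuous, each $\rho^{(n)}$ factors through $(\calR_B^n)^2$, and the formula above is a finite combination (max and absolute value) of the continuous coordinate functions $(r_{ij},b_i,r'_{ij},b'_i) \mapsto |r_{ij}-r'_{ij}|$ and $(b_i,b'_i) \mapsto d_B(b_i,b'_i)$. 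Hence $\rho^{(n)}$ is continuous on $\calR_B \times \calR_B$.

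Next, observe that the sequence $\rho^{(n)}$ is nondecreasing in $n$, and that
\[
\sup_n \rho^{(n)}\bigl((r_{ij},b_i),(r'_{ij},b'_i)\bigr) \;=\; \max\!\left( \tfrac{1}{2}\sup_{i,j}|r_{ij}-r'_{ij}|,\ \sup_i d_B(b_i,b'_i) \right) \;=\; \rho\bigl((r_{ij},b_i),(r'_{ij},b'_i)\bigr),
\]
where the supremum is taken in $[0,\infty]$. Thus $\rho$ is the pointwise supremum of a family of continuous (hence lower semicontinuous) $[0,\infty]$-valued functions on $\calR_B \times \calR_B$, and the supremum of any family of lower semicontinuous functions is lower semicontinuous. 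This proves the claim.

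The argument has no real obstacle; the only subtle point worth double-checking is that the weak topology on $\calR_B$ makes each individual coordinate $(r_{ij})$ and $(b_i)$ continuous, which is immediate from the definition via the continuity of $\pi_n \colon \calR \to \calR^n$ and of the canonical projections $B^\infty \to B^n$.
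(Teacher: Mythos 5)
Your proof is correct and follows essentially the same route as the paper: the truncated functionals $\rho^{(n)}$ are exactly $\rho_n$ composed with the projections $\calRB \to \calRBn$, and the paper likewise notes that $\rho_n \circ \pi_n \uparrow \rho$ pointwise and invokes lower semicontinuity of a pointwise supremum of continuous functions. You simply spell out the continuity of the truncations in more detail than the paper does.
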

\begin{proof}
Let $\pi_n: \calRB \to \calRB^n$ denote the projection map. Note that $\rho_n \circ \pi_n \uparrow \rho$ pointwise. Hence, as a pointwise supremum of a sequence of continuous functions, $\rho$ is lower semicontinuous.
\end{proof}

In the discussion below, the $p$-Wasserstein distances $\dwp (\calDX, \calDY)$ and $\dwp (\calDXn, \calDYn)$ are taken with respect to the distance functions $\rho$ and $\rho_n$, respectively.

\begin{theorem}\label{thm: dWp(Dx,Dy)=dGW(infty)}
Let $\calX$ and $\calY$ be bounded $mm$-fields over $B$. Then, for any $1 \leq p \leq \infty$, we have
\begin{equation*}\label{dgw=dwp}
    \lim_{n \to \infty} \dwp(\calDXn,\calDYn) = \dwp(\mathcal{D}_\mathcal{X},\mathcal{D}_\mathcal{Y}) = \dgwi(\mathcal{X},\mathcal{Y}) .
\end{equation*}
\end{theorem}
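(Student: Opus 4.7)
The plan is to decompose the claim into two equalities, (i) $\dwp(\calDX,\calDY)=\dgwi(\calX,\calY)$ and (ii) $\lim_n\dwp(\calDXn,\calDYn)=\dwp(\calDX,\calDY)$, and treat them in that order. The bridge between Wasserstein costs on the ADM space $\calRB$ and the Gromov-Wasserstein cost between the underlying $mm$-fields is the ADM map $\calFX\times\calFY$, together with the characterization of $\dgwi$ via equidistributed sequences from Theorem \ref{thm:gwuniform}; the passage from finite $n$ to the infinite case rests on the monotone pointwise recovery $\rho_m\circ(\pi_m\times\pi_m)\uparrow\rho$ already used in Lemma \ref{lemma: lower-semi-continuity of partial}.

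For the upper bound in (i), let $P^*$ be an optimal coupling realizing $\dgwi(\calX,\calY)$ (Proposition \ref{prop:gwrealization}) and set $Q^*:=(\calFX\times\calFY)_*(P^*)^\infty$, a coupling of $\calDX,\calDY$. Because $(P^*)^\infty$-almost every sequence $(x_i,y_i)$ is $P^*$-equidistributed, its closure equals $\mathrm{supp}(P^*)$, and continuity of $\mXY$ and $\dXY$ combined with Remark \ref{rem:support} gives $\rho(\calFX((x_i)),\calFY((y_i)))=\costiP^{*}=\dgwi(\calX,\calY)$ almost surely. Hence $\rho\le\dgwi$ $Q^*$-a.s., so $\dwp(\calDX,\calDY)\le\dgwi$ for every $p\in[1,\infty]$. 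For the reverse, given any coupling $Q$ of $\calDX,\calDY$, disintegrate $\muX^\infty,\muY^\infty$ over their ADM pushforwards and form the lift
\[
\tilde Q:=\int(\muX^\infty|_{\calFX=R})\otimes(\muY^\infty|_{\calFY=R'})\,dQ(R,R').
\]
This $\tilde Q$ has marginals $\muX^\infty,\muY^\infty$ and satisfies $(\calFX\times\calFY)_*\tilde Q=Q$, so both coordinate sequences are equidistributed $\tilde Q$-a.s. Theorem \ref{thm:gwuniform} then yields $\rho\ge\dgwi$ $Q$-a.s., whence $\dwp(\calDX,\calDY)\ge\dgwi$.

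For (ii), the easy direction $\dwp(\calDXn,\calDYn)\le\dwp(\calDX,\calDY)$ follows by pushing any coupling of $\calDX,\calDY$ through $\pi_n\times\pi_n$ and using $\rho_n\circ(\pi_n\times\pi_n)\le\rho$. For the reverse, pick $\epsilon$-optimal couplings $Q_n$ for $\dwp(\calDXn,\calDYn)$ and extend by disintegration,
\[
\tilde Q_n:=\int(\calDX|_{\pi_n=R_n})\otimes(\calDY|_{\pi_n=R'_n})\,dQ_n(R_n,R'_n),
\]
a coupling of $\calDX,\calDY$ with $(\pi_n\times\pi_n)_*\tilde Q_n=Q_n$. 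The fixed Polish marginals make $\{\tilde Q_n\}$ tight; extract a subsequence $\tilde Q_{n_k}\Rightarrow\tilde Q$. For any fixed $m$ and $n_k\ge m$, the pointwise bound $\rho_m\circ(\pi_m\times\pi_m)\le\rho_{n_k}\circ(\pi_{n_k}\times\pi_{n_k})$ gives
\[
\int(\rho_m\circ(\pi_m\times\pi_m))^p\,d\tilde Q_{n_k}\le\int\rho_{n_k}^p\,dQ_{n_k}\le\dwp(\calDX^{n_k},\calDY^{n_k})^p+\epsilon.
\]
Since $\rho_m\circ(\pi_m\times\pi_m)$ is continuous and (by boundedness of $\calX,\calY$) bounded on the supports at play, passing to the weak limit in $k$ yields $\int(\rho_m\circ(\pi_m\times\pi_m))^p\,d\tilde Q\le L^p$ with $L:=\lim_n\dwp(\calDXn,\calDYn)$. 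Monotone convergence in $m$ then produces $\int\rho^p\,d\tilde Q\le L^p$, and since $\tilde Q$ couples $\calDX,\calDY$ this gives $\dwp(\calDX,\calDY)\le L$. The case $p=\infty$ is handled analogously by replacing $L^p$ norms with essential suprema throughout.

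The main obstacle is the lower bound in (ii). Direct projection of couplings of $\calDX,\calDY$ goes in the wrong direction, so one must instead lift the finite optimizers $Q_n$ back to couplings of the infinite ADM distributions; the extensions $\tilde Q_n$ unavoidably randomize the tail beyond the first $n\times n$ block, and the role of the tightness plus weak-limit plus monotone-convergence argument is precisely to strip away this tail so that the limiting coupling $\tilde Q$ inherits only the finite-block cost bounds, which in the limit recover $\dwp(\calDX,\calDY)$.
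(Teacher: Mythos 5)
Your argument is correct in substance, but it reaches the conclusion by a genuinely different route than the paper. The paper never proves either of your two equalities head-on: it closes the cyclic chain $\lim_n\dwp(\calDXn,\calDYn)\le\dwp(\calDX,\calDY)\le\dgwi(\calX,\calY)\le\lim_n\dwp(\calDXn,\calDYn)$, where the first inequality is your easy projection step, the second is the same pushforward of $P^\infty$ under the ADM maps that you use, and the third --- the only hard one --- is proved directly at the finite level: by Proposition \ref{prop:gwrealization} choose $q$ with $\dgwi\le\dgw{q}+\epsilon$, use Proposition \ref{prop:almostuniform} to show that the $n$-tuples whose empirical measures are $\dwa{q}$-close to $\muX$, $\muY$ give sets of mass at least $1-2\epsilon$ under any coupling of $\calDXn,\calDYn$, and use Proposition \ref{prop:dndifference} to bound $\rho_n\ge\dgw{q}(\calX,\calY)-2\epsilon$ on those sets; no lifting of couplings and no weak-limit argument on $\calRB$ is needed, but the individual identities emerge only once the whole cycle is closed. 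You instead prove the two identities separately: for $\dwp(\calDX,\calDY)\ge\dgwi$ you lift an arbitrary coupling of the ADM laws to a coupling of $\muX^\infty,\muY^\infty$ by disintegration and apply Theorem \ref{thm:gwuniform} pointwise to pairs of equidistributed sequences; for $\dwp(\calDX,\calDY)\le\lim_n\dwp(\calDXn,\calDYn)$ you lift near-optimal finite couplings, use tightness (as in Proposition \ref{prop:couplings}) and Prokhorov, and pass to the limit via $\rho_m\circ(\pi_m\times\pi_m)\uparrow\rho$ and monotone convergence. Both lifts are legitimate because $X^\infty$, $Y^\infty$ and $\calRB$ (a closed subspace of $\R^{\mathbb{N}\times\mathbb{N}}\times B^\infty$ in the product topology) are Polish, so regular conditional probabilities exist; in the weak-limit step you can either truncate the continuous cost using boundedness of the fields, as you indicate, or more simply invoke lower semicontinuity of integrals of nonnegative lower semicontinuous costs under weak convergence, in the spirit of the paper's use of Villani's Lemma 4.3, which removes even that caveat; and your sketch for $p=\infty$ goes through via the Portmanteau theorem applied to the open superlevel sets of $\rho_m\circ(\pi_m\times\pi_m)$ and a countable union over $m$. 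What your route buys is a self-contained proof of each equality --- in particular a direct proof that the finite-block Wasserstein distances converge to the infinite one, independent of the Gromov--Wasserstein identification --- at the price of heavier measure-theoretic machinery; the paper's route is lighter, resting only on the quantitative empirical estimates of its appendix.
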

%---------------------------
\begin{proof}
 The projection map $\pi_{n}^{n+1} \colon \calRB^{n+1} \to \calRBn$ is $1$-Lipschitz and has the property that
 \begin{equation}
 (\pi_{n}^{n+1})_\ast (\calDX^{n+1}) = \calDXn \quad \text{and} \quad  (\pi_{n}^{n+1})_\ast (\calDY^{n+1}) = \calDYn \,.
 \end{equation}
 This implies that $\dwp(\calDXn, \calDYn) \leq \dwp(\calDX^{n+1},\calDY^{n+1})$. By a similar argument, we get $\dwp(\calDXn,\calDYn) \leq \dwp(\calDX,\calDY)$. Therefore, we have
\begin{equation}
\begin{split}
    \lim_{n \to \infty} \dwp(\calDXn,\calDYn) = \sup_n \dwp(\calDXn,\calDYn) \leq \dwp(\calDX,\calDY).
\end{split}
\end{equation}
Since $\rho$ is lower semicontinuous by Lemma \ref{lemma: lower-semi-continuity of partial}, using an argument similar to the proof of Proposition \ref{prop:gwrealization}, one can show that $\dwi(\calDX,\calDY) = \lim_{p \to \infty} \dwp(\calDX,\calDY)$. Therefore, without loss of generality, we can assume that $p<\infty$.

Now, we show that $\dwp(\calDX,\calDY) \leq \dgwi(\calX,\calY)$. Let $P$ be the optimal coupling between $\mu_X$ and $\mu_Y$ realizing $\dgwi(\calX,\calY)$. Let $\psiX: (X \times Y)^\infty \to \calRB$ be the map given by $(x_n,y_n)_{n=1}^\infty \mapsto \calFX((x_n)_{n=1}^\infty)$. Define $\psiY$ similarly. Then $Q:=(\psiX,\psiY)_*(P^\infty)$ is a coupling between $\calDX$ and $\calDY$. We have
\begin{equation}
\begin{split}
    \dwp(\calDX,\calDY) &\leq \bigg(\int \rho^p dQ \bigg)^{1/p} = \bigg(\int_{\sP^\infty} \rho^p(\psiX((x_n,y_n)_n),\psiY((x_n,y_n)_n) \dP^\infty ((x_n,y_n)_n) \bigg)^{1/p} \\
    &= \bigg(\int_{(\sP)^\infty} \max\big(\frac{1}{2}\sup_{i,j} \mXY(x_i,y_i,x_j,y_j), \sup_{i} \dXY(x_i,y_i)\big)^p \dP^\infty((x_n,y_n)_n) \bigg)^{1/p} \\
    &\leq \costiP = \dgwi(\calX,\calY).
\end{split}
\end{equation}
It remains to show that $\dgwi(\calX,\calY) \leq \lim_N \dwp(\calDXn,\calDYn)$. Let $0 < \epsilon < 1/2$. By Proposition \ref{prop:gwrealization}, there exists $1\leq q < \infty$ so that $\dgwi(\calX,\calY) \leq \dgw{q}(\calX,\calY) + \epsilon$. Let 
\begin{equation}
\unxqe:=\{(x_i) \in X^n \colon \dwa{q}(\muX, \sum_{i=1}^n \delta_{x_i}/n) \leq \epsilon \} .
\end{equation}
Define $\unyqe$ similarly. By Proposition \ref{prop:almostuniform}, if $n$ large enough, then $\muX^n(\unxqe) \geq 1-\epsilon$ and $\muY^n(\unyqe) \geq 1 - \epsilon$. If we define $\cnxqe:=\calFXn(\unxqe)$ and $\cnyqe:=\calFXn(\unyqe)$, both of these sets are analytical, hence measurable in the completion of $\calDXn$  and $\calDYn$, respectively \cite[Theorem~13.2.6]{dudley2018real}. Moreover, the measures of these sets are $\geq 1-\epsilon$. Let $P$ be the coupling realizing $\dwp(\calDXn,\calDYn)$. Note that we have 
\begin{equation}
\begin{split}
    P(\cnxqe \times \cnyqe) \geq 1 - 2\epsilon.
\end{split}
\end{equation}
By Proposition \ref{prop:dndifference}, we also have 
\begin{equation}
    \rho_n|_{\cnxqe \times \cnyqe} \geq \dgw{q}(\calX,\calY) - 2\epsilon \geq \dgwi(\calX,\calY)-3\epsilon.
\end{equation}
Therefore,
\begin{equation}
    \dwp(\calDXn,\calDYn) = \bigg(\int \rho_n^p \dP \bigg)^{1/p} \geq (\dgwi(\calX,\calY)-3\epsilon)(1-2\epsilon)^{1/p}.
\end{equation}
This implies that 
\begin{equation}
    \lim_{n \to \infty} \dwp(\calDXn,\calDYn) \geq (\dgwi(\calX,\calY)-3\epsilon)(1-2\epsilon)^{1/p}.
\end{equation}
Since $0 < \epsilon < 1/2$ is arbitrary, we get
\begin{equation}
    \lim_{n \to \infty} \dwp(\calDXn,\calDYn) \geq \dgwi(\calX,\calY),
\end{equation}
as claimed.
\end{proof}

\begin{corollary}
Let $P$ be the optimal coupling realizing $\dgwi(\calX,\calY)$. Let $Q$ be the coupling between $\calDX$ and $\calDY$ induced by $P$. Then, for all $p\geq 1$, $Q$ is the optimal coupling realizing $\dwp(\calDX,\calDY)$. Furthermore, if $(x_n,y_n)$ is a uniformly distributed sequence with respect to $P$ in $\sP$, then
\begin{equation}
    \rho (\calFX((x_n)),\calFY((y_n))=\dgwi(\calX,\calY).
\end{equation}
\end{corollary}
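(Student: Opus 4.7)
The plan is to extract the corollary directly from the proof of Theorem \ref{thm: dWp(Dx,Dy)=dGW(infty)} together with a density-of-equidistributed-sequences argument. For the first assertion, I would first note that in the proof of that theorem the coupling $Q := (\psiX,\psiY)_\ast(P^\infty)$ was shown to satisfy
\begin{equation*}
\bigg(\int \rho^p\, dQ\bigg)^{1/p} \leq \costiP = \dgwi(\calX,\calY),
\end{equation*}
because the integrand is bounded pointwise on $(\sP)^\infty$ by the essential supremum defining $\dgwi$. On the other hand, by definition $\dwp(\calDX,\calDY) \leq (\int \rho^p\,dQ)^{1/p}$, and by Theorem \ref{thm: dWp(Dx,Dy)=dGW(infty)} the left-hand side equals $\dgwi(\calX,\calY)$. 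Squeezing these inequalities forces $(\int \rho^p\, dQ)^{1/p} = \dwp(\calDX,\calDY)$, so $Q$ is optimal. The case $p = \infty$ follows from the same squeeze, since $\rho \leq \dgwi(\calX,\calY)$ holds $Q$-almost surely.

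For the second assertion, I would use that an equidistributed sequence $(x_n,y_n)$ with respect to $P$ in $\sP$ is dense in $\sP$ (standard fact: the support of $P$ is the smallest closed set of full measure, and the empirical measures converge weakly to $P$, which forces density). The projections $(x_n)$ and $(y_n)$ are then equidistributed in $X$ and $Y$ with respect to $\muX$ and $\muY$ respectively, and are dense in $\sP$ as a pair. Since $\mXY$ is continuous on $(X \times Y)^2$ and $\dXY$ is continuous on $X \times Y$, taking suprema over the dense subset $\{(x_i,y_i)\}_i$ recovers the essential suprema over $\sP$ and $\sPP$. Concretely,
\begin{equation*}
\sup_{i,j} \mXY(x_i,y_i,x_j,y_j) = \sup_{\sPP} \mXY
\quad\text{and}\quad
\sup_i \dXY(x_i,y_i) = \sup_{\sP} \dXY.
\end{equation*}
Plugging these into the definition of $\rho$ gives
\begin{equation*}
\rho(\calFX((x_n)),\calFY((y_n))) = \max\!\left(\tfrac{1}{2}\sup_{\sPP}\mXY,\ \sup_{\sP}\dXY\right) = \costiP = \dgwi(\calX,\calY),
\end{equation*}
where the last equality uses that $P$ was chosen as an optimal coupling (which exists by Proposition \ref{prop:gwrealization}).

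There is no real obstacle here; the only subtle point is the interchange between essential supremum on $\sPP$ and the pointwise supremum over an equidistributed sample, which is why I would invoke continuity of $\mXY$ and $\dXY$ together with Remark \ref{rem:support}. Everything else is a direct bookkeeping step from the proof of Theorem \ref{thm: dWp(Dx,Dy)=dGW(infty)}.
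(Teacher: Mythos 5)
Your argument is correct. The first assertion is handled exactly as in the paper: the proof of Theorem \ref{thm: dWp(Dx,Dy)=dGW(infty)} already shows $\dwp(\calDX,\calDY) \leq \big(\int \rho^p\, dQ\big)^{1/p} \leq \dgwi(\calX,\calY)$, and the theorem's equality collapses the chain, so $Q$ is optimal (your remark that $\rho \leq \dgwi(\calX,\calY)$ holds $Q$-a.s.\ disposes of $p=\infty$ the same way). For the second assertion you take a genuinely different route: the paper simply cites the proof of Theorem \ref{thm:gwuniform}, where the upper bound comes from the sample points lying in $\sP$ and the lower bound comes from the approximation by finite empirical $mm$-fields, Wasserstein convergence of empirical measures, and the $p\to\infty$ limit of Proposition \ref{prop:gwrealization}. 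You instead observe that a $P$-equidistributed sequence contained in $\sP$ is dense in $\sP$ (Portmanteau applied to open sets meeting the support), that $\mathrm{supp}(P\otimes P)=\sP\times\sP$, and that continuity of $\mXY$ and $\dXX$-type functions lets you replace suprema over the support by suprema over the sample, giving $\rho(\calFX((x_n)),\calFY((y_n)))=\costiP=\dgwi(\calX,\calY)$ directly. Your version is more self-contained and pinpoints exactly which regularity is used (continuity of the cost functions plus density of the sample), at the cost of having to justify the density fact and the product-support identity; the paper's version is shorter but leans on the full machinery already deployed in Theorem \ref{thm:gwuniform}. One typographical note: the function controlling the second coordinate is $\dXY$, not a new object, so your identities $\sup_{i,j}\mXY(x_i,y_i,x_j,y_j)=\sup_{\sPP}\mXY$ and $\sup_i \dXY(x_i,y_i)=\sup_{\sP}\dXY$ are exactly what is needed and are justified by your density argument.
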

\begin{proof}
    Note that $Q$ is used in the proof of Theorem \ref{thm: dWp(Dx,Dy)=dGW(infty)} to show $\dwp(\calDX,\calDY)\leq \dgwi(\calX,\calY)$. But since we have equality, $Q$ is the optimal coupling. The equality follows from the proof of Theorem \ref{thm:gwuniform}.
\end{proof}

\begin{remark}\label{rem:discretezation}
By Theorem $\ref{thm: dWp(Dx,Dy)=dGW(infty)}$, $\dwp(\calDXn,\calDYn)$ can be used as an approximation to $\dgwi(\calX,\calY)$. To discretize this approximation, one can take i.i.d. samples from $(X^n,\mu^n)$ and $(Y^n,\nu^n)$ and form empirical measures $\calE_{n,X}$ and $\calE_{n,Y}$. Then, $\dwp(((\calFXn)_*(\calE_{n,X}), (\calFYn)_*(\calE_{n,Y}) )$ can be taken as an approximation to $\dwp(\calDXn,\calDYn)$.
\end{remark}

\begin{remark}\label{rem:p-independence}
By Theorem \ref{thm: dWp(Dx,Dy)=dGW(infty)}, $\dwp(\calDX,\calDY)$ is independent of $p \geq 1$. This can be explained as follows. Since we are using the sup-distance $\rho$ on $\calRB$ and almost every sequence in a metric measure space is uniformly distributed, if $\dwp(\calDX,\calDY) \leq r$, then there are uniformly distributed sequences in $\calX, \calY$ whose augmented distance matrices are $r$-close to each other, which forces $\dwa{q}(\calDX,\calDY) \leq r$ for any $q \geq 1$.
\end{remark}

The following theorem gives a geometric representation of the isomoprhism classes of $mm$-fields via the Urysohn field.

\begin{theorem}\label{prop:mmrespresentation}
Let $\fmm{I}_B$ denote the moduli space of isometry classes of compact $mm$-fields over $B$ endowed with the distance $\dgwi$. Let $\fmm{G}_B$ be the group of automorphisms of the Urysohn field $\fmm{U}_B$ and $\lawsB$ be the set of compactly supported laws on $\urysohnB$, endowed with the distance $\dwi$. The group $\fmm{G}_B$ acts on $\lawsB$ by $g\cdot \mu:=g_*(\mu)$. Then, $\fmm{I}_B$ is isomorphic to the orbit space of this action; that is, $\fmm{I}_B\simeq \lawsB / \fmm{G}_B$, where the orbit space is equipped with the quotient metric, as in \eqref{eq:autb}, which can be expressed as
\begin{equation*}
        d([\mu],[\nu])=\inf_{g \in \fmm{G}_B} \dwi(\mu,g_*(\nu)).
\end{equation*} 
\end{theorem}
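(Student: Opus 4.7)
The plan mirrors the proof of Theorem \ref{thm;fgh}, with the Hausdorff distance replaced by the Wasserstein distance $\dwi$. First, I would define $\Psi \colon \lawsB \to \fmm{I}_B$ by sending a compactly supported law $\mu$ on $U$ to the isometry class of the $mm$-field $(\mathrm{supp}\,\mu, B, \pi_U|_{\mathrm{supp}\,\mu}, \mu)$. Universality of $\mm{U}_B$ (Proposition \ref{prop:urysohnuniversal}) provides, for any compact $mm$-field $\calX = (X, B, \pi_X, \mu_X)$, an isometric embedding $\iota \colon X \to U$ over $B$, so that $\iota_\ast\mu_X \in \lawsB$ and $\Psi(\iota_\ast\mu_X) = [\calX]$, establishing surjectivity. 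Any $g \in \fmm{G}_B$ restricts to a measure-preserving isometry over $B$ between $\mathrm{supp}\,\mu$ and $\mathrm{supp}\,g_\ast\mu$, so $\Psi(\mu) = \Psi(g_\ast\mu)$ and $\Psi$ factors through a surjection $\bar\Psi \colon \lawsB/\fmm{G}_B \to \fmm{I}_B$. Conversely, any measure-preserving isometry over $B$ between $\mathrm{supp}\,\mu$ and $\mathrm{supp}\,\nu$ extends to an automorphism of $\mm{U}_B$ via Proposition \ref{compact homogeneity}, yielding injectivity of $\bar\Psi$.

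Next, I would prove the isometry identity
\[
\dgwi(\Psi(\mu), \Psi(\nu)) = \inf_{g \in \fmm{G}_B} \dwi(\mu, g_\ast\nu).
\]
The inequality ``$\leq$'' is immediate from Proposition \ref{prop:gwembedding} applied with ambient space $U$: for each $g \in \fmm{G}_B$, the inclusion $\mathrm{supp}\,\mu \hookrightarrow U$ and the composition $g|_{\mathrm{supp}\,\nu} \colon \mathrm{supp}\,\nu \hookrightarrow U$ are isometric embeddings over $B$, so $\dgwi(\Psi(\mu), \Psi(g_\ast\nu)) \leq \dwi(\mu, g_\ast\nu)$, and $\Psi(g_\ast\nu) = \Psi(\nu)$.

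The reverse inequality is the main obstacle. Set $X = \mathrm{supp}\,\mu$, $Y = \mathrm{supp}\,\nu$, $r = \dgwi(\Psi(\mu), \Psi(\nu))$, and use Proposition \ref{prop:gwrealization} to obtain an optimal coupling $P$ between $\mu$ and $\nu$ realizing $r$. The idea is to isometrically embed $X$ and $Y$ over $B$ into a single compact $B$-field $\mm{Z}$ with $\dwi^Z(\mu, \nu) \leq r$, then transfer the setup to $\mm{U}_B$ via universality and compact homogeneity. Set $Z_0 = X \sqcup Y$ and extend $d_X$, $d_Y$ to a symmetric function $d_Z$ on $Z_0$ by
\[
d_Z(x, y) := \inf_{(x', y') \in \mathrm{supp}\,P} \bigl(d_X(x, x') + r + d_Y(y', y)\bigr), \qquad x \in X,\ y \in Y.
\]
The bounds $|d_X(x_1, x_2) - d_Y(y_1, y_2)| \leq 2r$ on $\mathrm{supp}(P \otimes P)$ and $d_B(\pi_X(x'), \pi_Y(y')) \leq r$ on $\mathrm{supp}\,P$ (both built into the definition of $r$) force $d_Z$ to be a pseudo-metric extending $d_X$ and $d_Y$ and force the combined map $\pi_Z \colon Z_0 \to B$ (equal to $\pi_X$ on $X$ and $\pi_Y$ on $Y$) to be $1$-Lipschitz. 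Passing to the metric identification produces a compact $B$-field $\mm{Z}$ in which the inclusions of $X$ and $Y$ are isometric over $B$, and the coupling $P$ witnesses $\dwi^Z(\mu, \nu) \leq r$. By universality, choose an isometric embedding of $\mm{Z}$ into $\mm{U}_B$ over $B$; by Proposition \ref{compact homogeneity}, its restrictions to $X$ and $Y$ differ from the original inclusions $X, Y \hookrightarrow U$ by automorphisms $g, h \in \fmm{G}_B$, respectively. Setting $k = g^{-1} h$ and invoking isometry invariance of $\dwi$ then yields $\dwi(\mu, k_\ast\nu) \leq r$, completing the proof.
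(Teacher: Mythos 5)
Your proposal is correct and follows essentially the same route as the paper: the identification comes from universality plus compact homogeneity of $\mm{U}_B$, the inequality $\dgwi \leq d$ from Proposition \ref{prop:gwembedding}, and the reverse inequality from an optimal coupling (Proposition \ref{prop:gwrealization}) together with the glued field $\mm{Z}$ --- which is exactly the paper's Proposition \ref{prop:couplingembedding}, re-derived inline in your argument --- followed by an embedding into $\mm{U}_B$ and matching back via Proposition \ref{compact homogeneity}. The only cosmetic difference is that you define the map from laws to isometry classes of fields rather than in the reverse direction.
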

\begin{proof}
Given a compact $mm$-field $\calX$ over $B$ and an isometric embedding $\iota: \calX \to \urysohnB$, we have $(\iota)_*(\muX) \in \lawsB$. This induces a well-defined map from $\Psi \colon \fmm{I}_B \to \lawsB / \fmm{G}_B$ because of the compact homogeneity of $\urysohnB$. 
%    By \cite[Lemma~3.3.6]{burago2001course} the quotient metric on $\lawsB / \autoB$ is given by
%    \begin{equation}
%        d([\mu],[\nu])=\inf_{g \in \autoB} \dwi(\mu,g_*(\nu)).
%    \end{equation} 
Therefore, by Proposition \ref{prop:gwembedding}, $\dgwi(\calX,\calY) \leq d(\Psi(\calX),\Psi(\calY))$. To show the opposite inequality, let $P$ be the optimal coupling realizing $\dgwi(\calX,\calY)$. Let $\calZ$ be the $B$-field constructed using $P$ as in Proposition \ref{prop:couplingembedding}. Consider $\muX \sqcup \muY$ as a measure on $Z$ and let $\iota \colon Z \to \urysohnB$ be an isometric embedding. Then,
\begin{equation}
            d(\Psi(\calX),\Psi(\calY)) \leq \dwi(\iota_*(\muX),\iota_*(\muY)) \leq \dwi^Z(\muX,\muY) \leq \dgwi(\calX,\calY),
\end{equation}
establishing the desired inequality.
\end{proof}
%%%%%%%%%%%%%%%%%%%%%%%%%%%%%%%

%------------------

\section{Summary and Discussion} \label{sec:summary}

This paper studied functional data, termed fields, defined on geometric domains. More precisely, the objects of study were 1-Lipschitz functions between Polish metric spaces with the domain possibly equipped with a Borel probability measure. We addressed foundational questions and developed new approaches to the analysis of datasets consisting of fields not necessarily defined on the same domains. We constructed Urysohn fields for a fixed target Polish space $B$; that is, universal and homogeneous objects for fields over $B$. We also constructed ultimate Urysohn fields that have analogous universality and homogeneity properties, but allow the target Polish space to also vary. We defined and investigated the basic properties of a Gromov-Hausdorff distance between compact fields and how it relates to the Hausdorff distance in the Urysohn field via functional isometric embeddings. Similarly, we studied a notion of Gromov-Wasserstein distance between fields with metric-measure domains. 

We also introduced a representation of metric-measure fields as probability distributions on the space of (countably) infinite augmented distance matrices and proved a Reconstruction Theorem that extends to $mm$-fields a corresponding result for $mm$-spaces due to Gromov. This provides a pathway to discrete representations of $mm$-fields via distributions of finite-dimensional augmented distance matrices for which we proved a convergence theorem.

Questions that also are of interest but fall beyond the scope of this paper include: (i) the study of rate of convergence of the probabilistic model based on finite-dimensional augmented distance matrices; (ii) investigation of alternative cost functions in the formulation of the Gromov-Wasserstein distance between $mm$-fields; (iii) the development of computational models derived from augmented distance matrices.
%-------------

\section*{Acknowledgements}

This work was partially supported by NSF grant DMS-1722995.

\appendix
\section{Appendix}\label{sec:appendix}

%----------------------
\begin{proof}[Proof of Lemma \ref{compact injectivity}]
It suffices to construct a sequence $(u_n)$ in $U$ satisfying
\begin{enumerate}[(i)]
\item $\pi_U(u_n) = f(a^\ast)$, for $n \geq 1$;
\item $|d_\ast(a^\ast, a)-d_U(u_n,a)|\leq 2^{-n}$, $\forall a\in A$, where $d_\ast$ is the metric on $A^\ast$;
\item $d_U(u_n,u_{n+1})\leq 2^{-n}$, for $n \geq 1$.
\end{enumerate}
Indeed, letting $u=\lim_{n\to \infty} u_n \in U$, define $\imath \colon A^\ast \to B$ as the identity on $A$ and $\imath (a^\ast) =u$. The map $\imath$ gives the desired one-point extension. Now we proceed to the construction of the sequence $(u_n)$.

Let $A_n$ be an ascending sequence of finite subsets of $A$, where $A_n$ is a $2^{-(n+1)}$-net in $A$, for $n \geq 1$. (This means that the balls of radius $2^{-(n+1)}$ centered at the points in $A_n$ cover $A$.) Let $D_1=A_1$ and denote by $D_1^\ast = D_1\sqcup\{a^\ast\}$, the one-point metric extension of $D_1$ induced by $(A^\ast, d_\ast)$. Since $\mm{U}_B$ is Urysohn, applying the one-point extension property to the field $f|_{D_1^\ast}$, we obtain an isometric embedding $\imath_1 \colon D_1^\ast \to U$ such that $\pi_U \circ \imath_1 = f|_{D_1^\ast}$. Defining $u_1 = \imath_1 (a^\ast) \in U$, we have that $\pi_U (u_1) = f(a^\ast)$ and $d_\ast (a^\ast, a) = d_U (u_1, a)$, for any $a \in A$, so $u_1$ satisfies (i) and (ii) above. Condition (iii) is empty at this stage of the construction. Inductively, suppose that we have constructed  $u_j$, $i \leq j \leq n$, with the desired properties and let
\begin{equation}
D_{n+1} = A_{n+1} \cup \{u_n\} \quad \text{and} \quad D_{n+1}^\ast=  D_{n+1} \sqcup\{a^\ast\}.
\end{equation}
Using the notation $A_{n+1}^\ast = A_{n+1} \cup \{a^\ast\}$, define a metric $d'_\ast \colon D_{n+1}^\ast \times D_{n+1}^\ast \to \real$, as follows: $d'_\ast$ coincides with $d_\ast$ on $A_{n+1}^\ast \times A_{n+1}^\ast $, $d'_\ast (a,u_n) = d_U (a, u_n)$, for every $a \in A_{n+1}$, and
\begin{equation}
d'_\ast(a^\ast, u_n):=\sup_{b\in A_{n+1}}|d_\ast(a^\ast,b)-d_U(u_n,b)|.
\label{eq:metric}
\end{equation}
Define a field $f' \colon D_{n+1}^\ast \to B$ by $f'|_{D_{n+1}} = \pi_U|_{D^{n+1}}$ and $f'(a^\ast) = f(a^\ast)$. Applying the one-point extension property to $f'$ we obtain an isometric embedding $\imath_{n+1} \colon D_{n+1}^\ast \to U$ satisfying $f' = \pi_U\circ \imath_{n+1}$.

Let $u_{n+1} = \imath_{n+1} (a^\ast) \in U$. By construction, $\pi_U(u_{n+1}) = f'(a_\ast) = f (a_\ast)$, so requirement (i) is satisfied. Moreover, $d_U(u_{n+1},b) = d'_\ast(a^\ast, b)= d_\ast(a^\ast, b)$, for any $b \in A_{n+1}$. Since $A_{n+1}$ is a $2^{-(n+2)}$-net in $A$, for each $a \in A$, we can pick $b \in A_{n+1}$ such that $d(a,b) \leq 2^{-(n+2)}$. Then, we have
\begin{equation}
\begin{split}
|d_\ast (a^\ast, a) - d_U(u_{n+1},a)| &\leq |d_\ast (a^\ast, a) - d_U(u_{n+1},b)| + | d_U(u_{n+1},b) - d_U(u_{n+1},a)| \\
&= |d_\ast (a^\ast, a) - d_\ast (a^\ast,b)| + | d_U (u_{n+1},b) - d_U(u_{n+1},a)| \\
&\leq d_\ast(a,b) + d_U (a,b) = 2 d_U (a,b) \leq 2^{-(n+1)} \,.
\end{split}
\end{equation}
This verifies property (ii). By the inductive hypothesis, we also have $|d_\ast (a^\ast, a) - d_U(u_n,a)| \leq 2^{-n}$, for any $a \in A$. Thus, by \eqref{eq:metric},
\begin{equation}
d(u_{n+1}, u_n) = d'_\ast (a^\ast, u_n) = \sup_{b\in A_{n+1}}|d_\ast(a^\ast,b)-d_U(u_n,b)| \leq 2^{-n}.
\end{equation}
This concludes the proof.
\end{proof}
%-----------------

\begin{proposition}\label{prop:couplings}    
    Every sequence in $\cXY$ has a weakly convergent subsequence.
\end{proposition}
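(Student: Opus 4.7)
The plan is to invoke Prokhorov's theorem. Since $X$ and $Y$ are Polish, the individual marginals $\mu_X$ and $\mu_Y$ are tight, and one should leverage this to show that the whole family $C(\mu_X,\mu_Y)$ is tight as a subset of probability measures on the Polish space $X \times Y$. Once tightness is established, Prokhorov gives relative compactness in the weak topology, hence any sequence has a weakly convergent subsequence. A short argument will then show that the limit is still a coupling.

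More concretely, first I would fix $\epsilon > 0$ and, using tightness of $\mu_X$ and $\mu_Y$, choose compact sets $K_X \subseteq X$ and $K_Y \subseteq Y$ with $\mu_X(X \setminus K_X) < \epsilon/2$ and $\mu_Y(Y \setminus K_Y) < \epsilon/2$. Then $K_X \times K_Y$ is compact in $X \times Y$, and for any $P \in C(\mu_X,\mu_Y)$ the union bound
\[
P\bigl((X \times Y) \setminus (K_X \times K_Y)\bigr) \leq P\bigl((X \setminus K_X) \times Y\bigr) + P\bigl(X \times (Y \setminus K_Y)\bigr) = \mu_X(X \setminus K_X) + \mu_Y(Y \setminus K_Y) < \epsilon
\]
shows that $C(\mu_X,\mu_Y)$ is tight. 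By Prokhorov's theorem, any sequence $(P_n) \subseteq C(\mu_X,\mu_Y)$ has a subsequence that converges weakly to some Borel probability measure $P$ on $X \times Y$.

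It remains to verify that the limit $P$ is itself a coupling between $\mu_X$ and $\mu_Y$. Since the coordinate projections $\pi_X \colon X \times Y \to X$ and $\pi_Y \colon X \times Y \to Y$ are continuous, weak convergence is preserved under push-forward: $(\pi_X)_\ast P_n \to (\pi_X)_\ast P$ and $(\pi_Y)_\ast P_n \to (\pi_Y)_\ast P$ weakly. Each $P_n$ being a coupling means $(\pi_X)_\ast P_n = \mu_X$ and $(\pi_Y)_\ast P_n = \mu_Y$ for every $n$, so by uniqueness of weak limits we conclude that $(\pi_X)_\ast P = \mu_X$ and $(\pi_Y)_\ast P = \mu_Y$, that is, $P \in C(\mu_X,\mu_Y)$.

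There is no serious obstacle here: the only slight subtlety is remembering that tightness of the marginals transfers to tightness of the couplings via the product compact sets $K_X \times K_Y$, and that weak convergence survives under the continuous projections so that the limit is automatically a coupling rather than something with smaller mass.
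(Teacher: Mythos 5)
Your proof is correct and follows essentially the same route as the paper: tightness of the marginals gives uniform tightness of $\cXY$ via the product compact set $K_X \times K_Y$, and Prokhorov's theorem yields a weakly convergent subsequence. The only cosmetic difference is that you verify the limit is a coupling directly via continuity of the projections, whereas the paper cites a standard result that $\cXY$ is closed under weak convergence.
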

\begin{proof}
By \cite[Theorem~9.3.7]{dudley2018real}, $\cXY$ is closed under weak convergence. By Prokhorov's Theorem \cite[Theorem~11.5.4]{dudley2018real}, it is enough to show that $C(\muX,\muY)$ is uniformly tight. Since $X$ and $Y$ are Polish, $\muX$ and $\muY$ are tight measures (\cite[Theorem~7.1.4]{dudley2018real}). Let $\epsilon>0$. There are compact subspaces $K_X \subseteq X$ and $K_Y \subseteq Y$ so that $\muX(K_X)>1-\epsilon/2$ and $\muY(K_Y)>1-\epsilon/2$. Then, for any $P$ in $\cXY$, we have 
    \begin{equation}
    \begin{split}
        P(K_X \times K_Y) &= P((K_X \times Y) \cap (X \times K_Y)) \\
        &\geq P(K_X \times Y) + P(X \times K_Y) -1 \\
        &= \muX(K_X) + \muY(K_Y)-1 \geq 1-\epsilon.
    \end{split}
    \end{equation}
    This concludes the proof.
\end{proof}

\begin{proposition}\label{prop:almostuniform}
Let $1\leq p < \infty$ and $(X,\dX,\muX)$ be a metric measure space such that $\mu_X$ has finite moments of order $p$, where $1 \leq p < \infty$. Given an integer  $n>0$ and $\epsilon>0$, let 
\begin{equation*}
\unxpe:=\{(x_i) \in X^n \colon \dwp(\muX, \sum_{i=1}^n \delta_{x_i}/n) \leq \epsilon \} .
\end{equation*}
Then, for $n$ sufficiently large, we have
 \begin{equation*}
 \mu^N(\unxpe) \geq 1 - \epsilon.
 \end{equation*}
\end{proposition}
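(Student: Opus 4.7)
The plan is to deduce this from the almost sure convergence of the empirical measures in the $p$-Wasserstein distance and then convert the a.s.\ statement into the required probabilistic one by a standard measure-theoretic argument. Concretely, I would let $(X_i)_{i\geq 1}$ be the coordinate random variables on the probability space $(X^\infty,\mu_X^\infty)$, so that the $X_i$ are i.i.d.\ with common law $\mu_X$, and set $\mu_n:=\tfrac{1}{n}\sum_{i=1}^n \delta_{X_i}$ to be the associated empirical measure. The goal becomes showing that $d_{W,p}(\mu_X,\mu_n)\to 0$ almost surely, for then $\mu_X^\infty\{(x_i): d_{W,p}(\mu_X,\mu_n)>\epsilon\}\to 0$ and the inclusion $\{(x_i)\in X^n: d_{W,p}(\mu_X,\tfrac{1}{n}\sum_{i=1}^n\delta_{x_i})\leq \epsilon\}\supseteq \pi_n(\{(x_i)\in X^\infty: d_{W,p}(\mu_X,\mu_n)\leq \epsilon\})$ would yield the required lower bound $\mu_X^n(U_{X,p}^{n,\epsilon})\geq 1-\epsilon$ for all sufficiently large $n$.

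For the almost sure convergence, I would combine two classical ingredients. First, by Varadarajan's theorem (the empirical measures of an i.i.d.\ sample on a Polish space converge weakly to the underlying law almost surely), $\mu_n\Rightarrow\mu_X$ with probability one. Second, fixing any basepoint $x_0\in X$, the random variables $d_X(x_0,X_i)^p$ are i.i.d.\ with finite mean (by the finite $p$-th moment hypothesis), so the strong law of large numbers gives
\[
\int d_X(x_0,x)^p\,d\mu_n(x) = \frac{1}{n}\sum_{i=1}^n d_X(x_0,X_i)^p \longrightarrow \int d_X(x_0,x)^p\,d\mu_X(x)
\]
almost surely. Weak convergence plus convergence of $p$-th moments is well known to be equivalent to convergence in the $p$-Wasserstein distance (see, e.g., Villani's \emph{Optimal Transport: Old and New}, Theorem 6.9, cited already in the paper), so $d_{W,p}(\mu_X,\mu_n)\to 0$ almost surely.

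Finally, since a.s.\ convergence implies convergence in probability, for any fixed $\epsilon>0$ we obtain $\mu_X^\infty(A_n^\epsilon)\to 1$, where $A_n^\epsilon:=\{(x_i)\in X^\infty: d_{W,p}(\mu_X,\tfrac{1}{n}\sum_{i=1}^n\delta_{x_i})\leq \epsilon\}$. Because $A_n^\epsilon$ depends only on the first $n$ coordinates, its $\mu_X^\infty$-measure equals $\mu_X^n(U_{X,p}^{n,\epsilon})$, so $\mu_X^n(U_{X,p}^{n,\epsilon})\geq 1-\epsilon$ for all $n$ large enough. I do not anticipate substantial obstacles here: the main point is to invoke the known a.s.\ convergence of empirical measures in $W_p$ under a finite $p$-th moment assumption, and the remaining steps are bookkeeping between the product space $(X^\infty,\mu_X^\infty)$ and its finite-dimensional projections.
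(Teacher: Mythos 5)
Your proposal is correct and follows essentially the same route as the paper: almost sure convergence of the empirical measures to $\mu_X$ in $\dwp$ (via Varadarajan's theorem), conversion to convergence in probability, and identification of the cylinder set $\pi_n^{-1}(\unxpe)$ to pass from $\mu_X^\infty$ to $\mu_X^n$. Your explicit use of the strong law of large numbers for $d_X(x_0,X_i)^p$ together with Villani's Theorem 6.9 to upgrade weak convergence to $W_p$-convergence is in fact a welcome bit of extra care, since the paper's proof invokes the metrizability of $P_p(X)$ by $\dwp$ without spelling out the moment-convergence step.
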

\begin{proof}
 Let $P_p(X)$ denote the set of Borel probability measures on $X$ with finite moments of order $p$. $P_p(X)$  is metrizable by $\dwp$, and the corresponding notion of convergence is weak convergence \cite[Theorem~6.9]{villani2008optimal}. Furthermore, $P_p(X)$ is complete and separable \cite[Theorem~6.18]{villani2008optimal}. Let $\pi_n \colon X^\infty \to X^n$ denote the projection onto the first $n$ coordinates and $\psi_n \colon X^\infty \to P_p(X)$ be the map given by
 \begin{equation}
\psi_n((x_i)):= \sum_{i=1}^n \delta_{x_i}/n.
 \end{equation}
 This is a continous map. By Varadarajan Theorem \cite[Theorem~11.4.1]{dudley2018real}, $(\psi_n)$ converges to $\muX$ almost surely. By \cite[Theorem~9.2.1]{dudley2018real}, $(\psi_n)$ converges to $\mu$ in probability. Hence, for $n$ large enough, 
\begin{equation}
1 - \epsilon \leq \mu^\infty(\dwp(\muX,\psi_n) \leq \epsilon) = \mu^\infty(\pi_n^{-1}(\unxpe))=\mu^n(\unxpe),
\end{equation}
as desired.
\end{proof}

\begin{proposition}\label{prop:dndifference}
Let $\calX$ and $\calY$ be $mm$-fields over $B$. For $\epsilon >0$, let $\unxpe$ and $\unype$ be defined as in Proposition \ref{prop:almostuniform}. If $(x_i) \in \unxpe$, and $(y_i) \in \unype$, then
 \begin{equation*}
 \rho_n (\calFXn((x_n)), \calFYn((y_n))) \geq \dgwp(\calX,\calY) - 2\epsilon.
 \end{equation*}
\end{proposition}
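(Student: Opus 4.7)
The plan is to replace the infinite‐dimensional fields $\calX$ and $\calY$ with their finite samples, compare these finite models via the diagonal coupling, and glue the three pieces together with a triangle inequality for $\dgwp$.

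First, from the sequence $(x_1,\ldots,x_n)\in \unxpe$ I would build the finite $mm$-field $\calE_n=(\{1,\ldots,n\}, B, \pi_E, \nu_n)$ with pseudo-metric $d_{E_n}(i,j)=\dX(x_i,x_j)$, normalized counting measure $\nu_n=\tfrac1n\sum_i\delta_i$, and $1$-Lipschitz map $\pi_E(i)=\piX(x_i)$; define $\calF_n$ analogously from $(y_i)\in\unype$. The tautological mapping $i\mapsto x_i$ gives an isometric embedding of $\calE_n$ into $\calX$ over $B$ pushing $\nu_n$ to $\tfrac1n\sum_i\delta_{x_i}$, so Proposition \ref{prop:gwembedding} (applied with $\mathcal Z=\calX$ and the identity) yields
\[
\dgwp(\calX,\calE_n)\ \leq\ \dwp\!\left(\muX,\ \tfrac1n\textstyle\sum_{i=1}^n\delta_{x_i}\right)\ \leq\ \epsilon,
\]
and symmetrically $\dgwp(\calY,\calF_n)\leq \epsilon$.

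Second, I plug in the diagonal coupling $P\in C(\nu_n,\nu_n)$ defined by $P(\{(i,i)\})=1/n$ to bound $\dgwp(\calE_n,\calF_n)$ from above. Since the $L^p$ average of a finite family is dominated by its supremum, the two integrals appearing in Definition \ref{def:fwd} are controlled by
\[
\tfrac12\bigg(\int m_{E,F}^p\,dP\otimes P\bigg)^{1/p}\!\leq\ \tfrac12\sup_{i,j}|\dX(x_i,x_j)-\dY(y_i,y_j)|, \qquad \bigg(\int d_{E,F}^p\,dP\bigg)^{1/p}\!\leq\ \sup_i\dB(\piX(x_i),\piY(y_i)).
\]
Taking the maximum of these two terms, I recognize the right‐hand side as exactly $\rho_n(\calFXn((x_i)),\calFYn((y_i)))$, so
\[
\dgwp(\calE_n,\calF_n)\ \leq\ \rho_n(\calFXn((x_i)),\calFYn((y_i))).
\]

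Finally, applying the triangle inequality for $\dgwp$ (which holds by the same argument as in the classical $mm$-space case, cf.\ \cite{memoli2011gromov}, Theorem 5.1) gives
\[
\dgwp(\calX,\calY)\ \leq\ \dgwp(\calX,\calE_n)+\dgwp(\calE_n,\calF_n)+\dgwp(\calF_n,\calY)\ \leq\ 2\epsilon+\rho_n(\calFXn((x_i)),\calFYn((y_i))),
\]
which rearranges to the desired lower bound on $\rho_n$. The only non‐routine step is the bound $\dgwp(\calE_n,\calF_n)\leq\rho_n(\cdot,\cdot)$ via the diagonal coupling; everything else is a direct invocation of results already established in Section \ref{sec:mmfields}. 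I do not anticipate serious obstacles, as the proof follows the blueprint already used in the proof of Theorem \ref{thm:gwuniform}.
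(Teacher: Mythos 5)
Your proposal is correct and follows essentially the same route as the paper: build the empirical $mm$-fields from the samples, bound $\dgwp(\calX,\calE_n)$ and $\dgwp(\calY,\calF_n)$ by $\epsilon$ via Proposition \ref{prop:gwembedding}, bound $\dgwp(\calE_n,\calF_n)$ by $\rho_n$ using the diagonal coupling, and conclude by the triangle inequality. The only difference is cosmetic — you spell out the triangle inequality step that the paper applies implicitly in its final chain of inequalities.
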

\begin{proof}
Let $\calE_X$ be the $mm$-field with underlying set $\{1,\dots,n \}$ equipped with the (pseudo)-metric given by $d_E(i,j)=\dX(x_i,x_j)$, normalized counting measure, and $1$-Lipschitz function $i \mapsto \piX(x_i)$. Similarly, define $\calE_Y$ using $(y_n)$. Note that, by Proposition \ref{prop:gwembedding}, $\dgwp(\calE_X,\calX) \leq \epsilon$ and $\dgwp(\calE_Y,\calY) \leq \epsilon$. If $P$ is the diagonal coupling between the measures of $\calE_X$ and $\calE_Y$, then we have
\begin{equation}
\begin{split}
        \rho_n (\calFXn((x_i)), \calFYn((y_i))) &= \max \left\{  \frac{1}{2} \sup_{\sPP} m_{E_X,E_Y}, \sup_{\sP} d_{E_X,E_Y} \right\} \\
        &\geq \dgwp(\calE_X,\calE_Y) \geq \dgwp(\calX,\calY) - 2\epsilon\,,
\end{split}
\end{equation}  
as claimed.
\end{proof}

\begin{proposition}\label{prop:couplingembedding}
Let $\calX$, $\calY$ be $mm$-fields over $B$ and $P$ be a coupling between $\muX$ and $\muY$, and 
\begin{equation*}
r \colon =\costiP < \infty.
\end{equation*}
 Let $Z= X \sqcup Y$ and define $\pi_Z\colon Z \to B$ by $\piZ|_X:=\piX$ and $\piZ|_Y=\piY$. If $\dZ \colon Z \times Z \to [0,\infty)$ is given by $\dZ|_{X \times X}\colon=\dX$, $\dZ|_{Y \times Y} \colon= \dY$, and 
 \begin{equation*}
 \dZ(x,y)=\dZ(y,x):=r+\inf_{(x',y') \in \sP}\dX(x,x')+\dY(y,y') \,
 \end{equation*}
 for $x \in X$ and $y \in Y$, then $\dZ$ is a metric on $Z$ and $\piZ$ is $1$-Lipschtiz with respect to $\dZ$. Furthermore, $\dwi(\muX,\muY) \leq r$, where the Wasserstein distance is evaluated in $Z$.
\end{proposition}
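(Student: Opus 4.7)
The plan is to verify three things in turn: that $\dZ$ satisfies the metric axioms, that $\piZ$ is $1$-Lipschitz with respect to $\dZ$, and that the given coupling $P$ itself witnesses the Wasserstein bound $\dwi(\muX,\muY) \leq r$ when $Z$ is endowed with $\dZ$.

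For the metric axioms, symmetry and vanishing on the diagonal are built into the definition, and $\dZ(x,y) \geq r$ for $x \in X$ and $y \in Y$ gives positive definiteness (or, when $r=0$, a pseudo-metric in the degenerate case). Triangle inequalities that keep all three points on the same side of the disjoint union are immediate from $\dX$ and $\dY$, so only the mixed cases need attention. In the case with two points $x_1,x_2 \in X$ and one $y \in Y$, the short direction $\dZ(x_1,y) \leq \dX(x_1,x_2) + \dZ(x_2,y)$ follows by inserting $\dX(x_1,x_2)$ inside the infimum and using the triangle inequality for $\dX$. The long direction $\dX(x_1,x_2) \leq \dZ(x_1,y) + \dZ(x_2,y)$ is where the constant $r$ earns its keep: choosing arbitrary $(x_i',y_i') \in \sP$ for $i=1,2$, combine
\[
\dX(x_1,x_2) \leq \dX(x_1,x_1') + \dX(x_1',x_2') + \dX(x_2',x_2)
\]
with the bound $\dX(x_1',x_2') \leq \dY(y_1',y_2') + 2r$ coming from $\sup_{\sPP}\mXY \leq 2r$, and with $\dY(y_1',y_2') \leq \dY(y,y_1') + \dY(y,y_2')$; taking the infimum over $(x_i',y_i')$ closes the estimate. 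The remaining mixed case with one $x \in X$ and two $y_1,y_2 \in Y$ is symmetric.

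For the $1$-Lipschitz property of $\piZ$, the only nontrivial case is $x \in X$, $y \in Y$. For any $(x',y') \in \sP$, the triangle inequality in $B$ combined with the $1$-Lipschitz property of $\piX$ and $\piY$ and the bound $\sup_{\sP}\dXY \leq r$ yields
\[
d_B(\piX(x),\piY(y)) \leq \dX(x,x') + \dXY(x',y') + \dY(y',y) \leq \dX(x,x') + r + \dY(y,y'),
\]
and taking the infimum over $(x',y') \in \sP$ gives $d_B(\piZ(x),\piZ(y)) \leq \dZ(x,y)$.

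Finally, view $P$ as a probability measure on $Z \times Z$ supported on $X \times Y$ with marginals $\muX$ and $\muY$, which is a valid coupling in $Z$. For any $(x,y) \in \sP$, the choice $(x',y')=(x,y)$ in the infimum gives $\dZ(x,y) = r$, so $\dZ$ is $P$-essentially bounded by $r$ on $\sP$, whence $\dwi(\muX,\muY) \leq r$. The main obstacle is the long triangle inequality in the mixed case; this is the step that justifies why the constant $r$ must appear additively in the definition of $\dZ$, since the $2r$ buffer is precisely the slack needed to absorb the metric distortion $|\dX(x_1',x_2') - \dY(y_1',y_2')|$ along $\sP$ that the Gromov-Wasserstein cost controls.
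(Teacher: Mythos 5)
Your proposal is correct and follows essentially the same route as the paper: the mixed-case triangle inequality is closed using $\sup_{\sPP}\mXY \leq 2r$ together with the triangle inequalities in $X$ and $Y$ (your rearrangement is the same estimate as the paper's), the Lipschitz check is identical, and the Wasserstein bound is obtained by using $P$ itself as a coupling on $Z\times Z$, on whose support $\dZ \equiv r$. Your explicit treatment of the short-direction cases and the remark about the degenerate pseudo-metric situation when $r=0$ are minor additions the paper leaves implicit.
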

\begin{proof}
The function $\dZ$ is clearly symmetric. Let us show that it satisfies the triangle inequality. Let $x_1, x_2 \in X$ and $y \in Y$.
    \begin{equation}
    \begin{split}
        \dZ(x_1,y)+\dZ(y,x_2) &= 2r + \inf_{(x',y',x'',y'') \in \sPP} \dX(x_1,x')+\dY(y',y) + \dY(y,y'') + \dX(x_2,x'') \\
        & \geq 2r + \inf_{(x',y',x'',y'') \in \sPP} \dX(x_1,x_2)-\dX(x',x'')+\dY(y',y'') \\
        & \geq \dX(x_1,x_2) + \inf_{(x',y',x'',y'') \in \sPP} \mXY(x',y',x'',y'')-(\dX(x',x'')-\dY(y',y''))\\
        & \geq \dX(x_1,x_2).
    \end{split}
    \end{equation}
The remaining cases of the triangle inequality are either similar or straightforward. To show that $\piZ$ is $1$-Lipschitz, we only need to check the case $x \in X$ and $y \in Y$, for which we have
\begin{equation}
\begin{split}
            \dB(\piZ(x),\piZ(y))&=\dB(\piX(x),\piY(y)) \\
            &\leq \inf_{(x',y') \in \sP} \dB(\piX(x),\piX(x')) + \dB(\piX(x'),\piY(y')) + \dB(\piY(y'),\piY(y)) \\
            &\leq \inf_{(x',y') \in \sP} \dX(x,x') + r + \dY(y,')= \dZ(x,y).
\end{split}
\end{equation}
Viewing $\muX \sqcup \muY$ as a measure on $Z$, we have
\begin{equation}
            \dwi(X,Y) \leq \inf_{(x',y') \in \sP} \dZ(x',y')=r.
\end{equation}
This concludes the proof.
\end{proof}

%------------------

\bibliographystyle{siam}
\bibliography{realbib}

\end{document}